\newtheorem{teor}{Theorem}[section]
\newtheorem{lema}[teor]{Lemma}
\newtheorem{prop}[teor]{Proposition}
\newtheorem{fact}[teor]{Fact}
\newtheorem{defin}[teor]{Definition}
\newtheorem*{claim}{Claim}
\newtheorem{cor}[teor]{Corollary}
\newtheorem{remark}[teor]{Remark}
\newtheorem{problem}[teor]{Problem}
\newcommand{\bigslant}[2]{{\raisebox{.2em}{$#1$}\left/\raisebox{-.2em}{$#2$}\right.}}
\newcommand{\C}{{\mathfrak C}}
\newcommand{\R}{\mathbb{R}}
\DeclareMathOperator{\tp}{{tp}}
\DeclareMathOperator{\Th}{{Th}}
\DeclareMathOperator{\aut}{{Aut}}
\DeclareMathOperator{\med}{{med}}
\DeclareMathOperator{\EM}{{EM}}
\DeclareMathOperator{\Lin}{{Lin}}
\DeclareMathOperator\dcl{dcl}
\newcommand{\orcidlogo}{\includegraphics[height=\fontcharht\font`\B]{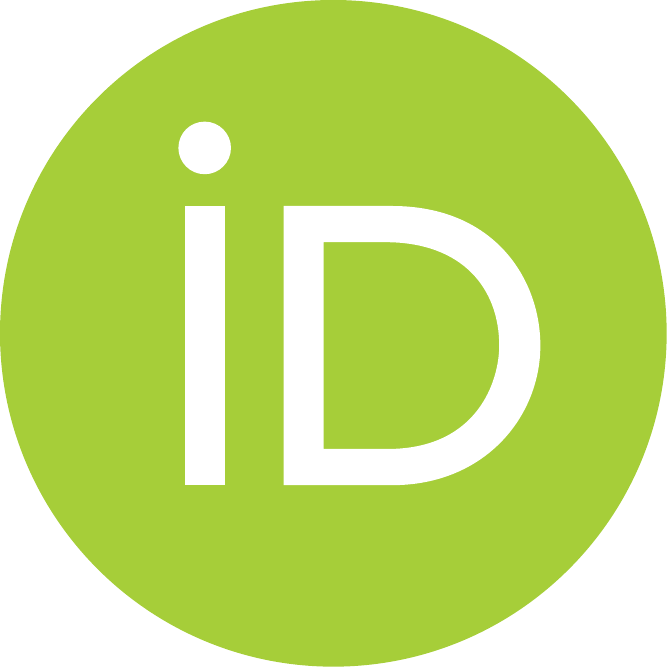}}
\newcommand{\orcid}[1]{\href{#1}{\orcidlogo #1}}
\title{On stable quotients}
\author{Krzysztof Krupi\'{n}ski}
\author{Adri\'{a}n Portillo}
\thanks{\noindent Both authors are supported by the Narodowe Centrum Nauki grant no. 2016/22/E/ST1/00450. The first author is also supported by  the Narodowe Centrum Nauki grant no. 2018/31/B/ST1/00357.}
\address{Instytut Matematyczny Uniwersytetu Wroc{\l}awskiego, pl. Grunwaldzki 2, 50-384 Wroc{\l}aw, Poland}
\address{Krzysztof Krupi\'{n}ski \orcid{https://orcid.org/0000-0002-2243-4411}}
\email{Krzysztof.Krupinski@math.uni.wroc.pl}
\address{Adri\'{a}n Portillo \orcid{https://orcid.org/0000-0001-9354-8574}}
\email{Adrian.Portillo-Fernandez@math.uni.wroc.pl}
\keywords{Stable quotient, hyperimaginary, distal theory, model-theoretic components.}
\subjclass[2020]{03C45}
\DeclareMathOperator*\dcap{{\stackinset{r}{-1ex}{c}{-1.9pt}{\downarrow}
		{\bigcap}\mkern2mu}}
\DeclareMathOperator*\acup{{\stackinset{r}{-1ex}{c}{1.9pt}{\uparrow}
		{\bigcup}\mkern2mu}}
\begin{document}
	\maketitle

	\begin{abstract}
We solve two problems from \cite{MR3796277} concerning maximal stable quotients of groups $\bigwedge$-definable in NIP theories. The first result says that if $G$ is a $\bigwedge$-definable group in a distal theory, then $G^{st}=G^{00}$ (where $G^{st}$ is the smallest $\bigwedge$-definable subgroup with $G/G^{st}$ stable, and $G^{00}$ is the smallest $\bigwedge$-definable subgroup of bounded index). In order to get it, we prove that distality is preserved under passing from $T$ to the hyperimaginary expansion $T^{heq}$. The second result is an example of a group $G$ definable in a non-distal, NIP theory for which $G=G^{00}$ but $G^{st}$ is not an intersection of definable groups. Our example is a saturated extension of $(\mathbb{R},+,[0,1])$. Moreover, we make some observations on the question whether there is such an example which is a group of finite exponent. We also take the opportunity and give several characterizations of stability of hyperdefinable sets, involving continuous logic.
	\end{abstract}

	\section{Introduction}
	
The core of model-theory is stability theory, developed in the 70's and 80's of the previous century. In the past three decades, one of the main goals of model theory has become finding extensions of stability theory to various unstable contexts, covering many mathematically interesting examples. One either tries to impose some general global assumptions on the theory in question (such as NIP or simplicity) or some local ones (e.g work with a stable definable set or generically stable type), and prove some structural results. 
It is also natural and ubiquitous in model theory to look at a ``global-local'' situation, namely quotients by $\bigwedge$-definable (i.e. type-definable) equivalence relations and assume (or prove) their good properties (e.g. boundedness) to get some further conclusions. Recall that a {\em hyperimaginary} is a class of a $\bigwedge$-definable equivalence relation, and a {\em hyperdefinable set} is a quotient of a $\bigwedge$-definable set by such a relation. While bounded quotients have played an important role in model theory and its applications (e.g. to approximate subgroups) for many years, stable quotients have not been studied thoroughly. They  appeared in \cite{MR3796277}. 
However, it is folklore that hyperimaginaries can be treated as imaginaries in continuous logic via a definable pseudometric (see \cite{2010,conant2021separation} and \cite[Chapter 3]{hanson2020thesis} in the language of continuous logic and \cite{10.2178/jsl/1122038916} in the language of CATs), so in a sense stability of hyperdefinable sets is equivalent to stability (of imaginary sorts) in continuous logic developed in \cite{MR2657678,MR2723787}. This is an additional motivation to consider stable quotients. So in Section \ref{section: characterizations of stability} we take the opportunity and give several characterizations of stability of hyperdefinable sets in various terms, involving continuous logic, including generically stable types which seem to be not considered (or even defined) so far in this context.

In the main parts of the paper, however, we will study stability of hyperdefinable sets (mostly groups)  without referring to continuous logic, just using the definition from \cite{MR3796277} which we recall below, or a characterization via bounds on the number of types observed in Section \ref{section: characterizations of stability}. 

Let $T$ be a complete theory, $\C \models T$ a monster (i.e. $\kappa$-saturated and strongly $\kappa$-homogeneous for a strong limit cardinal $> |T|$) model  in which we are working, and $A \subset \C$ a small set of parameters (i.e. $|A| < \kappa$); a cardinal $\gamma$ is {\em bounded} if  $\gamma < \kappa$.

		\begin{defin}
		A hyperdefinable (over $A$) set $X/E$ is \emph{stable} if for every $A$-indiscernible sequence $(a_i,b_i)_{i<\omega}$ with $a_i\in X/E$ for all (equivalently, some) $i<\omega$, we have  
		$$\tp(a_i,b_j/A) = \tp(a_j,b_i/A)$$ for all (some) $i\neq j < \omega$.
	\end{defin}

Let $G$ be a $\emptyset$-$\bigwedge$-definable group. There is always a smallest $A$-$\bigwedge$-definable subgroup of $G$ of bounded index, which is denoted by $G^{00}_A$. Under NIP, the group $G_A^{00}$ does not depend on the choice of $A$ (see \cite{MR2361885}) and is denoted by $G^{00}$. So $G^{00}$ is the smallest $\bigwedge$-definable (over parameters) subgroup of $G$ of bounded index, and it is in fact $\emptyset$-$\bigwedge$-definable and normal. 
Staying in the NIP context, $G^0$ is defined as the intersection of all relatively  definable subgroups of bounded index, and it turns out to be $\emptyset$-$\bigwedge$-definable and normal. Regarding stable quotients, since stability of hyperdefinable sets is closed under taking products and type-definable subsets (see \cite[Remark 1.4]{MR3796277}), it is clear that there always exists a smallest $A$-$\bigwedge$-definable subgroup $G^{st}_A$ such that the quotient $G/G^{st}_A$ is stable. The main result of \cite{MR3796277} says that under NIP, $G^{st}_A$ does not depend on $A$, and so it is the smallest $\bigwedge$-definable (over parameters) subgroup with stable quotient $G/G^{st}$, and it is in fact $\emptyset$-$\bigwedge$-definable and normal. 
Under NIP, there is also a $\emptyset$-$\bigwedge$-definable subgroup $G^{st,0}$ which is defined as the intersection of all relatively definable (with parameters) subgroups $H$ of $G$ such that $G/H$ is stable. It is interesting to study those  canonical ``components'' as well as quotients by them. To give a non-stable example, consider a monster model $K$ of ACVF, and $G:=(V,+)$, where $V$ is the valuation ring of $K$. Then $G^{st}=G^{st,0}$ is precisely the additive group of the maximal ideal of $V$, and $G/G^{st}$ is the additive group of the residue field.
 
In  \cite{MR3796277}, the authors suggested that it should be true that for groups definable in o-minimal theories, and, more generally, in distal theories (see Definition \ref{definition: distality}), $G^{st}=G^{00}$. This agrees with the intuition that distality should be thought of as something at the opposite pole from stability. As an illustration, consider the unit circle in the monster model of RCF: then $G^{st}=G^{00}$ is the group of infinitesimals and $G^0=G$.  In Section \ref{section: distal theories}, we prove this conjecture in the following more general form (see Corollary \ref{corollary: stable iff bounded}).

\begin{prop}\label{proposition: distal implies bdd}
If $T$ is distal, then every stable hyperdefinable set is bounded.
\end{prop}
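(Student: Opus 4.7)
The plan is to argue the contrapositive: assume $X/E$ is stable and show that it has boundedly many $E$-classes; equivalently, suppose for contradiction that $X/E$ is unbounded and derive a failure of distality.

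First, I extract a non-constant indiscernible sequence of hyperimaginaries in $X/E$. Since $X/E$ has unboundedly many classes, an Erd\H{o}s--Rado argument applied to real-tuple representatives yields an $A$-indiscernible sequence $(\alpha_i)_{i<\omega}$ of real tuples such that the classes $a_i:=[\alpha_i]_E$ are pairwise distinct elements of $X/E$.

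Next, I upgrade this sequence to a set-indiscernible one. The $A$-indiscernible sequence $(a_i,\alpha_i)_{i<\omega}$ satisfies the hypothesis of the stability condition for $X/E$, so $\tp(a_i,\alpha_j/A)=\tp(a_j,\alpha_i/A)$ for $i\neq j$. Iterating this exchange property in the standard stability-theoretic way (swapping adjacent pairs and propagating through $n$-tuples by induction), I conclude that the hyperimaginary sequence $(a_i)_{i<\omega}$ is indiscernible as a set over $A$, i.e.\ totally indiscernible.

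Finally, I derive the contradiction from distality. The key technical ingredient, which I would prove separately in this section, is that distality passes from $T$ to the hyperimaginary expansion $T^{heq}$. Granting this, in a distal theory no non-constant totally indiscernible sequence can exist, since such a sequence would produce a non-realized generically stable type in $T^{heq}$, contradicting distality. Applied to $(a_i)_{i<\omega}$, which has pairwise distinct terms, this is the desired contradiction.

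The main obstacle is the preservation of distality under the hyperimaginary expansion. The plan there is to present each hyperimaginary sort via its canonical $\bigwedge$-definable pseudometric and, given a would-be witness to non-distality in $T^{heq}$ (an indiscernible sequence indexed by hyperimaginary sorts together with a parameter that destroys indiscernibility after filling a single gap), lift it to a real indiscernible sequence in $T$ whose failure of gap-filling contradicts distality of $T$. Step 2 above also has to be executed in the hyperimaginary setting, but that amounts to a routine iteration of the stability exchange axiom once distinctness of the $a_i$ is encoded via the associated pseudometric.
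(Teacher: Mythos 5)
Your proposal follows essentially the same route as the paper's own proof (Corollary \ref{corollary: stable iff bounded}): extract a dense indiscernible sequence of pairwise distinct elements of $X/E$, use stability (the exchange property, i.e.\ the implication (1)$\Rightarrow$(4) of Theorem \ref{equivteor}) to see it is totally indiscernible, and contradict distality of $T^{heq}$, with the preservation of distality under passing to hyperimaginaries deferred to a separate result exactly as the paper does in Theorem \ref{teordistal}. The only deviation is your justification of the last step via non-realized generically stable types in $T^{heq}$ --- a fact that is not available off the shelf for hyperimaginaries and would itself require proof --- whereas the paper uses the direct observation that a non-constant totally indiscernible sequence is not distal (by total indiscernibility the same element fills two distinct Dedekind cuts, and inserting it at both destroys indiscernibility since the terms are pairwise distinct), which you could substitute to avoid this extra unproved ingredient.
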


This is deduced from the following result (see Theorem \ref{teordistal}). 

\begin{prop}\label{proposition: preservation of distality}
If $T$ is distal, then $T^{heq}$ is distal (by which we mean that all dense indiscernible sequences of hyperimaginaries are distal).
\end{prop}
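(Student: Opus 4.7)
The plan is to deduce distality of $T^{heq}$ from distality of $T$ by lifting any hyperimaginary indiscernible sequence to a real one, invoking $T$-distality on the real level, and projecting the conclusion back to hyperimaginaries. Recall Simon's definition: $T$ is distal iff for every real indiscernible sequence $(c_i)_{i\in I}$ (with $I$ dense without endpoints, $I=I_1+I_2$) over a real parameter set $\tilde A$, and every real $d$ such that $(c_i)_{I_1}\cdot d\cdot (c_i)_{I_2}$ is $\emptyset$-indiscernible, the inserted sequence is $\tilde A$-indiscernible; distality of $T^{heq}$ is the analogous statement with hyperimaginary sequences and hyperimaginary parameters.

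The central tool is a Lifting Lemma: given $(a_i)_{i\in I}$ hyperimaginary indiscernible over a hyperimaginary set $A$, there exist real representatives $c_i\in a_i$ and a real tuple $\tilde A\in A$ with $(c_i)_{i\in I}$ $T$-indiscernible over $\tilde A$. To prove it, choose arbitrary real lifts $c_i^0\in a_i$ and $\tilde A^0\in A$, extract by Ramsey a real indiscernible sequence $(c_i')$ over $\tilde A^0$, and observe that $([c_i']_E)_{i\in I}$ is hyperimaginary indiscernible over $A$ with the same $A$-EM-type as $(a_i)$: extraction preserves finitary $\tilde A^0$-types, and projecting them gives hyperimaginary $A$-types that coincide with those of $(a_i)$ by its indiscernibility. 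Homogeneity of $\C$ over $A$ then furnishes $\sigma\in\aut(\C/A)$ sending $([c_i']_E)$ to $(a_i)$; the images $c_i:=\sigma(c_i')$ and $\tilde A:=\sigma(\tilde A^0)$ satisfy the lemma.

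Granting the lemma, let $(a_i)_{i\in I}$ be $T^{heq}$-indiscernible over $A$ and $b$ a hyperimaginary such that $(a_i)_{I_1}\cdot b\cdot (a_i)_{I_2}$ is $T^{heq}$-indiscernible (over $\emptyset$). Extend both sequences to densely ordered index sets $I^+=I_1^++I_2^+$ of cardinality large enough for Erd\H{o}s--Rado, preserving the respective indiscernibilities. Apply the Lifting Lemma to the extended big sequence over $\emptyset$: obtain $(c_i^*)_{I_1^+}\cdot d\cdot (c_j^*)_{I_2^+}$ real-indiscernible over $\emptyset$ with $[c_i^*]_E=a_i$ and $[d]_F=b$. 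For any finite cut-respecting $S\subset I^+$, Erd\H{o}s--Rado yields a cut-preserving increasing $f\colon I\to I^+$ with $S\subset f(I)$ and $(c_{f(i)}^*)_{i\in I}$ real-indiscernible over some $\tilde A\in A$. Being a cut-respecting sub-sequence of the $\emptyset$-indiscernible big lift, $(c_{f(i)}^*)_{I_1}\cdot d\cdot (c_{f(j)}^*)_{I_2}$ is $\emptyset$-indiscernible; $T$-distality then upgrades this to $\tilde A$-indiscernibility, whose projection gives $(a_{f(i)})_{I_1}\cdot b\cdot (a_{f(j)})_{I_2}$ $T^{heq}$-indiscernible over $A$.

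For any two same-order-type straddling tuples $(\bar k,\bar l),(\bar k',\bar l')$ in $I$, choosing $S=\bar k\cup\bar l\cup\bar k'\cup\bar l'$ above yields $(a_{\bar k},b,a_{\bar l})\equiv_A(a_{\bar k'},b,a_{\bar l'})$, whence $(a_i)_{I_1}\cdot b\cdot (a_i)_{I_2}$ is $T^{heq}$-indiscernible over $A$, as required. The main obstacle is the Lifting Lemma, specifically aligning the projected hyperimaginary sequence with $(a_i)$ exactly: Ramsey extraction only preserves projections up to $A$-type, and a homogeneity argument tracking finitary hyperimaginary types is needed to secure the $A$-fixing automorphism. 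Once this is in place, the Erd\H{o}s--Rado sub-sequence step handles the remaining combinatorics cleanly, because over a long enough extension one can simultaneously prescribe any finite piece of the sub-sequence while still enforcing real indiscernibility over a real representative of $A$.
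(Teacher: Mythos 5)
Your plan (lift, use distality of $T$ on the real level, project back) is the right general shape, but the step that carries all the weight is unsound. In your main argument you first fix a single real lift $(c_i^*)_{i\in I^+}$ of the extended sequence together with a lift $d$ of $b$ so that $(c_i^*)_{I_1^+}+d+(c_i^*)_{I_2^+}$ is $\emptyset$-indiscernible, and then claim that ``Erd\H{o}s--Rado yields'' an actual cut-preserving subsequence $(c^*_{f(i)})_{i\in I}$, through any prescribed finite $S$, which is indiscernible over some real representatives $\tilde A$ of $A$. Erd\H{o}s--Rado does not do this: it gives homogeneous sets for colourings of $n$-tuples for a \emph{fixed} $n$, and the standard model-theoretic extraction combines it with compactness to produce a \emph{new} sequence realizing an EM-type, not an infinite (let alone densely ordered) indiscernible \emph{subsequence} of a given sequence over a parameter set; the latter is a large-cardinal-strength partition property. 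And you cannot fall back on extracting a new sequence, because your argument needs the elements to be the very representatives of the original classes $a_{f(i)}$ and needs $d$ to remain a representative of $b$ filling the cut. This simultaneous alignment --- a real lift of the sequence indiscernible over representatives of $A$ \emph{and} compatible with a real lift of the inserted hyperimaginary --- is exactly the crux of the problem; the paper handles it with Lemma \ref{2.8'} (a hyperimaginary version of Simon's Lemma 2.8, proved by an NIP alternation argument) followed by a compactness claim producing representatives of the fillers that fill the \emph{real} cuts, and nothing in your proposal plays that role. Tellingly, your argument nowhere uses NIP, whereas the genuine difficulty sits precisely there; the Lifting Lemma you single out as the main obstacle is, by contrast, essentially the folklore fact the paper quotes in one line (an indiscernible sequence of hyperimaginaries has an indiscernible sequence of representatives), and its over-$A$ version can indeed be proved by the compactness/homogeneity argument you sketch.

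Two further, smaller gaps. First, the statement you actually aim at is the base-change property for hyperimaginary sequences over hyperimaginary bases, but the paper's definition of ``$T^{heq}$ is distal'' is the two-cut property (Definition \ref{definition: distality}) for every dense indiscernible sequence of hyperimaginaries; these are equivalent, but the equivalence is a theorem (on the real side it is part of Simon's analysis), not a definition, and the direction you would need (base change for all hyperimaginary sequences implies the two-cut property) has to be argued --- it can be done by applying base change to the middle-plus-right part of the sequence over the left part together with the first filler, but you do not do this. Second, the extension of $(a_i)_{i\in I}$ to $I^+$ preserving \emph{simultaneously} $A$-indiscernibility of the sequence and $\emptyset$-indiscernibility of the sequence with $b$ inserted is asserted without proof. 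These are repairable, but the Erd\H{o}s--Rado subsequence step is not: as written the proof has no substitute for the mechanism of Lemma \ref{2.8'} and Theorem \ref{teordistal}.
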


We prove the above proposition by elaborating on some arguments from \cite{MR3001548}. 

By Hrushovski's theorem (i.e. \cite[Ch. 1, Lemma 6.18]{pillay1996geometric}), we know that a $\bigwedge$-definable group in a stable theory is an intersection of definable groups. 
%
However, although $G/G^{st}$ is stable, it may happen that $G^{st}$ is not an intersection of relatively definable subgroups of $G$,  e.g. in the above example with the unite circle, $G^{st}=G^{00}$ is not an intersection of definable groups. 
In \cite{MR3796277}, the authors stated as a problem to find an example of a definable group $G$ where $G^{00}=G$ but $G^{st} \ne G^{st,0}$ (i.e. $G^{st}$ is not an intersection of definable groups). In Section \ref{section: main example}, we give such an example: it is  the monster model of $\Th((\mathbb{R},+,[0,1])$. It is not clear to us, however, how to find an example of a torsion (equivalently, finite exponent) group $G$ with those properties, or just satisfying $G^{00}\neq G^{st}\neq G^{st,0}$. In Section \ref{section 5}, we make some observations on this problem, 
describing what should be constructed in order to find such an example. Dropping the requirement that $G$ is a torsion group, we give a  large class of examples where $G^0\neq G^{00}\neq G^{st}\neq G^{st,0}$; this does not include an example of finite exponent, as $G$ being of  finite exponent implies that  $G^0 =G^{00}$ by general topological reasons (i.e. compact torsion groups are profinite).

	\section{Characterizations of stability of hyperdefinable sets}\label{section: characterizations of stability}


Let $T$ be a complete, first order theory, and $\C \models T$ a monster model. Let $E$ be a $\emptyset$-$\bigwedge$-definable equivalence relation on a $\emptyset$-$\bigwedge$-definable subset $X$ of $\C^\lambda$ (or a product of sorts), where $\lambda< \kappa$ (from the definition of $\C$). Recall that $E$ is said to be {\em bounded} if $|X/E| < \kappa$.

	In this section, we give some characterizations of stability of the hyperdefinable set $X/E$, analogous to classical characterizations of stability of a first order theory. This involves continuous logic (CL). Assuming NIP, we also give a characterization using generically stable types, which we introduce in the context of $X/E$ (which in fact could be also done in a general CL context). 
	
Since finite models are trivially stable, we will assume that $T$ has infinite models.

It is folklore that $E$ yields a pseudometric (or a set of pseudometrics) on $X$ (see \cite{2010,conant2021separation} and \cite[Chapter 3]{hanson2020thesis} in the language of continuous logic and \cite{10.2178/jsl/1122038916} in the language of CATs), which in turn leads to a presentation of $X/E$ as a type-definable set of imaginaries in the sense of continuous logic.
 Note that in this translation hyperdefinable sets do not translate to continuous logic definable sets. However, for our purposes, it is more convenient to look at the connection with continuous logic in a different way.

We will focus on the first order theory $T$ and treat it as a continuous logic theory, as the aim of this paper is talk about $X/E$ rather than develop continuous logic in general. We will be using some results from \cite{MR2657678} 
but also the formalism from \cite[Subsection 3.1]{hrushovski2021amenability} and \cite[Section 3]{hrushovski2021order}. In particular, by a {\em CL-formula over $A$} we mean a continuous function $\varphi \colon S_n(A)\to \mathbb{R}$.  If $\varphi$ is such a CL-formula, then for any $\bar b\in M^n$ (where $M \models T$) by $\varphi(\bar b)$ we mean $\varphi(\tp(\bar b/A))$; note that the range of every CL-formula is compact. So a CL-formula can be thought of as a function from $\C^n$ to $\mathbb{R}$ which factors through $S_n(A)$ via a continuous map $S_n(A) \to \mathbb{R}$.
 What are called {\em definable predicates}, in finitely many variables and without parameters,  in \cite{MR2657678} are precisely CL-formulas over $\emptyset$, but where the range is contained in $[0,1]$. In any case, a CL-formula can be added as a new CL-predicate and then it becomes a legitimate formula in the sense of continuous logic. It is not so if we allow the domain of a CL-formula to be an infinite Cartesian power of $\C$ (which is necessary to deal with $X/E$ in the case when $\lambda$ is infinite), but still the results from \cite{MR2657678} 
 which we will be using are valid for such generalized continuous logic formulas.

Let $M$ be a model, and $\varphi(x,y)$ a CL-formula over $M$. Let $a\in \C^{|x|}$. 
Then $\tp_{\varphi}(a/M)$ is the function taking $b\in M^{|y|}$ (or $\varphi(x,b)$) to $\varphi(a,b)$, and is called a {\em complete $\varphi(x,y)$-type over $M$}. The space of all complete $\varphi$-types over $M$ is denoted by $S_\varphi(M)$ (it is naturally a quotient of $S(M)$, and the topology on $S_\varphi(M)$ is the quotient topology).
The type $\tp_{\varphi}(a/M)$ is {\em definable} if it is the restriction to $M^{|y|}$ of a CL-formula $\psi(y)$ over $M$, i.e. $\varphi(a,b)=\psi(b)$ for $b \in M^{|y|}$.


	


From now on, let $\mathcal{F}_{X/E}$ be the family of all functions $f : X \times \mathfrak{C}^m \to \mathbb{R}$ which factor through $X/E\times\mathfrak{C}^m$ and can be extended to a CL-formula $\C^{\lambda} \times \C^m \to \mathbb{R}$ over $\emptyset$, where $m$ ranges over $\omega$. 
(Note that, by Tietze extension theorem, a function $f : X \times \C^m \to \mathbb{R}$ extends to a CL-formula over $\emptyset$ iff it factors through the type space $S_{X \times \C^m}(\emptyset)$ via a continuous function $S_{X \times \C^m}(\emptyset) \to \mathbb{R}$.)
For $f\in \mathcal{F}_{X/E}$, a {\em complete $f$-type over $M$} is the function taking  $f(x,b)$ (for $b\in M^{|y|}$) to $f(a,b)$ for some fixed $a \in X$, and is denoted by $\tp_f(a/M)$. We get the space $S_f(M)$ of all complete $f$-types over $M$. A {\em complete $\mathcal{F}_{X/E}$-type over $M$} is 
the union $\bigcup_{f \in \mathcal{F}_{X/E}} \tp_f(a/M)$ for some $a \in X$, and  $S_{\mathcal{F}_{X/E}}(M)$ is the space of all complete  $\mathcal{F}_{X/E}$-types over $M$. The definition of $\tp_f(a/M)$ being definable is the same as in the previous paragraph; a type in $S_{\mathcal{F}_{X/E}}(M)$ is {\em definable} if its restriction to any $f \in \mathcal{F}_{X/E}$ is  definable.

Let $A \subset \C$ (be small). Recall that the complete types over $A$ of elements of $X/E$ can be defined as the $\aut(\C/A)$-orbits on $X/E$,  or the preimages of these orbits under the quotient map, or the partial types defining these preimages, or the classes of the equivalence relation on $S_X(A)$ given in the proof of Remark \ref{2.7}. The space of all such types is denoted by $S_{X/E}(A)$.
	
	\begin{prop}\label{2.3}
		For any $a_1=a'_1/E$, $a_2=a'_2/E$ in $X/E$ and $b_1,b_2\in \mathfrak{C}^m$ 
		$$\tp(a_1,b_1)\neq \tp(a_2,b_2)\iff (\exists f\in \mathcal{F}_{X/E})(f(a'_1,b_1)\neq f(a'_2,b_2)) $$
	\end{prop}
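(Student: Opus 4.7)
The plan is to prove the two directions separately, with the easy direction following from invariance considerations and the harder one from a topological argument on the $\emptyset$-type space.

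For the direction $(\Leftarrow)$, I would argue by contrapositive. If $\tp(a_1,b_1) = \tp(a_2,b_2)$, then by the standard description of types of hyperimaginaries there is $\sigma \in \aut(\C)$ with $\sigma(a'_1)\,E\,a'_2$ and $\sigma(b_1) = b_2$. For any $f \in \mathcal{F}_{X/E}$, its extension to a CL-formula over $\emptyset$ is $\aut(\C)$-invariant, so $f(a'_1,b_1) = f(\sigma(a'_1),b_2)$, and since $f$ factors through $X/E$ in the first coordinate this equals $f(a'_2,b_2)$.

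For $(\Rightarrow)$, I would work with the compact Hausdorff space $S := S_{X \times \C^m}(\emptyset)$ and introduce the relation
\[
p\,R\,q \iff p(x,y) \cup q(x',y) \cup E(x,x') \text{ is consistent.}
\]
The first step is to check that $R$ is an equivalence relation; the only nontrivial point is transitivity, where, given witnesses for $p\,R\,q$ and $q\,R\,r$, one applies an automorphism matching the two $q$-realisations to merge the two $E$-links into a single chain connecting a realisation of $p$ to one of $r$. The second step is to verify $R$ is closed in $S \times S$: it is the image of the closed subset $\{r \in S_{X \times X \times \C^m}(\emptyset) : r \vdash E(x,x')\}$ under the continuous map to $S \times S$ sending $\tp(a,a',b) \mapsto (\tp(a,b),\tp(a',b))$, and continuous maps from compact spaces to Hausdorff spaces send closed sets to closed sets. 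The third step is to identify the $R$-classes with hyperimaginary-real types over $\emptyset$: unravelling the consistency criterion produces $\sigma \in \aut(\C)$ witnessing $\sigma(a'_1)\,E\,a'_2$ and $\sigma(b_1) = b_2$, which by definition means $\tp(a_1,b_1) = \tp(a_2,b_2)$.

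With this in hand, I would finish as follows. Since $R$ is a closed equivalence relation on the compact Hausdorff space $S$, the quotient $S/R$ is compact Hausdorff. If $\tp(a_1,b_1) \neq \tp(a_2,b_2)$, then the images of $\tp(a'_1,b_1)$ and $\tp(a'_2,b_2)$ in $S/R$ are distinct, so by Urysohn's lemma there is a continuous $g : S/R \to \R$ separating them. Composing $g$ with the quotient map $S \to S/R$ and with the type map $X \times \C^m \to S$ produces a function $f : X \times \C^m \to \R$ which factors through $S$ via a continuous map, hence extends to a CL-formula over $\emptyset$ by the Tietze-based remark preceding the proposition. Since $R$ identifies $(a',b)$ and $(a'',b)$ whenever $a'\,E\,a''$, the function $f$ also factors through $X/E \times \C^m$, so $f \in \mathcal{F}_{X/E}$, and $f(a'_1,b_1) \neq f(a'_2,b_2)$ by construction.

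The main obstacle is verifying that $R$ is a closed equivalence relation: closedness relies on the projection-from-a-compact-space observation, and identifying $R$-classes with hyperimaginary-real types requires the short automorphism-chasing argument above. Once $S/R$ is known to be compact Hausdorff, the rest (Urysohn and continuity of the composite) is routine.
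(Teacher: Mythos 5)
Your proof is correct and follows essentially the same route as the paper: both realize the types of pairs $(x/E,y)$ as points of a compact Hausdorff quotient --- the paper uses the bounded hyperdefinable set $(X\times\C^m)/E'$ (where $(x_1,y_1)E'(x_2,y_2)$ iff $(x_1/E,y_1)\equiv(x_2/E,y_2)$) with the logic topology, while you use $S_{X\times\C^m}(\emptyset)$ modulo your closed equivalence relation $R$, which is canonically the same space --- and then separate the two distinct points by a continuous real-valued function and pull it back to obtain $f\in\mathcal{F}_{X/E}$. Your $(\Leftarrow)$ direction via automorphism-invariance of CL-formulas over $\emptyset$ is a harmless variant of the paper's observation that the fibres of $f$ are $\emptyset$-$\bigwedge$-definable unions of $(E\times\{=\})$-classes.
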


\begin{proof}
Let us define an equivalence relation $E'$ on $X\times \mathfrak{C}^{m}$ by $$ (x_1,y_1) {E'} (x_2,y_2)\iff (x_1/E,y_1)\equiv(x_2/E,y_2). $$
Note that $E'$ is a $\emptyset$-$\bigwedge$-definable, bounded equivalence relation.
	
($\Leftarrow$) Assume $r_1:=f(a_1',b_1)\neq f(a'_2,b_2)=:r_2$ for some $f\in \mathcal{F}_{X/E}$. Since the sets $f^{-1}(r_1)$ and $f^{-1}(r_2)$ are  $\emptyset$-$\bigwedge$-definable and they are unions of ($E \times \{=\}$)-classes, they are unions of $E'$-classes. But they are also disjoint. Hence, $ (a_1',b_1)$ is not $E'$-related to $(a_2',b_2)$, i.e. $\tp(a_1,b_1) \ne \tp(a_2,b_2)$. 

($\Rightarrow$)  Since $E'$ is $\emptyset$-$\bigwedge$-definable and bounded, $\bigslant{(X\times \mathfrak{C}^{m})}{E'}$ 
is a compact  (Hausdorff) topological space (with the {\em logic topology}, in which closed sets are those whose preimages by the quotient map are $\bigwedge$-definable). Since we assume that $\tp(a_1,b_1) \ne \tp(a_2,b_2)$, we have $[(a_1',b_1)]_{E'} \ne [(a_2',b_2)]_{E'}$ in $\bigslant{(X\times \mathfrak{C}^{m})}{E'}$. The space $\bigslant{(X\times \mathfrak{C}^{m})}{E'}$ is $T_{3+\frac{1}{2}}$, so the above two distinct points can be separated by a continuous function $$ h:\bigslant{(X\times \mathfrak{C}^{m})}{E'}\to \mathbb{R} $$ such that $h([(a_1',b_1)]_{E'})=0$ and $h([(a_2',b_2)]_{E'})=1$. Let $\pi_{E'} : X \times \C^m \to  \bigslant{(X\times \mathfrak{C}^{m})}{E'}$ be the quotient map. We conclude that the function $$f:=h\circ \pi_{E'}: X\times \mathfrak{C}^{m} \to \mathbb{R}  $$ satisfies the required conditions.
\end{proof}

We say that $f \in \mathcal{F}_{X/E}$ is {\em stable} if for all $\varepsilon > 0$ there do not exist $a_{i}, b_{i}$ for $i<\omega$ with $a_i \in X$ for each $i$, such that for all $i<j$, $|f(a_{i},b_{j}) - f(a_{j},b_{i})| \geq \varepsilon$ (see \cite[Definition 7.1]{MR2657678} and \cite[Definition 3.8]{hrushovski2021amenability}). By Ramsey's theorem and compactness, $f$ is stable iff whenever $(a_{i},b_{i})_{i<\omega}$ is indiscernible (with $a_i \in X$), then $f(a_{i},b_{j}) = f(a_{j},b_{i})$ for all (some) $i< j$. 
	
	\begin{cor}\label{2.4}
		$X/E$ is stable as a hyperdefinable set if and only if every  $f\in \mathcal{F}_{X/E}$ is stable.
	\end{cor}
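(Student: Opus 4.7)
The plan is to derive both directions directly from Proposition \ref{2.3}, which asserts that $\mathcal{F}_{X/E}$ separates types in $X/E \times \C^m$. The two stability conditions are formally parallel (by the Ramsey reformulation of stability of $f$ recalled just above the corollary), so the proof should be a clean translation that exploits the fact that every $f \in \mathcal{F}_{X/E}$ factors through $X/E \times \C^m$.

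For the forward implication, I would fix $f \in \mathcal{F}_{X/E}$ and an indiscernible sequence $(a_i, b_i)_{i<\omega}$ in $\C$ with $a_i \in X$. The sequence $(a_i/E, b_i)_{i<\omega}$ is then indiscernible in $X/E \times \C^m$, since types in the quotient are determined by types in $\C$. The hypothesis yields $\tp(a_i/E, b_j) = \tp(a_j/E, b_i)$ for some $i < j$, and Proposition \ref{2.3} forces $f(a_i, b_j) = f(a_j, b_i)$, proving stability of $f$.

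For the converse, I would assume $X/E$ is not stable, witnessed by an indiscernible sequence $(a_i, b_i)_{i<\omega}$ in $X/E \times \C^m$ with $\tp(a_0, b_1) \neq \tp(a_1, b_0)$. Picking representatives $a_i' \in X$ of each $a_i$, Proposition \ref{2.3} provides some $f \in \mathcal{F}_{X/E}$ with $f(a_0', b_1) \neq f(a_1', b_0)$. Because $f$ factors through $X/E \times \C^m$ and the sequence is indiscernible there, the values $f(a_i', b_j)$ for $i < j$ are all equal to some constant $r$, and the values $f(a_j', b_i)$ to some constant $s \neq r$. Setting $\varepsilon := |r - s| > 0$, the sequence $(a_i', b_i)_{i<\omega}$ witnesses instability of $f$.

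The only potentially delicate point is that the indiscernible sequence in $X/E \times \C^m$ need not lift to an indiscernible sequence in $\C$ under the chosen representatives; but this lifting is not required, since instability of $f$ demands merely some sequence with a uniform gap, and the off-diagonal values of $f$ are automatically constant by indiscernibility in the quotient combined with the fact that $f$ factors through types over $\emptyset$. I therefore foresee no serious obstacle.
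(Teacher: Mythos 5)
Your proof is correct, and its overall shape is the paper's: both directions rest entirely on Proposition \ref{2.3} together with the Ramsey-type reformulation of stability of $f$ stated just before the corollary (your forward direction is literally the contrapositive of the paper's). The one genuine divergence is in the converse. The paper, given an indiscernible sequence $(a_i/E,b_i)_{i<\omega}$ witnessing instability of $X/E$, first extracts (by Ramsey and compactness) an indiscernible sequence of representatives and only then applies Proposition \ref{2.3}, concluding instability of $f$ via the indiscernible-sequence characterization; this extraction step is mildly delicate, since one must see that the witnessing inequality of types survives the passage to the extracted sequence. You instead keep arbitrary representatives $a_i'$, invoke Proposition \ref{2.3} once to produce $f$ with $f(a_0',b_1)\neq f(a_1',b_0)$, and then use the easy direction of Proposition \ref{2.3} (equivalently, that $f$ is both $E$-invariant in the first coordinate and invariant under automorphisms, so its value depends only on $\tp(a_i/E,b_j)$) to see that the off-diagonal values are two constants $r\neq s$; since the definition of instability of $f$ only asks for \emph{some} sequence with a uniform gap $\varepsilon=|r-s|$, no lifting of indiscernibility to representatives is needed. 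What your route buys is the elimination of the extraction step (and of the need to justify that it preserves the witness); what the paper's route buys is an actual indiscernible witnessing sequence in the home sort, which matches the indiscernible-sequence form of stability of $f$ and the conventions used elsewhere in the section. Both are short and correct.
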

	\begin{proof}
		
		
		($\Rightarrow$) Suppose that there is an unstable $f\in \mathcal{F}_{X/E}$. Then there is an indiscernible sequence $(a_i,b_i)_{i<\omega}$ with $a_i \in X$ such that $$f(a_i,b_j)\neq f(a_j,b_i)$$ for all $i<j$.
		Hence, by Proposition \ref{2.3}, $\tp(a_i/E,b_j)\neq \tp(a_j/E,b_i)$ for all $i<j$. Since the sequence $(a_i/E,b_i)_{i<\omega}$ is indiscernible, we conclude that $X/E$ is not stable.
		
		($\Leftarrow$) Suppose that $X/E$ is not stable. Then, there is an indiscernible sequence $(a_i/E,b_i)_{i<\omega}$ with $a_i \in X$ such that $$\tp(a_i/E,b_j)\neq \tp(a_j/E,b_i).$$
		for all $i<j$.
		By Ramsey's theorem and compactness, we can assume that the sequence  $(a_i,b_i)_{i<\omega}$ is indiscernible.
		
		By Proposition \ref{2.3}, we conclude that there is $f\in \mathcal{F}_{X/E}$ such that $f(a_i,b_j)\neq f(a_j,b_i)$ for all $i<j$. Hence, $f$ is not stable.
	\end{proof}
	
The next result follows from \cite[Proposition 7.7]{MR2657678} and its proof. However, one should be a bit careful here. In the case when $\lambda$ is finite and $X=\C^\lambda$, one just applies \cite[Proposition 7.7]{MR2657678}, but in general one should say that the proof of \cite[Proposition 7.7]{MR2657678} goes through working with $f \in \mathcal{F}_{X/E}$ in place of a legitimate continuous logic formula $\varphi$. Also, since we are working in the first order theory $T$ treated as a continuous logic theory, models are discrete spaces and the density characters 
of models are just cardinalities. 
The density character of $S_f(M)$ (denoted by $||S_f(M)||$) is computed with respect to a certain metric on $S_f(M)$ defined after Definition 6.1 in \cite{MR2657678}.

\begin{fact}[\cite{MR2657678}, Proposition 7.7]\label{equiv1}
	Let $f\in \mathcal{F}_{X/E}$. The following conditions are equivalent:
	\begin{enumerate}
		\item $f$ is stable.	
		\item For every $M\models T$, every $p\in S_f(M)$ is definable.
		\item For every $M\models T$, $||S_f(M)||\leq | M |$.
		\item For every $M\models T$, $|S_f(M)|\leq | M |^{\aleph_0}$.
		\item There is $\mu \geq |T|$ such that when $M\models T$ and $|M| \leq \mu$, then $||S_f(M)|| \leq \mu$.
		\item For every $\mu =\mu^{\aleph_0} \geq |T|$, when $M\models T$ and $|M| \leq \mu$, then $|S_f(M)| \leq \mu$.
	\end{enumerate}
\end{fact}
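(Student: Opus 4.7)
The plan is to run the cycle $(1) \Rightarrow (2) \Rightarrow (3) \Rightarrow (4) \Rightarrow (6) \Rightarrow (5) \Rightarrow (1)$, adapting the classical Shelah-style equivalences for stable formulas to CL-formulas $f \in \mathcal{F}_{X/E}$. Four of the six implications are essentially formal once the natural metric on $S_f(M)$ is in place: $(3) \Rightarrow (4)$ is the metric-space fact that a space of density character at most $\kappa$ has cardinality at most $\kappa^{\aleph_0}$ (each point is a limit of a sequence from a fixed dense set); $(4) \Rightarrow (6)$ is the cardinal arithmetic $|M|^{\aleph_0} \leq \mu^{\aleph_0} = \mu$; $(6) \Rightarrow (5)$ is immediate since the density character of any topological space is dominated by its cardinality; and $(2) \Rightarrow (3)$ is the observation that a definable $p \in S_f(M)$ is recovered from its definition, so the $f$-types with definitions coming from CL-formulas over finite tuples from $M$ form a dense subset of $S_f(M)$ of density character at most $|M|$ (using that $C(S_n(M))$ has density character bounded by the weight of $S_n(M)$, which is $\leq |M|$).

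The conceptual heart is $(1) \Rightarrow (2)$. Given a stable $f$ and $p \in S_f(M)$ realized by some $a \in X$, the plan is to extract, via Ramsey and compactness, an $M$-indiscernible sequence $(a_i)_{i < \omega}$ inside $\tp_f(a/M)$ and set
\[
\psi(b) := \lim_{i \to \infty} f(a_i, b)
\]
for $b \in M^{|y|}$. Stability of $f$, in the Ramsey reformulation recorded just before Corollary \ref{2.4}, guarantees that this limit exists, stabilizes cofinitely on every fixed $b$, and agrees with $f(a,b)$ at the level of $\tp_f$. The delicate point is to upgrade $\psi$ from a pointwise tail-limit on $M^{|y|}$ to an honest CL-formula over $M$, i.e.\ to show that $\psi$ factors continuously through $S_{|y|}(M)$. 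This is exactly the content of the proof of \cite[Proposition 7.7]{MR2657678}, which may be carried out verbatim with $f$ in place of a legitimate continuous-logic formula because every $f \in \mathcal{F}_{X/E}$ extends to a CL-formula $\C^\lambda \times \C^m \to \mathbb{R}$ over $\emptyset$, as noted in the paragraph defining $\mathcal{F}_{X/E}$.

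For the remaining implication $(5) \Rightarrow (1)$, I would argue contrapositively. Assume $f$ is unstable and fix $\varepsilon > 0$ witnessing this. A standard binary-tree construction, with the $\varepsilon$-order for $f$ playing the role of the formula order in Shelah's classical proof, produces, over a suitable model $M$ of size at most $\mu$, a family of $f$-types pairwise at sup-metric distance at least $\varepsilon/2$ whose cardinality exceeds $\mu$; this forces $\|S_f(M)\| > \mu$ and contradicts (5). The main obstacle I foresee lies in $(1) \Rightarrow (2)$: one must check that the tail-limit $\psi$ is not merely a bounded pointwise limit of CL-formulas but is in fact uniformly approximable by CL-formulas over $M$, which is the continuous-logic analogue of definability of types and the one place where working with $\mathcal{F}_{X/E}$ rather than with discrete formulas requires genuine care.
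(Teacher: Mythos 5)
This Fact is not proved in the paper: it is quoted from \cite[Proposition 7.7]{MR2657678}, the only added content being the remark that the proof there goes through verbatim with $f\in\mathcal{F}_{X/E}$ in place of a legitimate continuous-logic formula, and that, since models of the first-order $T$ viewed in continuous logic are discrete, density character equals cardinality. Your proposal reconstructs the standard cycle behind that cited proposition and, at the one substantive step $(1)\Rightarrow(2)$, defers to exactly the same citation, so it is essentially the paper's approach; just note that for an $M$-indiscernible sequence of realizations of $\tp_f(a/M)$ your tail-limit is trivially $f(a,b)$ for $b\in M^{|y|}$, and the genuine content (obtained in \cite{MR2657678} via medians along a Morley sequence of a suitable invariant extension, in the spirit of Proposition \ref{2.21}) is that this function is a CL-formula over $M$, while in $(2)\Rightarrow(3)$ the density bound should be extracted from the closed algebra generated by the instances $f(a,y)$, $a\in M$ (giving $|M|+\aleph_0$), rather than from all of $C(S_n(M))$, which only gives $|M|+|T|$.
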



\begin{cor}\label{corollary: 2.4}
	The following conditions are equivalent:
	\begin{enumerate}
		\item $\forall f \in \mathcal{F}_{X/E}$ $f$ is stable.
		\item $\forall M\models T$ $\forall f\in \mathcal{F}_{X/E}$ $\forall p\in S_f(M)$ $p$ is definable.
		\item $\exists \mu\geq \lvert T \rvert$ s.t. $\forall M\models T$ if  $M\models T$ and $|M| \leq \mu$, then $| S_{\mathcal{F}_{X/E}}(M)| \leq \mu$.
		\item $\forall \mu = \mu ^{|T|+\lambda} \geq \lvert T \rvert$ $\forall M\models T$ if  $M\models T$ and $|M| \leq \mu$, then $| S_{\mathcal{F}_{X/E}}(M)| \leq \mu$.
	\end{enumerate}
\end{cor}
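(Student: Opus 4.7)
The plan is to prove $(1)\Leftrightarrow(2)$ together with the cycle $(1)\Rightarrow(4)\Rightarrow(3)\Rightarrow(1)$. The first of these is immediate by applying $(1)\Leftrightarrow(2)$ of Fact~\ref{equiv1} to each $f\in\mathcal{F}_{X/E}$ separately. The implication $(4)\Rightarrow(3)$ is trivial: one can take $\mu=2^{|T|+\lambda}$, which satisfies $\mu=\mu^{|T|+\lambda}\ge|T|$. For $(3)\Rightarrow(1)$, fix the witness $\mu$ provided by $(3)$, any $f\in\mathcal{F}_{X/E}$, and $M\models T$ with $|M|\le\mu$. The natural restriction map $S_{\mathcal{F}_{X/E}}(M)\to S_f(M)$ is surjective (every complete $f$-type comes from some realization $a\in X$, and $a$ also realizes a full $\mathcal{F}_{X/E}$-type), hence $\|S_f(M)\|\le|S_f(M)|\le|S_{\mathcal{F}_{X/E}}(M)|\le\mu$, and stability of $f$ follows from condition $(5)$ of Fact~\ref{equiv1}.

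The main step is $(1)\Rightarrow(4)$. Fix $\mu$ with $\mu=\mu^{|T|+\lambda}\ge|T|$ and $M\models T$ with $|M|\le\mu$. Since $\aleph_0\le|T|+\lambda$ we also have $\mu^{\aleph_0}=\mu$, so condition $(6)$ of Fact~\ref{equiv1}, applied to each (stable) $f\in\mathcal{F}_{X/E}$, yields $|S_f(M)|\le\mu$. I would then upgrade this pointwise bound by exhibiting a subfamily $\mathcal{F}_0\subseteq\mathcal{F}_{X/E}$ of cardinality at most $|T|+\lambda$ which is dense in the uniform (sup) norm. Granting this, observe that if $p,q\in S_{\mathcal{F}_{X/E}}(M)$ agree on every $f\in\mathcal{F}_0$ then, because for any fixed realization $a\in X$ and $\bar b\in M^{<\omega}$ the evaluation $f\mapsto f(a,\bar b)$ is $1$-Lipschitz in the sup norm, $p$ and $q$ agree on all of $\mathcal{F}_{X/E}$ (taking the approximating sequence within the same fixed arity $m$). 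Hence the restriction map $S_{\mathcal{F}_{X/E}}(M)\to\prod_{f\in\mathcal{F}_0}S_f(M)$ is injective, and so $|S_{\mathcal{F}_{X/E}}(M)|\le\mu^{|T|+\lambda}=\mu$, as required.

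The main obstacle is producing the dense subfamily $\mathcal{F}_0$. By the Tietze-type remark recorded just before the definition of $\mathcal{F}_{X/E}$, the functions in $\mathcal{F}_{X/E}$ of a fixed arity $m$ correspond exactly to continuous $\R$-valued functions on the quotient type space $S_{X/E\times\C^m}(\emptyset)$, which, being a quotient by a closed equivalence relation of a closed subspace of $S_{\lambda+m}(\emptyset)$, is a compact Hausdorff space of weight at most $|T|+\lambda$. A standard Stone--Weierstrass argument then gives a sup-norm dense subalgebra of $C(S_{X/E\times\C^m}(\emptyset),\R)$ of cardinality at most $|T|+\lambda$, and the union over $m<\omega$ is the desired $\mathcal{F}_0$. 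The subtlety to check carefully is this identification of $\mathcal{F}_{X/E}$ with the \emph{full} space of continuous functions on $S_{X/E\times\C^m}(\emptyset)$, together with the minor point that we are working with the ``generalized'' CL-formulas over the infinite product of sorts $\C^\lambda$ mentioned in the text, rather than with legitimate finitary continuous-logic formulas.
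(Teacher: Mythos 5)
Your proposal is correct and follows essentially the same route as the paper: the easy implications are read off from Fact \ref{equiv1}, and $(1)\Rightarrow(4)$ is handled by producing, via Stone--Weierstrass, a sup-norm dense subfamily of $\mathcal{F}_{X/E}$ of size at most $|T|+\lambda$ in each arity and then counting $\mathcal{F}_{X/E}$-types using condition (6) of Fact \ref{equiv1}. The one subtlety you flag (that arity-$m$ members of $\mathcal{F}_{X/E}$ are exactly the continuous functions on the quotient type space) is avoided in the paper's proof, which instead notes that $\mathcal{F}^m_{X/E}$ is a subspace of the Banach algebra of functions on $X\times\C^m$ extending to CL-formulas, whose density character is at most $|T|+\lambda$ since first-order formulas generate a dense subalgebra, so a dense subset $\mathcal{D}_m\subseteq\mathcal{F}^m_{X/E}$ of that size exists without any exact identification.
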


\begin{proof}
This follows easily from Fact \ref{equiv1}. Only (1) $\Rightarrow$ (4) is a bit more delicate, which we will explain. So assume (1). Then we have (6) from Fact \ref{equiv1}. 

Fix $m < \omega$. By Stone-Weierstrass theorem, the first order formulas restricted to $X \times \C^m$ generate a dense subalgebra $\mathcal{A}_m$ of cardinality at most $|T|+ \lambda$ of the Banach algebra $\mathcal{B}_m$ of all functions $f: X \times \C^m \to \R$ which extend to a CL-formula from $\C^\lambda \times \C^m$ to $\R$. As the family $\mathcal{F}^m_{X/E}$ of those functions from $\mathcal{B}_m$ which factor through  $X/E \times \C^m$ is a subspace of $\mathcal{B}_m$, it also has a dense subset $\mathcal{D}_m$ of cardinality at most $|T|+\lambda$. Since clearly $\mathcal{F}_{X/E} = \bigcup_{m<\omega} \mathcal{F}^m_{X/E}$, we get that the complete $\mathcal{F}_{X/E}$-type over $M$ of an element $a \in \C^\lambda$ is determined by $\bigcup_{m<\omega} \bigcup_{f \in \mathcal{D}_m} \tp_f(a/M)$. Using this and (6) from Fact \ref{equiv1}, one easily gets (4) in Corollary \ref{corollary: 2.4}.
\end{proof}

	
	\begin{remark}\label{2.7}
		For any model $M$ of $T$ there is a natural bijection $$ S_{X/E}(M)\to S_{\mathcal{F}_{X/E}}(M). $$
	\end{remark}
\begin{proof}
$S_{\mathcal{F}_{X/E}}(M)$ can be seen as $\bigslant{S_X(M)}{\sim_{\mathcal{F}_{X/E}}}$,
where for every $p,q \in S_X(M)$ and some (equivalently, any) $a_1' \models p$ and $a_2' \models q$:
$$p\sim_{\mathcal{F}_{X/E}} q \iff (\forall f(x,y)\in \mathcal{F}_{X/E}) (\forall b\in M^{|y|})( f(a_1',b)=f(a_2',b)).$$
On the other hand, $S_{X/E}(M)=\bigslant{S_X(M)}{\sim_E}$, where for every $p,q \in S_X(M)$ and some (equivalently, any) $a_1' \models p$ and $a_2' \models q$:
$$p\sim_Eq\iff a_1'/E\equiv_M a_2'/E \iff (\forall m<\omega)(\forall b\in M^m)( (a_1'/E,b)\equiv (a_2'/E,b) ).$$
By Proposition \ref{2.3}, $p\sim_{\mathcal{F}_{X/E}} q$ iff $p\sim_Eq$. Hence, the conclusion follows.
\end{proof}

From the previous results,
we get some characterizations of stability of $X/E$.
	\begin{cor}\label{2.8}
		The following conditions are equivalent:\begin{enumerate}
			\item $X/E$ is stable.
			\item $\forall f\in \mathcal{F}_{X/E}$ $(f$ is stable$)$.
			\item $\forall$ $M\models T$ $\forall f\in \mathcal{F}_{X/E}$ $\forall p\in S_f(M)$ $(p$ is definable$)$.
			\item $\exists \mu\geq \lvert T\rvert$ $\forall M\models T$ $(\lvert M\rvert\leq \mu \implies \lvert S_{\mathcal{F}_{X/E}}(M)\rvert\leq \mu)$.  
			\item $\exists \mu\geq \lvert T\rvert$ $\forall M\models T$ $(\lvert M\rvert\leq \mu \implies \lvert S_{X/E}(M)\rvert\leq \mu)$. 
			\item $\forall \mu = \mu ^{|T|+\lambda} \geq \lvert T \rvert$ $\forall M\models T$ $(\lvert M\rvert\leq \mu \implies \lvert S_{X/E}(M)\rvert\leq \mu)$.
		\end{enumerate}
	\end{cor}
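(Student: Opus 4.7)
The plan is to stitch together the three results established just above: Corollary \ref{2.4}, Corollary \ref{corollary: 2.4}, and Remark \ref{2.7}. None of the implications require new ideas; the work has already been done, and what remains is essentially bookkeeping with the quantifier patterns (over $\mu$ and $M$) and the three different type spaces $S_f(M)$, $S_{\mathcal{F}_{X/E}}(M)$, and $S_{X/E}(M)$.

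Concretely, (1) $\Leftrightarrow$ (2) is exactly Corollary \ref{2.4}. Corollary \ref{corollary: 2.4} then gives (2) $\Leftrightarrow$ (3), (2) $\Leftrightarrow$ (4), and (2) $\Leftrightarrow$ (6$'$), where (6$'$) denotes the version of (6) formulated in terms of $S_{\mathcal{F}_{X/E}}(M)$ rather than $S_{X/E}(M)$. By Remark \ref{2.7} there is a natural bijection $S_{X/E}(M) \to S_{\mathcal{F}_{X/E}}(M)$ for every $M \models T$, so the two cardinalities coincide. This immediately yields (4) $\Leftrightarrow$ (5) and (6$'$) $\Leftrightarrow$ (6), closing the cycle.

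The only step that might seem to require extra care is (6) $\Rightarrow$ (5), but this is just the trivial ``$\forall \mu \Rightarrow \exists \mu$'' observation after specializing to any fixed $\mu$ satisfying $\mu = \mu^{|T|+\lambda} \geq |T|$ (for instance $\mu = (|T|+\lambda)^{|T|+\lambda}$, which exists by elementary cardinal arithmetic). Consequently, I do not anticipate any substantive obstacle in assembling the proof; once one has drawn the diagram of implications, each arrow is a direct citation of one of the three preceding results.
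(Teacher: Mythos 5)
Your proof is correct and is essentially the paper's own argument: the paper likewise gets (1)--(4) from Corollaries \ref{2.4} and \ref{corollary: 2.4}, and then uses the bijection of Remark \ref{2.7} to transfer the cardinality conditions from $S_{\mathcal{F}_{X/E}}(M)$ to $S_{X/E}(M)$, yielding (5) and (6). The extra remark about (6) $\Rightarrow$ (5) and the cardinal $\mu=(|T|+\lambda)^{|T|+\lambda}$ is harmless but not needed, since all the cited equivalences are biconditionals.
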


	\begin{proof}
The equivalence between (1), (2), (3), and (4) follows from Corollaries \ref{2.4} and \ref{corollary: 2.4}. The equivalence of (4) and (5) follows from Remark \ref{2.7}. The equivalence of (2) and (6) follows from Corollary \ref{corollary: 2.4} and Remark \ref{2.7}.
	\end{proof}

As an application of the characterization from Corollary \ref{2.8}(6), we give a quick proof of Remark 2.5(iii) from \cite{MR3796277} that $G^{st}$ does not have proper hyperdefinable, stable quotients (which was left to the reader in  \cite{MR3796277}). 
Namely, suppose $H < G^{st}$ is a proper $A$-$\bigwedge$-definable subgroup for some $A$; add all elements of $A$ as new constants.
We need to show that $G^{st}/H$ is unstable. By minimality of $G^{st}$, $G/H$ is unstable. So, by Corollary \ref{2.8}(6), there is $\mu = \mu ^{|T|+\lambda} \geq \lvert T \rvert$, a model $M$ of $T$ of cardinality $\mu$, and a sequence $(g_i)_{i<\mu^+}$ in $G$ such that $\tp(g_iH/M) \ne \tp(g_jH/M)$ for all $i \ne j$. Since $G/G^{st}$ is stable, by Corollary \ref{2.8}(6), there is a subset $I$ of $\mu^+$ of cardinality $\mu^+$ such that $g_iG^{st}\equiv_M g_jG^{st}$ for all $i,j \in I$. Fix $i_0 \in I$ and put $I_0:=I \setminus \{i_0\}$. Mapping all $g_i$, $i \in I_0$, by automorphisms over $M$, we can assume that they are all in the coset $g_{i_0}G^{st}$. Then $g_i':= g_{i_0}^{-1}g_{i} \in G^{st}$ for all $i \in I_0$. Moreover, take any $N \succ M$ containing $g_{i_0}$ and with $|N|= \mu$. Then $\tp(g_i'H/N) \ne \tp(g_j'H/N)$ for every distinct $i,j \in I_0$. Hence,  by Corollary \ref{2.8}, $G^{st}/H$ is unstable.

Next, we recall the definition of NIP for a hyperdefinable set, given in \cite[Remark 2.3]{MR3796277}, and we introduce the notion of generic stability 
for hyperimaginary types.
	\begin{defin}
	A hyperdefinable set $X/E$ has {\em NIP} if there	do not exist an indiscernible sequence $(b_i)_{i<\omega}$ and $d\in X/E$ such that
	$((d, b_{2i}, b_{2i+1}))_{i<\omega}$ is indiscernible and $tp(d, b_0) \neq tp(d, b_1)$. $($Note that
	the $b_i$ can be anywhere, not necessarily in $X/E$.$)$
\end{defin}

Let $p \in S_{X/E}(\C)$ be invariant over $A$. A {\em Morley sequence in $p$ over $A$} is a sequence $(a_i)_i$ of elements of $X/E$ such that $a_i \models p |_{Aa_{<i}}$. As in the home sort, by a standard argument, one can check that Morley sequences (of a given length) in $p$ over $A$  are $A$-indiscernible and have the same type over $A$.

		\begin{defin}
		An $A$-invariant type $p \in S_{X/E}(\C)$ is {\em generically stable} if every  Morley sequence $(a_i/E)_{i<\omega+\omega}$ in $p$ over $A$ satisfies $(\forall \varepsilon >0)$ $(\forall r\in \mathbb{R})$ $(\forall s\leq r-\varepsilon)$ $(\forall f(x,y)\in \mathcal{F}_{X/E})$ $(\forall b\in \mathfrak{C}^{\lvert y \rvert})$
\useshortskip
\begin{equation*} 
\begin{gathered}
\{i<\omega+\omega: f(a_i,b)\leq s\} \text{ is finite}\\
\text{or}\\
\{i<\omega+\omega: f(a_i,b)\geq r\}\text{ is finite.}
\end{gathered}
\end{equation*}
	\end{defin}

Generic stability of $p$ does not depend on the choice of $A$ over which $p$ is invariant. Using compactness theorem, one can show the following characterization.

	\begin{prop}\label{proposition: characterization of gen. stab.}
	An $A$-invariant type  $p\in S_{X/E}(\C)$ is generically stable if and only if for every $\varepsilon>0$ and $f(x,y)\in\mathcal{F}_{X/E}$ there exists $N(f,\varepsilon)\in \mathbb{N}$ for which there is no Morley sequence $(a_i/E)_{i<\omega}$ in $p$ over $A$, subsequences $R,S$ each of which of length at least $N(f,\varepsilon)$, and $b\in \C^{|y|}$ such that  $\lvert f(a_i,b)-f(a_j,b)\rvert \geq \varepsilon$ for all $a_i/E\in R$ and $a_j/E\in S$.
\end{prop}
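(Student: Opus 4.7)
The plan is to prove both implications, with $(\Leftarrow)$ being the easier direct contradiction and $(\Rightarrow)$ requiring a pigeonhole plus compactness argument to convert per-$N$ witnesses into a single length-$(\omega+\omega)$ counterexample.

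For $(\Leftarrow)$, I would assume the uniform bound $N(f,\varepsilon)$ and suppose towards contradiction that generic stability fails, yielding a Morley sequence $(a_i/E)_{i<\omega+\omega}$ in $p$ over $A$ together with $\varepsilon > 0$, $r \in \mathbb{R}$, $s \leq r - \varepsilon$, $f \in \mathcal{F}_{X/E}$, and $b$ such that both $I_1 = \{i : f(a_i,b) \leq s\}$ and $I_2 = \{i : f(a_i,b) \geq r\}$ are infinite. Picking $N := N(f,\varepsilon)$ indices from each of $I_1, I_2$, one gets, after restricting to their union, a Morley sequence of length $2N$ (any strictly increasing subsequence of a Morley sequence over $A$ is still a Morley sequence, as each term realises $p$ restricted to $A$ together with the preceding terms). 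Extending this to a Morley sequence $(a'_i/E)_{i<\omega}$ and letting $R, S \subseteq \omega$ denote the positions coming from $I_1, I_2$ respectively, one has $|R|, |S| \geq N$ and $|f(a'_i,b) - f(a'_j,b)| \geq r - s \geq \varepsilon$ for all $a'_i/E \in R$, $a'_j/E \in S$, contradicting the choice of $N$.

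For $(\Rightarrow)$, suppose $p$ is generically stable and, for contradiction, that the uniform bound fails for some $\varepsilon, f$. Then for each $N$ there are a Morley sequence $(a^N_i/E)_{i<\omega}$ in $p$ over $A$, subsequences $R^N, S^N$ of length $\geq N$, and $b^N \in \C^{|y|}$ with $|f(a^N_i,b^N) - f(a^N_j,b^N)| \geq \varepsilon$ whenever $a^N_i/E \in R^N$ and $a^N_j/E \in S^N$. The range of $f$ is a continuous image of a compact type space, hence compact; partition it into finitely many closed intervals of diameter less than $\varepsilon/4$. By pigeonhole, each $R^N$ (resp.\ $S^N$) contains a further subsequence of size $\geq |R^N|/K$ (for $K$ depending only on $\varepsilon, f$) on which $f(\cdot, b^N)$ takes values in a single such interval $J^N_R$ (resp.\ $J^N_S$). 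As only finitely many pairs $(J^N_R, J^N_S)$ occur, passing to a sub-family of $N$'s we may assume the pair is constant, producing fixed intervals $J_R \subseteq (-\infty, s]$ and $J_S \subseteq [r, +\infty)$ with $r - s \geq \varepsilon/2$. Now the partial type $\Sigma(x_0,x_1,\dots;y_0,y_1,\dots;z)$ expressing that the concatenation $(x_0, x_1, \dots, y_0, y_1, \dots)$ is a Morley sequence in $p$ over $A$, together with $f(x_i,z) \leq s$ and $f(y_j,z) \geq r$ for all $i,j < \omega$, is finitely satisfiable using the thinned witnesses for sufficiently large $N$, hence realised by compactness. Its realisation is a Morley sequence of length $\omega+\omega$ violating generic stability at the parameters $(\varepsilon', r, s, f, b)$ with $\varepsilon' := r - s \geq \varepsilon/2$.

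The main obstacle is the forward direction: the per-$N$ counterexamples come with their own parameters $(b^N, s^N, r^N)$, and the argument must pin these down (up to acceptable error) before compactness can be invoked. This is handled by the two-stage pigeonhole, first for a common value-interval within each side and then for a common pair of intervals across $N$'s, using compactness of the range of the CL-formula $f$ so that a uniform gap between $s$ and $r$ survives the limit.
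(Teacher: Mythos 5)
The paper itself leaves this proposition to the reader (``Using compactness theorem, one can show\ldots''), and your overall plan --- a direct contradiction for ($\Leftarrow$) and pigeonhole plus compactness for ($\Rightarrow$) --- is the intended route; your ($\Leftarrow$) direction is fine. However, there is a genuine gap in the finite-satisfiability step of ($\Rightarrow$). Morley sequences in $p$ over $A$ are only $A$-indiscernible as \emph{ordered} sequences, so a finite fragment of your $\Sigma$ in the variables $x_{i_1},\dots,x_{i_k},y_{j_1},\dots,y_{j_l}$ is realized inside a witness sequence only by an increasing tuple in which the $k$ elements with $f$-value $\leq s$ all \emph{precede} the $l$ elements with $f$-value $\geq r$. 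Your hypothesis at level $N$ only provides index sets $R^N,S^N$ of size $\geq N$ with separated values and gives no control over their relative position: it is consistent with everything you have established that in every witness all $S^N$-indices precede all $R^N$-indices (and the remaining indices take values strictly between $s$ and $r$), in which case even the fragment with $k=l=1$ of your $\Sigma$ (which puts ``$\leq s$'' on the first $\omega$-block) cannot be satisfied from the witnesses; so ``finitely satisfiable using the thinned witnesses'' is not justified as written.

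The repair is short but must be made: in a linear order, given two disjoint index sets of size $\geq M$, splitting at a median shows that either $\lfloor M/2\rfloor$ members of the first precede $\lfloor M/2\rfloor$ members of the second, or vice versa; so after your interval pigeonhole perform one more pigeonhole over $N$ to fix which relative-order pattern occurs for infinitely many $N$, and attach the conditions $f(\cdot,z)\leq s$ and $f(\cdot,z)\geq r$ to the first and second $\omega$-blocks of $\Sigma$ in the corresponding order (swapping them if necessary) --- either assignment contradicts generic stability, since all that is needed is that both extreme-value sets are infinite along an $(\omega+\omega)$-Morley sequence. Two points you use silently should also be recorded: that ``the concatenation is a Morley sequence in $p$ over $A$'' is a type-definable condition over $A$ (it is equivalent to realizing the common type over $A$ of Morley sequences of length $\omega+\omega$, which exists by the remark preceding the proposition), and that conversely any realization of that type is an actual Morley sequence (map it onto one by an automorphism over $A$ and use $A$-invariance of $p$); this is what makes $\Sigma$ a legitimate partial type and justifies your final sentence that its realization violates generic stability.
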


Our next goal is to extend Corollary \ref{2.8} to:

	\begin{teor}\label{equivteor}
	Assume $X/E$ has NIP. The following conditions are equivalent:
	\begin{enumerate}
		\item $X/E$ is stable.
		\item $\forall$ $M\models T$ $\forall f\in \mathcal{F}_{X/E}$ $\forall p\in S_f(M)$ $($$p$ is definable$)$.
		\item $\exists \lambda\geq \lvert T\rvert$ $\forall M\models T$ $(\lvert M\rvert\leq \lambda \implies \lvert S_{X/E}(M)\rvert\leq \lambda)$.
		\item Any indiscernible sequence of elements of $X/E$ is totally indiscernible.
		\item Any global invariant $($over some $A$$)$ type $p \in S_{X/E}(\C)$ is generically stable.
	\end{enumerate}
\end{teor}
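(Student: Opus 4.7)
The equivalence of (1), (2), and (3) is already settled by Corollary \ref{2.8} without using NIP, so the plan is to fit (4) and (5) into this loop via the cycle $(1) \Rightarrow (4) \Rightarrow (1)$ together with $(1) \Rightarrow (5) \Rightarrow (1)$, using NIP only where genuinely needed. Throughout I would read the indiscernibility statements in (4) and (5) in the standard stability-theoretic sense, namely over arbitrary parameter sets; under NIP this strengthening of the literal over-$\emptyset$ version of (4) follows by naming parameters as constants, since NIP for $X/E$ is preserved by naming constants.

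For $(1) \Rightarrow (4)$, which needs no NIP, I would lift an indiscernible $(a_i/E)_{i<\omega}$ to an indiscernible $(a_i)_{i<\omega}$ in $X$ (Ramsey plus compactness) and use the $n$-ary analog of Proposition \ref{2.3} to translate total indiscernibility into symmetry, under all permutations, of the values of functions from the natural algebra $\mathcal{F}_{(X/E)^n}$. A Stone--Weierstrass reduction (of the kind in the proof of Corollary \ref{corollary: 2.4}) brings this down to binary $f \in \mathcal{F}_{X/E}$, where Corollary \ref{2.4} combined with the classical continuous-logic fact that CL-stable formulas have totally indiscernible sequences (see \cite{MR2657678}) finishes the job. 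For the converse $(4) \Rightarrow (1)$, I would argue by contrapositive: pick an unstable $f \in \mathcal{F}_{X/E}$ via Corollary \ref{2.4} and extract an indiscernible $(a_i,b_i)_{i<\omega}$ with $a_i \in X$ for which $f(a_i,b_j)=u$ when $i<j$, $f(a_i,b_j)=v$ when $i>j$, $f(a_i,b_i)=w$, and $|u-v|\geq\varepsilon$. The projection $(a_i/E)_i$ is indiscernible over $B := \{b_j : j<\omega\}$, so (4) applied over $B$ produces $\sigma \in \aut(\C/B)$ swapping $a_0/E$ with $a_1/E$; since $f$ factors through $X/E$ in its first argument, applying $\sigma$ forces $u = f(a_0,b_1) = f(a_1,b_1) = w$ and $v = f(a_1,b_0) = f(a_0,b_0) = w$, hence $u=v$, a contradiction. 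The main obstacle in this direction is the legitimate application of (4) over the parameter set $B$, which is precisely where NIP enters.

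For $(1) \Rightarrow (5)$, given $A$-invariant $p \in S_{X/E}(\C)$, (2) provides definability of the $f$-restrictions of $p$; the standard stable continuous-logic argument then makes every Morley sequence $(a_i/E)_i$ of $p$ over $A$ $Ab$-indiscernible for every $b$, which by Proposition \ref{2.3} forces $f(a_i,b)$ constant in $i$, trivially verifying the condition of Proposition \ref{proposition: characterization of gen. stab.} (one of the two sets in its statement is empty). For $(5) \Rightarrow (1)$ I would take the contrapositive: extend by compactness the indiscernible sequence from the previous step to length $\omega+\omega$, and let $p$ be the EM-limit of $(a_i/E)_{i<\omega+\omega}$ in $S_{X/E}(\C)$, which under NIP for $X/E$ is invariant over $A := \{b_i : i<\omega+\omega\}$ and makes $(a_i/E)_{i<\omega+\omega}$ a Morley sequence of $p$ over $A$. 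Choosing $b := b_\omega$ then yields $f(a_i,b)=u$ for $i<\omega$ and $f(a_i,b)=v$ for $\omega\leq i<\omega+\omega$; picking any $s<r$ strictly between $u$ and $v$ produces two infinite sets of indices violating generic stability of $p$.
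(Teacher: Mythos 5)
Your reduction of everything to the two loops through (4) and (5) matches the paper's overall plan, but the two directions that carry the real content both have genuine gaps. For $(4)\Rightarrow(1)$: if $(a_i,b_i)_{i<\omega}$ is indiscernible, the projection $(a_i/E)_{i<\omega}$ is indiscernible over $\emptyset$ but in general \emph{not} over $B=\{b_j : j<\omega\}$ (the type of $a_{i_1},\dots,a_{i_n}$ over a tuple of $b_j$'s depends on how the indices interleave), so there is no indiscernible sequence over $B$ to which one could even try to apply (4). Moreover, your proposed fix --- upgrading (4) to ``over arbitrary parameter sets'' by naming constants because NIP is preserved --- is circular: (4) is a \emph{hypothesis} about $T$, and after adding constants for $B$ the hypothesis says nothing about sequences that are only indiscernible over $B$; assuming (4) over all parameter sets is assuming a strictly stronger statement whose equivalence with (1) is part of what is to be proved. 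The paper's NIP input at this point is entirely different and is not replaced by anything in your sketch: from instability one takes $(a_i/E,b_i)_{i\in\mathbb{Z}}$, uses (4) over $\emptyset$ to make $(a_i/E)$ totally indiscernible, and then invokes the finite--cofinite Lemma \ref{fincofin}, whose proof needs the ``switched roles'' Corollary \ref{corollary: switched role of X/E and anything} obtained from the shattering lemmas. Without that (or a substitute), your swap-automorphism over $B$ has no justification.

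The (5)-loop is also flawed as written. In $(1)\Rightarrow(5)$, Morley sequences of an $A$-invariant type are \emph{not} $Ab$-indiscernible for arbitrary $b$ (take $b$ interdefinable with some of the $a_i$'s), so $f(a_i,b)$ need not be constant; the correct statement, and all that generic stability asks, is that one of the two sets in Proposition \ref{proposition: characterization of gen. stab.} is finite, which the paper gets by splitting into the ``infinitely many alternations'' and ``split at $\omega$'' cases and contradicting stability of $f$ via Ramsey and compactness. In $(5)\Rightarrow(1)$ your configuration is self-contradictory: if $b_\omega\in A$ and $(a_i/E)_{i<\omega+\omega}$ were a Morley sequence over $A$ of an $A$-invariant type, it would be $A$-indiscernible, which is impossible since $f(a_i,b_\omega)$ jumps from $u$ to $v$ at $\omega$; also the limit (EM-limit) type of $(a_i/E)$ is invariant over the sequence $\{a_i/E\}$, not over $\{b_i\}$, so the claimed invariance and Morley-sequence property both fail. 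The paper avoids all of this by proving $(5)\Rightarrow(2)$ instead: extend $p\in S_f(M)$ to a global coheir (Fact \ref{fact: instead of BY}), which is $M$-invariant, hence generically stable by (5), hence definable by Corollary \ref{corollary: gen stab implies definable}, and its $f$-definition is $M$-invariant, hence over $M$. Finally, in $(1)\Rightarrow(4)$ your ``Stone--Weierstrass reduction to binary $f$'' is not a valid step --- density arguments do not reduce the arity of the symmetry statement --- whereas the paper's short dense-order/swap argument over parameters from the sequence does the job; this direction, unlike the two above, is easily repaired.
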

From the proof of this theorem, it will be clear that $(1)$, $(2)$, $(3)$, and $(5)$ are equivalent and imply $(4)$ without the NIP assumption; NIP is used to prove the implication from $(4)$ to $(1)$.

In order to prove Theorem \ref{equivteor}, we will first prove some results about hyperdefinable sets with NIP and about generically stable types.

From now on, EM will stand for Erenfeucht-Mostowski.  By the {\em EM-type of a sequence $I=(a_i/E)_{i \in \mathcal{I}}$} (symbolically, $\EM(I)$) we mean the set of all formulas $\varphi(x_1,\dots,x_n)$ such that for every $i_1 < \dots<i_n \in \mathcal{I}$, $\varphi(a_{i_1}',\dots,a_{i_n}')$ holds for all $a_{i_1}' \in [a_{i_1}]_E, \dots, a_{i_n}' \in [a_{i_n}]_E$, where $n$ ranges over $\omega$. We say that an indiscernible sequence $J$ {\em satisfies the EM-type of $I$} if $\EM(I) \subseteq \EM(J)$. By Ramsey's theorem and compactness, for every sequence $I$ there is an indiscernible sequence $J$ satisfying $\EM(I)$; we can even require that $J$ has an indiscernible sequence of representatives.

In the next three lemmas, $X/E$ and $Y/F$ are arbitrary $\emptyset$-hyperdefinable sets.

\begin{lema}\label{lemma: 2.11} The following conditions are equivalent:
\begin{enumerate}
\item       There	exists an indiscernible sequence $(b_i)_{i<\omega}$ in $Y/F$ and $d\in X/E$ such that
	$((d, b_{2i}, b_{2i+1}))_{i<\omega}$ is indiscernible and $\tp(d, b_0) \neq \tp(d, b_1)$.

\item 	There exists an  indiscernible sequence $(b_i)_{i<\omega}$ in $Y/F$ and $d\in X/E$ such that 
	\useshortskip
	\begin{align*}
		\tp(d,b_i)=\tp(d,b_0)&\iff i \text{ even}\\
		\tp(d,b_i)=\tp(d,b_1)&\iff i \text{ odd}.
	\end{align*}
\end{enumerate}
\end{lema}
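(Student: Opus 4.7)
The plan is to handle the two implications separately, using throughout the auxiliary sequence of block-tuples $c_i := (d, b_{2i}, b_{2i+1})$ for $i<\omega$.

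The direction $(1)\Rightarrow(2)$ is nearly immediate. The indiscernibility of $((d, b_{2i}, b_{2i+1}))_{i<\omega}$ yields $\tp(d,b_{2i})=\tp(d,b_0)$ and $\tp(d,b_{2i+1})=\tp(d,b_1)$ for every $i$, and together with the assumption $\tp(d,b_0)\neq\tp(d,b_1)$ this reads off the two equivalences in $(2)$.

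For $(2)\Rightarrow(1)$, I would apply the Ramsey-plus-compactness $\EM$-extraction recalled just before the lemma to $(c_i)_{i<\omega}$, obtaining an indiscernible sequence $(c'_i)_{i<\omega}$, with an indiscernible sequence of representatives, satisfying $\EM((c_i)_i)\subseteq\EM((c'_i)_i)$; write $c'_i = (d', b'_{2i}, b'_{2i+1})$. Three properties then need to be verified. First, $d'$ is constant across $i$, because the $\bigwedge$-condition asserting $E$-equivalence of the first coordinates of $c_{i_1}$ and $c_{i_2}$ already holds in the original (as $d$ literally repeats), so it lies in $\EM((c_i)_i)$ and transfers. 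Second, $\tp(d',b'_0)\neq\tp(d',b'_1)$ holds because a formula separating $\tp(d,b_0)$ from $\tp(d,b_1)$ mentions only coordinates of the single tuple $c_0$, hence lies in $\EM((c_i)_i)$. Third, and most delicate, the flat sequence $(b'_i)_{i<\omega}$ must itself remain indiscernible in $Y/F$.

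To establish the last point, fix any formula $\varphi$ and any increasing tuples $k_1<\dots<k_m$ and $l_1<\dots<l_m$ of natural numbers. Each $k_j$ and $l_j$ lies in a unique block, so the equivalence $\varphi(b_{k_1},\dots,b_{k_m}) \leftrightarrow \varphi(b_{l_1},\dots,b_{l_m})$ can be packaged as a single formula $\Psi$ on finitely many consecutive $c_i$'s whose block-indices jointly cover both tuples. By flat indiscernibility of the original $(b_i)_{i<\omega}$, $\Psi$ holds on every increasing tuple of block-indices in the original, so $\Psi\in\EM((c_i)_i)$, and hence transfers to $(c'_i)_i$, giving $\varphi(b'_{k_1},\dots,b'_{k_m}) \leftrightarrow \varphi(b'_{l_1},\dots,b'_{l_m})$. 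The main obstacle lies precisely in this step: systematically encoding flat indiscernibility of $(b_i)_i$ as a family of formulas on the block sequence, keeping track of how the positions $k_j, l_j$ distribute across blocks. The rest is routine $\EM$-type transfer.
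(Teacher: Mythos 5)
Your direction $(1)\Rightarrow(2)$ is fine, and the verification that the first coordinates of the extracted blocks are all $E$-equivalent is correct, but the other two verifications rest on reasoning that is not valid for hyperimaginaries, so there is a genuine gap. For the type-inequality step: since $d\in X/E$ and $b_i\in Y/F$ are classes, $\tp(d,b_0)\neq\tp(d,b_1)$ is \emph{not} witnessed by a single ``separating formula'' in the naive sense; you need a compactness argument (in the spirit of Proposition \ref{2.3}) producing a formula $\chi(x,y_0,y_1)$ which holds of all representatives of the blocks and is inconsistent with the $\emptyset$-$\bigwedge$-definable set $\{(x,y_0,y_1):\tp(x/E,y_0/F)=\tp(x/E,y_1/F)\}$; only then does its transfer force $\tp(d',b'_0)\neq\tp(d',b'_1)$. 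Moreover, the inference ``it mentions only the coordinates of $c_0$, hence lies in $\EM((c_i)_i)$'' is false: by the definition used in the paper, a one-block formula belongs to the EM-type only if it holds, for all representatives, at \emph{every} index $i$, not just at $i=0$. This is exactly where hypothesis $(2)$ — that $\tp(d,b_{2i})=\tp(d,b_0)$ and $\tp(d,b_{2i+1})=\tp(d,b_1)$ for all $i$ — has to be invoked to get uniformity along the sequence.

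The flat-indiscernibility step is the most problematic: you treat the $b_i\in Y/F$ as real tuples. The expression $\varphi(b_{k_1},\dots,b_{k_m})$ is not well-defined for hyperimaginaries, and your biconditional $\Psi$ is in general \emph{not} in $\EM((c_i)_i)$, because the EM-type contains only formulas true of \emph{all} representatives, while representatives of class-tuples with equal types can disagree on $\varphi$; so flat indiscernibility of $(b_i)_i$ (equality of types of tuples of classes) does not put $\Psi$ into the EM-type. The step can be repaired — transfer instead every formula from the $\emptyset$-$\bigwedge$-definable relation expressing ``these two flat tuples of classes have the same type'', evaluated at the two within-block position patterns — but as written the argument fails. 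Note that the paper sidesteps all of this by keeping the original sequence $(b_i)_{i<\omega}$ and replacing only $d$ by a $\tilde d$ such that $(\tilde d,b_{2i},b_{2i+1})_{i<\omega}$ is indiscernible and satisfies the EM-type of $(d,b_{2i},b_{2i+1})_{i<\omega}$: then flat indiscernibility is automatic, and only the type-inequality transfer needs care.
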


\begin{proof}
	$(1) \Rightarrow (2)$ is clear.
	
	$(2) \Rightarrow (1)$ Assume (2). 
Then $\tp(d,b_0)\neq \tp(d,b_1)$.
As $(b_i)_{i<\omega}$ is indiscernible, so is $(b_{2i},b_{2i+1})_{i<\omega}$. Choose  an indiscernible sequence $(\tilde{d},b_{2i},b_{2i+1} )_{i<\omega}$ satisfying the EM-type of $(d,b_{2i},b_{2i+1})_{i<\omega}$. Then, $\tilde{d}\in X/E$ and $(b_i)_{i<\omega}$ witness (1).
\end{proof}

	\begin{lema}
		Let $p, q \in S_{X/E \times Y/F}(\emptyset)$ be distinct types. 
		Then the following conditions are equivalent:
		\begin{enumerate}
			\item There exists an indiscernible sequence $(b_i)_{i<\omega}$ in $Y/F$ and an element $d\in X/E$ with: 
			\useshortskip
			\begin{align*}
				\tp(d,b_i)=p&\iff i \text{ even}\\
				\tp(d,b_i)=q&\iff i \text{ odd}.
			\end{align*}
			\item There is a sequence $(b_i)_{i<\omega}$ in $Y/F$ (not necessarily
			indiscernible) which is shattered by $(p,q)$ in the sense that for every
			$I \subseteq \omega$ there is $d_I \in X/E$ with 
			\useshortskip
			\begin{align*} \tp(d_I,b_i) =p \iff i
			\in I\\ \tp(d_I,b_i)=q \iff i \notin I.
		\end{align*}
		\end{enumerate}
	\end{lema}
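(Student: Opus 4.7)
The plan is to prove each direction by a compactness argument.

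For $(1)\Rightarrow(2)$, the same indiscernible sequence $(b_i)$ from (1) will serve as the shattered sequence in (2). Given $I\subseteq \omega$, I would produce $d_I$ by realizing the partial type
\[
\Pi_I(x) := \{x\in X/E\}\cup \{\tp(x,b_i)=p : i\in I\}\cup \{\tp(x,b_i)=q : i\notin I\}.
\]
For finite consistency, fix finitely many indices $i_1<\cdots <i_n$ with a prescribed pattern $(s_1,\dots,s_n)\in\{p,q\}^n$ (dictated by $I$). Choose any $j_1<\cdots<j_n$ in $\omega$ with $j_k$ even iff $s_k=p$; this is possible as there are infinitely many evens and odds. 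Indiscernibility of $(b_i)$ yields an automorphism of $\C$ sending $(b_{j_1},\dots,b_{j_n})$ to $(b_{i_1},\dots,b_{i_n})$, and the image of $d$ under this automorphism lies in $X/E$ and realizes the required finite piece of $\Pi_I$.

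For $(2)\Rightarrow(1)$, given a sequence $(b_i)$ shattered by $(p,q)$ (not necessarily indiscernible), I would realize a partial type $T(x,(y_i)_{i<\omega})$ expressing that $(y_i)$ is an indiscernible sequence in $Y/F$, $x\in X/E$, $\tp(x,y_i)=p$ for $i$ even, and $\tp(x,y_i)=q$ for $i$ odd. Any realization then directly witnesses (1). A finite fragment of $T$ mentions only $y_0,\dots,y_{n-1}$ and requires indiscernibility with respect to finitely many formulas. Applying Ramsey's theorem to $(b_i)$ with the coloring given by the finite fragment of formulas, one extracts an infinite subsequence $(b_{j_k})_{k<\omega}$ indiscernible for this fragment. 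Then shattering of $(b_i)$ applied to $I=\{j_k : k<n,\; k\text{ even}\}$ produces $d_I\in X/E$ with $\tp(d_I,b_{j_k})=p$ iff $k$ is even (for $k<n$); setting $y_k:=b_{j_k}$ and $x:=d_I$ realizes the fragment.

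The main technical point lies in $(2)\Rightarrow(1)$: one must use Ramsey to secure indiscernibility while separately invoking the shattering hypothesis of the original sequence to supply the witness $x$. This is handled cleanly by the observation that every finite fragment of $T$ involves only finitely many $y_i$'s, so the witness $d_I$ can be chosen after the Ramsey extraction rather than being carried along during it.
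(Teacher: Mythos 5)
Your proof of $(1)\Rightarrow(2)$ is correct and is essentially the paper's argument: indiscernibility of $(b_i)_{i<\omega}$ is used to move the even/odd pattern onto the pattern prescribed by $I$. The paper does this in one step, via an automorphism induced by an increasing map $\tau$ with $\tau(i)$ even iff $i\in I$; you do it by compactness together with automorphisms of finite subtuples, which is just a different packaging of the same idea.

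The problem is in $(2)\Rightarrow(1)$, at the Ramsey step. The $b_i$ and the variables $y_i$ range over the hyperdefinable set $Y/F$, so the clause of your partial type $T$ saying that ``$(y_i)_{i<\omega}$ is indiscernible'' must be written, in the representative variables, as the conjunction, over pairs of increasing index tuples, of the $\emptyset$-type-definable relation ``the quotient tuples have the same type over $\emptyset$'' (a bounded $\emptyset$-$\bigwedge$-definable equivalence relation, as in the proof of Proposition \ref{2.3}). A finite fragment of $T$ therefore requires finitely many formulas $\theta$ approximating this relation to actually \emph{hold} at prescribed index patterns. Ramsey applied to $(b_i)$ only produces a subsequence on which the truth values of those finitely many formulas are \emph{constant} across increasing tuples. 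In the purely first-order setting this would suffice, since there indiscernibility is a conjunction of biconditionals; here constancy does not imply truth, and a priori the homogeneous colour could be the one on which all the $\theta$'s fail, in which case your chosen witnesses do not realize the fragment. This can be repaired using boundedness of the ``same quotient type'' relation (a formula implied by a bounded type-definable equivalence relation cannot fail on arbitrarily many pairwise comparable tuples, which excludes the negative colour), but that extra argument is exactly what is missing, and it is the whole point in the hyperimaginary setting. The paper sidesteps the issue: instead of extracting a subsequence of $(b_i)$, it realizes a \emph{new} indiscernible sequence $(c_i)$ of hyperimaginaries satisfying $\EM((b_i)_{i<\omega})$ (quoting the extraction fact stated before the lemma), and then transfers shattering from $(b_i)$ to $(c_i)$ by compactness, because each finite shattering requirement is an existential condition lying in the EM-type; condition $(1)$ is then read off directly from the shattered indiscernible sequence $(c_i)$.
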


	\begin{proof}
		$(1)\Rightarrow (2)$ Let $(b_i)_{i<\omega}$ in $Y/F$ and $d\in X/E$ witness $(1)$. Let $I\subseteq \omega$. We can find an increasing one to one map $\tau:\omega \to \omega$ such that for all $i\in \omega$, $\tau(i)$ is even iff $i\in I$. By indiscernibility, the map sending $b_i$ to $b_{\tau(i)}$ for all $i\in \omega$ can be extended to an automorphism $\sigma$. The element $d_I:=\sigma^{-1}(d)$ satisfies the conditions in $(2)$.
		
		$(2)\Rightarrow (1)$ Let $(b_i)_{i<\omega}$ witness $(2)$. We can find an indiscernible sequence $(c_i)_{i<\omega}$ in $Y/F$ satisfying the EM-type of $(b_i)_{i<\omega}$. It follows that for any two disjoint finite sets $I_0,I_1\subseteq \omega$, the partial type 
		$$\{ p(x;c_i): i\in I_0\}\cup \{q(x;c_i): i\in I_1\}$$ 
is consistent. By compactness, the sequence $(c_i)_{i<\omega}$ is shattered by $(p,q)$. In particular, there is $d\in X/E$ such that $\tp(d,c_i)=p$ iff $i$ is even and $\tp(d,c_i)=q$ iff $i$ is odd.
	\end{proof}

\begin{lema}
Let $p,q \in S_{X/E \times Y/F}(\emptyset)$ be distinct. Then there exists an infinite sequence in $Y/F$ shattered by $(p,q)$ if and only if there exists an infinite sequence in $X/E$ shattered by $(p^{opp},q^{opp})$, where $p^{opp}(x,y):=p(y,x)$.
\end{lema}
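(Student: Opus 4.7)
The statement is symmetric under the involution $(p,q) \mapsto (p^{opp},q^{opp})$ together with swapping the roles of $X/E$ and $Y/F$, so it is enough to prove one direction. The plan is a classical VC-duality argument: extract arbitrarily large finite shatterings on the dual side from the given infinite shattering via an explicit combinatorial encoding, and then upgrade to an infinite dual shattering by compactness in the monster model.

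Assume $(b_i)_{i<\omega}$ in $Y/F$ is shattered by $(p,q)$, witnessed by $d_I \in X/E$ for $I \subseteq \omega$. Fix $k < \omega$, set $n := 2^k$, and pick a bijection $\Phi \colon \{0,\dots,n-1\} \to \{0,1\}^k$, $i \mapsto (\Phi(i)_1,\dots,\Phi(i)_k)$. For $m=1,\dots,k$ let $I_m := \{i<n : \Phi(i)_m = 1\}$ and choose $d_{I_m}$ from the shattering. For any $J \subseteq \{1,\dots,k\}$ there is a unique $i(J) < n$ with $\Phi(i(J))_m = 1 \iff m \in J$; then $\tp(d_{I_m}, b_{i(J)}) = p \iff i(J) \in I_m \iff m \in J$. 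Hence $b_{i(J)}$ is the required witness, and $(d_{I_1},\dots,d_{I_k})$ is shattered by $(p^{opp},q^{opp})$.

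To produce the infinite sequence, I would consider the partial type $\Sigma$ in variables $(x_j)_{j<\omega}$ (representatives of elements of $X/E$) and $(y_J)_{J \subseteq \omega}$ (representatives of elements of $Y/F$) asserting $\tp(x_j,y_J) = p$ for $j \in J$ and $\tp(x_j,y_J) = q$ otherwise. A finite subtype mentions only finitely many variables $x_{j_1},\dots,x_{j_s}$ and $y_{J_1},\dots,y_{J_t}$, and the restrictions $J_r \cap \{j_1,\dots,j_s\}$ give at most $2^s$ subsets of an $s$-element set, so the finite shattering of size $s$ produced above realizes this subtype. Thus $\Sigma$ is consistent, and any realization yields the desired $(d_j)_{j<\omega}$ in $X/E$ shattered by $(p^{opp},q^{opp})$. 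The one place requiring care is the choice of the encoding $\Phi$, which must guarantee that every Boolean pattern on $k$ chosen rows appears as some column among $(b_i)_{i<2^k}$; hyperdefinability of $X/E$ and $Y/F$ plays no further role beyond ensuring that the constraints in $\Sigma$ are $\bigwedge$-definable, which is harmless for compactness.
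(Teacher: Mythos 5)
Your argument is correct, and it is the same underlying duality as in the paper, but decomposed differently. The paper applies compactness at the start and on the $Y/F$ side: it stretches the shattered sequence to one indexed by $\mathcal{P}(\omega)$, with witnesses $d_I$ for all $I\subseteq\mathcal{P}(\omega)$, and then reads off the infinite dual shattering in one stroke by setting $d_j:=d_{I_j}$ where $I_j:=\{X\subseteq\omega: j\in X\}$ --- exactly the membership flip $j\in J\iff J\in I_j$ that powers your finite step. You instead make the finite transposition explicit (the $2^k\times k$ pattern matrix via $\Phi$) and postpone compactness to the end, realizing a global partial type on the $X/E$ side whose finite satisfiability is checked against those finite dual shatterings. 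These are dual bookkeepings of one argument: the finite pattern-matching you spell out is essentially what justifies the paper's terse ``by compactness'' step, and both compactness applications rest on the fact, which you rightly flag, that conditions of the form $\tp(x/E,y/F)=p$ are $\bigwedge$-definable over $\emptyset$ (the preimage of a complete type of a hyperimaginary is type-definable), so the relevant partial types are legitimate and saturation applies (your type has $2^{\aleph_0}$-many variable tuples, harmless since $\kappa$ is a strong limit; the paper's stretched sequence needs even more). What the paper's version buys is brevity --- no finite combinatorics at all --- at the cost of working with a sequence of length continuum; your version is the classical VC-duality argument and has the mild advantage of making the quantitative content (the exponential loss $k\mapsto 2^k$) explicit. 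Your reduction of the converse to symmetry via $(p^{opp})^{opp}=p$ matches the paper's.
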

\begin{proof}
	 Let $(b_i)_{i<\omega}$ be a sequence in $Y/F$ shattered by $(p,q)$. By compactness, we can find a sequence $(c_i)_{i\in \mathcal{P}(\omega)}$ in $Y/F$ which is shattered by $(p,q)$ as witnessed by the family $\{ d_I: I\subseteq \mathcal{P}(\omega) \} \subseteq X/E$. Consider the sequence $(d_j)_{j< \omega}$ in $X/E$, where $d_j:=d_{I_j}$ and $I_j:=\{X\subseteq \omega: j\in X\}$. Then for any $J\subseteq \omega$ we have: 
	 \begin{align*} \tp(d_j,c_J) =p \iff j
	 	\in J\\ \tp(d_j,c_J)=q \iff j \notin J,
	 \end{align*}
 because $j\in J$ iff $J\in I_j$. 
 Thus, $(d_j)_{j<\omega}$ is shattered by $(p^{opp},q^{opp})$.
 
 The converse follows by symmetry.
\end{proof}

From the last three lemmas, one easily deduces the following

\begin{cor}\label{corollary: switched role of X/E and anything}
	$X/E$ has NIP if and only if there do not exist an indiscernible sequence $(b_i)_{i<\omega}$ of elements of $X/E$ and $d$ (from anywhere) such that the sequence $(d,b_{2i},b_{2i+1})_{i<\omega}$ is indiscernible and $\tp(d,b_0)\neq \tp(d,b_1)$.
\end{cor}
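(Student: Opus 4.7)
The plan is to fix an arbitrary $\emptyset$-hyperdefinable set $Y/F$ and chain the three preceding lemmas to show that, for this $Y/F$, the existence of a configuration with $(b_i)_{i<\omega}$ indiscernible in $Y/F$, $d \in X/E$, $(d,b_{2i},b_{2i+1})_{i<\omega}$ indiscernible, and $\tp(d,b_0) \ne \tp(d,b_1)$ is equivalent to the existence of the symmetric configuration obtained by swapping the roles of $X/E$ and $Y/F$. Since in the original NIP definition the $b_i$ can live anywhere, and any product of sorts is itself a $\emptyset$-hyperdefinable set $Y/F$ (e.g.\ with $F$ equality), quantifying over all $Y/F$ on both sides and then negating yields exactly the equivalence asserted in the corollary.

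Explicitly, for fixed $Y/F$: by Lemma~\ref{lemma: 2.11}, the configuration with $b_i \in Y/F$ and $d \in X/E$ exists iff there exist an indiscernible sequence $(b_i)_{i<\omega}$ in $Y/F$ and $d \in X/E$ with $\tp(d,b_i) = p$ exactly when $i$ is even and $\tp(d,b_i) = q$ exactly when $i$ is odd, for some distinct $p,q \in S_{X/E \times Y/F}(\emptyset)$. By the second lemma this alternating-parity condition is equivalent to the existence of an infinite sequence in $Y/F$ shattered by $(p,q)$. By the third lemma this is in turn equivalent to the existence of an infinite sequence in $X/E$ shattered by $(p^{opp}, q^{opp}) \in S_{Y/F \times X/E}(\emptyset)$. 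Running the second lemma in reverse, this time with the roles of $X/E$ and $Y/F$ interchanged (both sets are arbitrary, so this is legitimate), gives an alternating-parity configuration with $(b_i)_{i<\omega}$ indiscernible in $X/E$ and $d \in Y/F$; one more application of Lemma~\ref{lemma: 2.11} (again with the roles swapped) converts this to the desired symmetric configuration.

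The main point to check is simply that the bookkeeping with $p^{opp}$ is correct: since $p^{opp} \in S_{Y/F \times X/E}(\emptyset)$ rather than in $S_{X/E \times Y/F}(\emptyset)$, the shattered sequence lies in $X/E$, which is precisely what the backward pass through the second and first lemmas requires in order to place the indiscernible sequence in $X/E$ and $d$ in $Y/F$. No genuinely new idea is needed beyond this formal chaining; the corollary is purely a consequence of the three preceding lemmas together with the observation that ``anywhere'' is covered by letting $Y/F$ range over all $\emptyset$-hyperdefinable sets.
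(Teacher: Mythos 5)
Your chaining is exactly the deduction the paper intends: the paper states the corollary with only the remark ``From the last three lemmas, one easily deduces the following,'' and your forward--backward pass through Lemma \ref{lemma: 2.11}, the shattering lemma, and the $(p^{opp},q^{opp})$ symmetry lemma, with $Y/F$ ranging over all $\emptyset$-hyperdefinable sets (covering ``anywhere'' via trivial $F$), is precisely that argument. The proposal is correct and takes essentially the same approach as the paper.
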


The next lemma is analogous to the finite-cofinite lemma on NIP theories.

\begin{lema}\label{fincofin}
	 Suppose that $X/E$ has NIP. Let $(a_i)_{i\in I}$ be an infinite, totally indiscernible sequence of elements of $X/E$ and $b$ any tuple from $\C$. Then, for any $j_0,j_1\in I$, whenever $\tp(a_{j_0},b)\neq \tp(a_{j_1},b)$, either

\begin{equation*} 
\begin{gathered}
I_0:=\{i\in I: \tp(a_i,b)=\tp(a_{j_0},b)\} \text{ is finite}\\
\text{or}\\
I_1:=\{i\in I:\tp(a_i,b)=\tp(a_{j_1},b)\} \text{ is finite.}
\end{gathered}
\end{equation*}
\end{lema}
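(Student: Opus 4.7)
We argue by contradiction, aiming to violate NIP of $X/E$ via Corollary \ref{corollary: switched role of X/E and anything}. So suppose both $I_0$ and $I_1$ are infinite; set $p := \tp(a_{j_0},b)$ and $q := \tp(a_{j_1},b)$, so $p \ne q$. Pick indices $i_0 < i_1 < i_2 < \cdots$ in $I$ with $i_{2k} \in I_0$ and $i_{2k+1} \in I_1$ for all $k < \omega$. Being a subsequence of the totally indiscernible sequence $(a_i)_{i \in I}$, the sequence $(a_{i_k})_{k<\omega}$ is itself totally indiscernible over $\emptyset$, and by construction $\tp(a_{i_{2k}},b) = p$ and $\tp(a_{i_{2k+1}},b) = q$ for every $k$.

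Next, form the sequence of pairs $\bar{P}_k := (a_{i_{2k}}, a_{i_{2k+1}}) \in (X/E)^2$ for $k < \omega$ and apply the standard Ramsey plus compactness extraction (passing to representatives in the home sort, extracting an indiscernible sequence there, and then projecting back to the hyperdefinable quotient) to obtain a sequence $((D_k, F_k))_{k<\omega}$ in $(X/E)^2$ that is indiscernible over $b$ and realizes the EM-type of $(\bar{P}_k)_{k<\omega}$ over $b$. Set $c_{2k} := D_k$ and $c_{2k+1} := F_k$. Two of the three conditions needed to apply Corollary \ref{corollary: switched role of X/E and anything} then follow immediately: first, evaluating the EM-type over $b$ on single-variable CL-formulas $f(x_0,b)$ for $f \in \mathcal{F}_{X/E}$ gives $f(c_{2k},b) = f(a_{j_0},b)$ and $f(c_{2k+1},b) = f(a_{j_1},b)$, so by Proposition \ref{2.3} we have $\tp(c_{2k},b) = p$, $\tp(c_{2k+1},b) = q$, and in particular $\tp(c_0,b) \ne \tp(c_1,b)$; second, $b$-indiscernibility of $((D_k,F_k))_{k<\omega}$ is precisely $\emptyset$-indiscernibility of $((b, c_{2k}, c_{2k+1}))_{k<\omega}$.

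The main difficulty is showing that the flat sequence $(c_k)_{k<\omega}$ is indiscernible over $\emptyset$ as a sequence of singletons, despite the extraction only delivering pair-indiscernibility. The point is that the EM-type over $\emptyset$ of $(\bar{P}_k)_{k<\omega}$, which is a fortiori realized by $((D_k,F_k))_{k<\omega}$, encodes all relevant ``cross-pair'' relationships: a formula of the form $\psi(y_0,x_1)$ or $\psi(x_0,y_0,x_1,y_1)$, evaluated on an increasing tuple of pairs from the original, picks out a flat formula $\psi(a_{i_{l_0}},\ldots,a_{i_{l_n}})$ with $l_0 < \cdots < l_n$, whose truth value is constant by the ordinary indiscernibility of $(a_{i_k})_{k<\omega}$ (inherited from total indiscernibility of $(a_i)_{i \in I}$). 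Consequently, for any $k_0 < k_1 < \cdots < k_m < \omega$,
\[
\tp(c_{k_0},\ldots,c_{k_m}) \;=\; \tp(a_{i_{k_0}},\ldots,a_{i_{k_m}}) \;=\; \tp(a_{i_0},\ldots,a_{i_m}),
\]
which gives the required indiscernibility. With all three hypotheses verified, Corollary \ref{corollary: switched role of X/E and anything} produces the desired contradiction, so one of $I_0, I_1$ must be finite.
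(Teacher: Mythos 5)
There is a genuine gap at the very first step: you ``pick indices $i_0<i_1<i_2<\cdots$ in $I$ with $i_{2k}\in I_0$ and $i_{2k+1}\in I_1$'', but such an \emph{increasing} alternating choice need not exist. Nothing prevents $I_0$ from lying entirely to the left of $I_1$ in the order of $I$, in which case after choosing one index in $I_1$ you can never return to $I_0$. This is not a pathological case: in the intended application (the proof of $(4)\Rightarrow(1)$ of Theorem \ref{equivteor}) one has $I=\mathbb{Z}$ with $I_0$ containing a half-line $\{i\geq 1\}$ and $I_1$ containing $\{i\leq -1\}$, so your selection is impossible exactly where the lemma is needed. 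The paper's proof avoids this by choosing merely \emph{pairwise distinct} indices $i_k$ with $i_k\in I_0$ for $k$ even and $i_k\in I_1$ for $k$ odd, in an enumeration that need not respect the order of $I$, and then invoking \emph{total} indiscernibility to conclude that $(a_{i_k})_{k<\omega}$, in this new enumeration, is still indiscernible. This is precisely the point where the hypothesis of total (rather than ordinary) indiscernibility does its work; your proof only uses total indiscernibility to say that an order-preserving subsequence is again totally indiscernible, which is a weaker use and leaves the selection step unjustified.

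The rest of your argument is fine and is essentially the paper's route with Lemma \ref{lemma: 2.11} unpacked by hand: extracting a $b$-indiscernible sequence of pairs realizing the EM-type of $((a_{i_{2k}},a_{i_{2k+1}}))_k$ over $b$, checking via Proposition \ref{2.3} (and type-definability of the fibers of CL-formulas) that the types $\tp(c_{2k},b)=p$, $\tp(c_{2k+1},b)=q$ are preserved, verifying flat indiscernibility of $(c_k)_k$, and then applying Corollary \ref{corollary: switched role of X/E and anything}. Once you repair the selection step as above (so that the alternating flat sequence is genuinely indiscernible), this verification goes through, and your proof becomes a correct inlining of the paper's ``Lemma \ref{lemma: 2.11} plus Corollary \ref{corollary: switched role of X/E and anything}'' argument rather than a different method.
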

\begin{proof}
	Otherwise, we can build a sequence $(i_k)_{k<\omega}$ of pairwise distinct elements of $I$ so that $i_k\in I_0$ iff $k$ is even, and $i_k\in I_1$ iff $k$ is odd. Then, the sequence $(a_{i_k})_{k<\omega}$ is indiscernible and $$\tp(a_{i_k},b)=\tp(a_{j_0},b)\iff k \text{ even}$$ $$\tp(a_{i_k},b)=\tp(a_{j_1},b)\iff k \text{ odd},$$
	which by Lemma \ref{lemma: 2.11} and Corollary \ref{corollary: switched role of X/E and anything} imply that $X/E$ does not have NIP, a contradiction. 
\end{proof}

	\begin{defin}
		The median value connective $\med_n:[0,1]^{2n-1}\to[0,1]$ is defined by $$ \med_n(t_{<2n-1})=\max_{\substack{w\subseteq 2n-1 \\ \lvert w\rvert=n}}\min_{i\in w} t_i= \min_{\substack{w\subseteq 2n-1 \\ \lvert w\rvert=n}}\max_{i\in w} t_i. $$
	\end{defin}

	This connective, as its name indicates, literally computes the median value of the list of arguments.
	
For  $f(x,y) \in \mathcal{F}_{X/E}$ and $N \in \mathbb{N}^+$, put  
$$d^N f(y,x_{<2N-1}):=\med_{N}(f(x_i,y): i<2N-1).$$	

	As in classical model theory, generically stable types are definable, which follows from the next proposition. For  $p \in S_{X/E}(\C)$, $f(x,y) \in \mathcal{F}_{X/E}$, and $b \in \C^{|y|}$, by $f(x,b)^p$ we mean the value of $p$ at $f(x,b)$ for $p$ treated as an element of $S_{\mathcal{F}_{X/E}}(\C)$ as explained in Remark \ref{2.7} (in other words, it is $f(a,b)$ for $a/E \models p$). We say that $p$ is {\em definable} if it is so as a type in $S_{\mathcal{F}_{X/E}}(\C)$.

	\begin{prop}\label{2.21}
		If $p \in S_{X/E}(\C)$ is generically stable over $A$, then for any $f(x,y)\in \mathcal{F}_{X/E}$, $b\in \mathfrak{C}^{\lvert y\rvert}$, and $(a_i/E)_{i<\omega}$ a Morley sequence in $p$ over $A$, $$ \lvert f(x,b)^p- d^{N(f,\varepsilon)} f(b,a_{<2N(f,\varepsilon)-1})\rvert \leq \varepsilon,$$
		where $N(f,\varepsilon)$ is a number as in Proposition \ref{proposition: characterization of gen. stab.}. 
	\end{prop}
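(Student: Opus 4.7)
The plan is to argue by contradiction from the characterization of generic stability in Proposition \ref{proposition: characterization of gen. stab.}. Set $r := f(x,b)^p$ and $N := N(f,\varepsilon)$, and suppose $|r - d^N f(b, a_{<2N-1})| > \varepsilon$. By symmetry, it suffices to handle the case $d^N f(b, a_{<2N-1}) > r + \varepsilon$; the other case is entirely analogous.

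From the definition $\med_N(t_{<2N-1}) = \max_{|w|=N} \min_{i \in w} t_i$, the hypothesis produces a set $R \subseteq \{0,\ldots,2N-2\}$ with $|R| = N$ and $f(a_i,b) > r + \varepsilon$ for every $i \in R$; since $R$ is finite, there is in fact $\eta > 0$ such that $f(a_i,b) \geq r + \varepsilon + \eta$ on $R$.

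The heart of the argument is to complete the initial segment $(a_i/E)_{i<2N-1}$ of the given Morley sequence by a new tail tailored to $b$: inductively for $k < \omega$, choose $a_{2N-1+k}/E \models p|_{A b a_{<2N-1+k}}$, which exists because $p$ is a complete global type and $A$-invariance yields $Ab$-invariance. Each such new term satisfies $f(a_{2N-1+k},b) = f(x,b)^p = r$. Moreover, the concatenated sequence $(a_i/E)_{i<\omega}$ is itself a Morley sequence in $p$ over $A$: the first $2N-1$ terms are so by hypothesis, and each later term realizes $p$ over the richer base $Aba_{<j}$, which in particular implies it realizes $p|_{Aa_{<j}}$. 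Setting $S := \{2N-1,\ldots,3N-2\}$ we get $|R|, |S| \geq N$, and for every $i \in R$ and $j \in S$ we have $|f(a_i,b) - f(a_j,b)| \geq \varepsilon + \eta \geq \varepsilon$, which contradicts Proposition \ref{proposition: characterization of gen. stab.}.

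The only subtlety worth flagging is the tail-replacement step: for Proposition \ref{proposition: characterization of gen. stab.} to apply, the resulting sequence must be Morley over $A$, not merely over $Ab$. This is automatic because realizing $p|_{Aba_{<j}}$ implies realizing $p|_{Aa_{<j}}$, and the original initial segment is Morley over $A$ by hypothesis. Everything else — the median inequality and the extraction of the subsequences $R$ and $S$ — is then routine.
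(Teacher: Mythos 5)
Your proof is correct and follows essentially the same route as the paper: negate the inequality, use the max--min (resp.\ min--max) form of $\med_N$ to extract $N$ indices where $f(\cdot,b)$ deviates from $f(x,b)^p$ by more than $\varepsilon$, and then append a Morley tail over a base containing $b$ (whose members give $f(\cdot,b)=f(x,b)^p$) to contradict the choice of $N(f,\varepsilon)$ in Proposition \ref{proposition: characterization of gen. stab.}. The only cosmetic differences are that the paper builds the tail over $Ab(a_i)_{i\in w}$ rather than over the full initial segment, and your auxiliary $\eta>0$ is unnecessary since the strict inequality already yields the required $\geq\varepsilon$ gap.
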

	
	\begin{proof}
		Suppose this is not true. Write $N$ for $N(f,\varepsilon)$. Then, we have two cases:

		1) $d^N f(b,a_{<2N-1})-\varepsilon>f(x,b)^p$. This implies $$\max_{\substack{w\subseteq 2N -1 \\ \lvert w\rvert=N}}\min_{i\in w} f(a_i,b)>f(x,b)^p+\varepsilon.$$
		Hence, there is $w$ of size $N$ such that for all $i\in w$ we have $f(a_i,b)>f(x,b)^p + \varepsilon$. Taking a Morley sequence in $p$ over $Ab(a_i)_{i\in w}$, we get a contradiction with the choice of $N(f,\varepsilon)$.
		
		2)	$d^N f(b,a_{<2N-1})+\varepsilon<f(x,b)^p$. This case is analogous to the previous, using the other definition of the median value connective.
	\end{proof}
	
\begin{cor}\label{corollary: gen stab implies definable}
All generically stable types in $S_{X/E}(\C)$ are definable.
\end{cor}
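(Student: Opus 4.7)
The plan is to read the corollary off directly from Proposition \ref{2.21}, which supplies a uniform approximation of the defining function of $p$ at any $f$ by syntactically simple CL-formulas built from $f$ and a Morley sequence.

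Fix a generically stable $p \in S_{X/E}(\mathfrak{C})$, invariant over $A$, and fix $f(x,y) \in \mathcal{F}_{X/E}$. The goal is to exhibit a CL-formula over $\mathfrak{C}$ — that is, a continuous function $\tilde\psi \colon S_{|y|}(\mathfrak{C}) \to \mathbb{R}$ — such that $\tilde\psi(\tp(b/\mathfrak{C})) = f(x,b)^p$ for every $b \in \mathfrak{C}^{|y|}$; doing this for each $f \in \mathcal{F}_{X/E}$ is precisely the definition of $p$ being definable. Fix a Morley sequence $(a_i)_{i<\omega}$ in $p$ over $A$, and for each $n\geq 1$ let $N_n := N(f,1/n)$ as supplied by Proposition \ref{proposition: characterization of gen. stab.}, and set
\[
\psi_n(y) := d^{N_n}f(y,a_{<2N_n-1}) = \med_{N_n}\bigl(f(a_i,y) : i < 2N_n-1\bigr).
\]
Each $f(a_i,y)$ is a CL-formula over $\mathfrak{C}$ (instantiation of the CL-formula $f(x,y)$ over $\emptyset$ at the parameter $a_i \in X \subseteq \mathfrak{C}^{\lambda}$), and $\med_{N_n}$ is a continuous connective, so $\psi_n$ is a CL-formula over $\mathfrak{C}$; denote its (unique) continuous representative on $S_{|y|}(\mathfrak{C})$ by $\tilde\psi_n$.

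By Proposition \ref{2.21}, $|f(x,b)^p - \psi_n(b)| \leq 1/n$ for every $b \in \mathfrak{C}^{|y|}$, and hence $|\psi_n(b) - \psi_m(b)| \leq 1/n + 1/m$ uniformly in $b$. Since realized (principal) types are dense in $S_{|y|}(\mathfrak{C})$ by saturation of $\mathfrak{C}$, this estimate lifts by continuity to $\|\tilde\psi_n - \tilde\psi_m\|_{\infty} \leq 1/n + 1/m$ on the compact Hausdorff space $S_{|y|}(\mathfrak{C})$. The Banach space $C(S_{|y|}(\mathfrak{C}),\mathbb{R})$ is complete in the sup norm, so $(\tilde\psi_n)$ converges uniformly to a continuous $\tilde\psi$, which is by definition a CL-formula over $\mathfrak{C}$. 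Passing to the limit in $|f(x,b)^p - \psi_n(b)| \leq 1/n$ gives $\tilde\psi(\tp(b/\mathfrak{C})) = f(x,b)^p$ for every $b \in \mathfrak{C}^{|y|}$, so $\tilde\psi$ is the desired defining formula for the $f$-part of $p$. Since $f$ was arbitrary, $p$ is definable.

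The only mildly delicate point is the passage from a uniform estimate on $\mathfrak{C}^{|y|}$ to one on the whole type space $S_{|y|}(\mathfrak{C})$, which relies on density of realized types in the compactum; beyond that, the argument is a routine unwinding of definitions combined with completeness of the Banach space of continuous functions on a compactum.
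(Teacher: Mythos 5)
Your proof is correct and takes essentially the same route as the paper: both apply Proposition \ref{2.21} to the median formulas $d^{N(f,\varepsilon)}f$ evaluated along a fixed Morley sequence and then pass to the limit to obtain the $f$-definition. The only difference is in packaging: the paper realizes the limit syntactically as the forced limit of \cite[Definitions 3.6, 3.8 and Lemma 3.7]{MR2657678}, whereas you take the uniform limit directly in $C(S_{|y|}(\mathfrak{C}),\mathbb{R})$, which amounts to the same thing.
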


\begin{proof}
Consider any generically stable (over $A$) $p \in S_{X/E}(\C)$ and $f(x,y)\in \mathcal{F}_{X/E}$. Let $(a_i/E)_{i<\omega}$ be  a Morley sequence in $p$ over $A$.
Define a CL-formula $df(y,z)$ to be the forced limit of the sequence $(d^{N(f,2^{-n})} f(y,x_{<2N(f,2^{-n})-1}))_{n<\omega}$ (see Definitions 3.6 and 3.8 in \cite{MR2657678}). By the last proposition and \cite[Lemma 3.7]{MR2657678}, we get that $df(y,(a_{i})_{i<\omega})$ 
 is the $f$-definition of $p$
\end{proof}

Let $M \prec \C$ (small), $f \in \mathcal{F}_{X/E}$, $p \in S_f(M)$, and $q \in S_X(\C)$. It is clear what it means that $q$ extends $p$, namely: for $d \models q$ and for all $b \in M^{|y|}$, $f(x,b)^p = f(d,b)$ (where $f$ is canonically extended to a bigger monster model to which $d$ belongs). 
This is equivalent to saying that the partial type over $M$ defining $\{d \in \C^{|x|}: f(d,b) =f(x,b)^p \text{ for all } b \in M^{|y|}\}$ is contained in $q$. 
Thus, using the well-known fact that each partial type over $M$ (even in infinitely many variables) extends to a global coheir over $M$ (i.e. global type finitely satisfiable in $M$), we have:

\begin{fact}\label{fact: instead of BY}
Every type $p \in S_f(M)$ has an extension to a global type $q \in S_X(\C)$ finitely satisfiable in $M$.
\end{fact}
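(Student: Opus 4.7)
The strategy is to reduce the statement to the classical first-order result that complete types over a model extend to global coheirs. Rather than develop continuous-logic coheir theory directly, I would realize $p$ by an element of $\C$ and then work with its ordinary first-order type.

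First, pick a realization: since $p \in S_f(M)$ is by definition $\tp_f(a/M)$ for some $a$, and $\C$ is $\kappa$-saturated with $M$ small, there exists $a \in X \subseteq \C^{\lambda}$ with $\tp_f(a/M) = p$. Let $p' := \tp(a/M) \in S_X(M)$ be the complete first-order type of $a$ over $M$ in the $\lambda$-tuple $x$. By construction $p'$ extends $p$ in the CL sense: whenever $d \models p'$ and $b \in M^{|y|}$, we have $f(d,b) = f(a,b) = f(x,b)^p$.

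Second, extend $p'$ to a global coheir: $p'$ is automatically finitely satisfiable in $M$, because for every first-order formula $\varphi(x) \in p'$ (which involves only finitely many of the variables and finitely many parameters from $M$), $\C \models \exists x \, \varphi(x)$ is witnessed by $a$, hence $M \models \exists x \, \varphi(x)$ by elementarity. A standard Zorn's lemma argument applied to the family of partial types over $\C$ extending $p'$ and finitely satisfiable in $M$ then yields a complete $q \in S_X(\C)$ extending $p'$ and finitely satisfiable in $M$. Since $q$ extends $p'$, it extends $p$.

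The plan has no real obstacle: the only potentially delicate point is that $x$ may be an infinite tuple of length $\lambda$, but the Zorn argument is unaffected since individual formulas remain finitary. A shorter route, closer to the hint in the paper, is to work directly with the partial type $\pi(x)$ over $M$ expressing ``$x \in X$'' together with $f(x,b) = f(x,b)^p$ for each $b \in M^{|y|}$; this $\pi$ is finitely satisfiable in $M$ by the same elementarity argument (any realization of $p$ in $\C$ realizes every finite subtype of $\pi$), and then one extends $\pi$ to a global coheir by the same standard fact.
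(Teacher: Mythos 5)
Your argument is correct and is essentially the paper's: the paper likewise observes that ``$q$ extends $p$'' amounts to containing the partial type over $M$ given by $x\in X$ together with $f(x,b)=f(x,b)^p$ for $b\in M^{|y|}$, and then invokes the standard fact that any partial type over a model (even in infinitely many variables) extends to a global type finitely satisfiable in that model. Your first route, passing through a realization $a$ and the complete type $\tp(a/M)$, is just a harmless intermediate completion of the same coheir-extension argument, and your ``shorter route'' is verbatim the paper's proof.
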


Finally, we present the proof of the main result of this section.

\begin{proof}[Proof of Theorem \ref{equivteor}] $(1)\Leftrightarrow (2)\Leftrightarrow (3)$ is a part of Corollary \ref{2.8}. 
	
	$(1)\Rightarrow (4)$ Suppose that the sequence $(a_i)_{i<\omega}$ in $X/E$ is a counter-example. Without loss of generality we can replace $\omega$ by $\mathbb{Q}$. Then, there are rational numbers $i_0<\dots <i_{n-1}$ and a natural number $j<n-1$ such that $$\tp(a_{i_j},a_{i_{j+1}}/A)\neq \tp(a_{i_{j+1}},a_{i_j}/A),$$ where $A$ is the set of all $a_{i_k}$ for $k<n$ such that $j\neq k \neq j+1$. Choose any rationals $l_0<l_1<\dots$ in the interval $(i_j,i_{j+1})$. Let $b_i=a_{l_i}$ for $i<\omega$. Then, the sequence $(b_i)_{i<\omega}$ is $A$-indiscernible and $\tp(b_i,b_j/A)\neq \tp(b_j,b_i/A)$ for all $i<j<\omega$. This contradicts the stability of $X/E$.
	
	$(4)\Rightarrow (1)$ Assume that $X/E$ is unstable and $(4)$ holds. Since $X/E$ is unstable, there exists an indiscernible sequence $(a_i/E,b_i)_{i\in\mathbb{Z}}$ such that $\tp(a_i/E,b_j)\neq \tp(a_j/E,b_i)$ for $i<j\in \mathbb{Z}$. The sequence $(a_i/E)_{i\in\mathbb{Z}}$ is totally indiscernible and the sets $\{i\in\mathbb{Z}: \tp(a_i/E,b_0)=\tp(a_1/E,b_0)\}$ and $\{i\in\mathbb{Z}: \tp(a_i/E,b_0)=\tp(a_{-1}/E,b_0)\}$ are both infinite, contradicting Lemma \ref{fincofin} and the assumption that $X/E$ has NIP.
	
	$(1)\Rightarrow (5)$ Let $p\in S_{X/E}(\C)$ be invariant (over some A) but not generically stable. Then, there exists a Morley sequence $(a_i/E)_{i\in \omega + \omega}$ in $p$ over $A$, $\varepsilon>0$, $r\in \mathbb{R}$, $s\leq r-\varepsilon$, $f(x,y)\in \mathcal{F}_{X/E}$, and $b \in \C^{|y|}$ such that the sets $ \{i\in \omega + \omega: f(a_i,b)\leq s\} $ and $ \{i\in \omega + \omega: f(a_i,b)\geq r\}$ are both infinite. There are two possible cases: either there are infinitely many alternations, or after removing a finite number of elements $a_i$: 
	$$ (f(a_i,b)\leq s \iff i<\omega) \; \text{ or } \; (f(a_i,b)\geq r \iff i<\omega).$$
By the indiscernibility of $(a_i/E)_{i \in \omega +\omega}$, Ramsey's theorem and compactness, in each case we get a contradiction with the stability of $f$.
	
	$(5)\Rightarrow (2)$ Consider any $f\in \mathcal{F}_{X/E}$ and $p\in S_{f}(M)$. By Fact \ref{fact: instead of BY}, choose $q'\in S_{X}(\mathfrak{C})$ extending $p$ which is a coheir over $M$; let $q \in S_{X/E}(\C)$ be induced by $q'$. Being a coheir over $M$, $q'$ is $M$-invariant; so $q$ is $M$-invariant, hence generically stable by (5). By Corollary \ref{corollary: gen stab implies definable}, $q$ is definable. Denote the $f$-definition of $q$ by $\psi$. Since $q$ is $M$-invariant, so is the CL-formula $\psi$. Therefore, $\psi$ is definable over $M$ (i.e. a CL-formula over $M$). Hence, $p$ is definable by the CL-formula $\psi$.
	\end{proof}

	\section{Distal hyperimaginary sequences}\label{section: distal theories}

The goal of this section is to prove Proposition \ref{proposition: preservation of distality} and deduce Proposition \ref{proposition: distal implies bdd}, which in turn confirms the prediction from \cite{MR3796277} that in a distal theory $G^{st}=G^{00}$.


We work in a monster model $\C$ of a complete, first order theory $T$ with NIP.	 This section is based on \cite{MR3001548}, in particular the next two definitions are from there.

\begin{defin}
    For any indiscernible sequence $I$, if $I = I_1 + I_2$ (the concatenation of $I_1$ and $I_2$), we say that $\mathfrak{c}=(I_1, I_2)$ is a {\em cut} of $I$.
    
    We write $(I_1',I_2') \unlhd (I_1,I_2)$ if $I_1'$ is an end segment of $I_1$ and $I_2'$ an initial
segment of $I_2$.
    
    If $J \subset I$ is a convex subsequence, a cut $\mathfrak{c} = (I_1, I_2)$ is said to be {\em interior to $J$} if $I_1 \cap J$ and $I_2 \cap  J$ are infinite.

    A cut is {\em Dedekind} if both $I_1$ and $I^*_2$  ($I_2$ with the reversed order) have infinite cofinality.
    
    A {\em polarized cut} is a pair $(\mathfrak{c},\varepsilon)$, where $\mathfrak{c}$ is a cut $(I_1,I_2)$ and
$\varepsilon \in \{1,2\}$ is such that $I_\varepsilon$ is infinite.

    If $\mathfrak{c} = (I_1, I_2)$ is a cut, we say that a tuple $b$ {\em fills} $\mathfrak{c}$ if $I_1 + b + I_2$ is indiscernible.
\end{defin}

Sometimes, if it is clear that the tuple $b$ fills some cut $\mathfrak{c}=(I_1,I_2)$ of $I$, we will write $I\cup \{b\}$ instead of $I_1+b+I_2$. And similarly, in the case of two elements $a,b$ filling respectively distinct cuts $\mathfrak{c}_1, \mathfrak{c}_2$, abusing notation, we will  write $I \cup \{a\} \cup \{b\}$ for the associated concatenation.

\begin{defin}\label{definition: distality}
    A dense indiscernible sequence $I$ is {\em distal} if for any distinct Dedekind cuts $\mathfrak{c}_1,\mathfrak{c}_2$, if $a$ fills $\mathfrak{c}_1$ and $b$ fills $\mathfrak{c}_2$, then $I \cup\{a\} \cup \{b\}$ is indiscernible.
    
    The theory $T$ is {\em distal} if all dense indiscernible sequences (of tuples from the home sort) are distal.
 
    We say that {\em $T^{heq}$ is distal} if all dense indiscernible sequences $(a_i/E)_{i \in \mathcal{I}}$ of hyperimaginaries (where $E$ is $\emptyset$-$\bigwedge$-definable) are distal. 
\end{defin}
	
Let $E$ be a $\emptyset$-$\bigwedge$-definable equivalence relation on $\C^\lambda$, and let $\pi_E: \C^\lambda \to \C^\lambda /E$ be the quotient map. 

The next lemma is a variant of \cite[Lemma 2.8]{MR3001548} for hyperimaginaries.

	\begin{lema}\label{2.8'}
		Let $I=(a_i/E)_{i\in \mathcal{I}}$ be a dense indiscernible sequence and $A\subset \C$ a (small) set of parameters. Let $(\mathfrak{c}_i)_{i<\alpha}$ be a sequence of pairwise distinct Dedekind cuts in $I$. For each $i<\alpha$ let $d_i$ fill the cut $\mathfrak{c_i}$. Fix a polarization of each $\mathfrak{c_i}$, $i < \alpha$. Then there are $(d_i')_{i<\alpha}$ satisfying $(d_i')_{i<\alpha}\equiv_I (d_i)_{i<\alpha}$  such that for every formula  $\theta$ with parameters from $A$ and $i<\alpha$: 
		if $$\pi_E^{-1}(d_i')\subseteq 
		\theta(\mathfrak{C}),$$
		then $$ \pi_E^{-1}(a_j/E)\not\subseteq \neg \theta(\mathfrak{C}) $$
		for $a_j/E$ from a co-final fragment of  the left part of $\mathfrak{c}_i$ if $\mathfrak{c}_i$ is left-polarized, or from a co-initial fragment of the right part of $\mathfrak{c}_i$ if $\mathfrak{c}_i$ is right-polarized.
	\end{lema}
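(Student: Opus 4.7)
The plan is to adapt Simon's proof of \cite[Lemma~2.8]{MR3001548} to the hyperimaginary setting via a direct finite-consistency argument, at the level of representatives in $\mathfrak{C}$. By compactness and the $\bigwedge$-definability of $E$, it suffices to prove the conclusion for each finite subset of cuts $\{\mathfrak{c}_{i_1},\dots,\mathfrak{c}_{i_n}\}$ and each finite family $\{\theta_1,\dots,\theta_m\}\subseteq L(A)$; the full statement then follows by standard compactness gluing. Call a pair $(i_k,l)$ \emph{bad} if the set of $j$'s on the polarized side of $\mathfrak{c}_{i_k}$ with $\pi_E^{-1}(a_j/E)\subseteq\neg\theta_l(\mathfrak{C})$ is cofinal there; for non-bad pairs the conclusion is automatic regardless of the choice of $d'_{i_k}$. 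Thus, with $L_k:=\{l:(i_k,l)\text{ is bad}\}$, the task reduces to producing tuples $\tilde d'_{i_k}\in\mathfrak{C}$ such that, setting $d'_{i_k}:=\tilde d'_{i_k}/E$, one has $(d'_{i_k})_{k\le n}\equiv_I(d_{i_k})_{k\le n}$ and $\neg\theta_l(\tilde d'_{i_k})$ for all $k\le n$ and $l\in L_k$.

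I would then consider the partial type $q(\tilde x_{i_1},\dots,\tilde x_{i_n})$ over $I\cup A$ expressing the joint hyperimaginary type equality $(\tilde x_{i_k}/E)_k\equiv_I(d_{i_k})_k$ together with the constraints $\neg\theta_l(\tilde x_{i_k})$ for $k\le n$ and $l\in L_k$, and verify finite consistency. Given a finite fragment involving a finite tuple $\bar\imath$ of elements of $I$ and finitely many bad formulas, for each $k\le n$ I would choose $j_k^*$ on the polarized side of $\mathfrak{c}_{i_k}$ close enough to $\mathfrak{c}_{i_k}$ so that (a)~$\pi_E^{-1}(a_{j_k^*}/E)\subseteq\bigcap_{l\in L_k}\neg\theta_l(\mathfrak{C})$, which is possible because $L_k$ is finite and each bad pair yields cofinally many such indices, and (b)~the position of $j_k^*$ relative to $\bar\imath$ mirrors the position of $\mathfrak{c}_{i_k}$ relative to $\bar\imath$ (every element of $\bar\imath$ lying strictly below $\mathfrak{c}_{i_k}$ also lies strictly below $j_k^*$, and symmetrically for those above), achievable by the density of $I$ at each cut. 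Any home-sort representative $\tilde a_{j_k^*}$ of $a_{j_k^*}/E$ then witnesses the $\neg\theta_l$-clauses by (a), and the finite fragment of the joint hyperimaginary type is realized by $(a_{j_k^*}/E)_k$ thanks to (b) together with the hyperimaginary indiscernibility of each $I\cup\{d_{i_k}\}$.

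The main obstacle is checking that the joint hyperimaginary type on the finite fragment is realized by $(a_{j_k^*}/E)_k$: since the full joint sequence $I\cup\{d_{i_1},\dots,d_{i_n}\}$ need not be indiscernible at the hyperimaginary level (only each $I\cup\{d_{i_k}\}$ is), this is not a direct consequence of the hypotheses. The resolution is to perform the replacement of $d_{i_k}$ by $a_{j_k^*}/E$ iteratively, one $k$ at a time, treating the already-replaced $a_{j_{k'}^*}/E$ (for $k'<k$) and the not-yet-replaced $d_{i_{k'}}$ (for $k'>k$) together with $\bar\imath$ as a finite parameter set, and invoking the hyperimaginary indiscernibility of $I\cup\{d_{i_k}\}$ over this parameter set in a convex neighborhood of $\mathfrak{c}_{i_k}$; condition (b) ensures that $j_k^*$ lies inside such a neighborhood, free of the other cuts and of $\bar\imath$, so that $I$ is indiscernible over the current parameter set near $\mathfrak{c}_{i_k}$. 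Once finite consistency of $q$ is established, compactness produces $(d'_{i_k})_{k\le n}$ satisfying the required conditions, and letting $(\mathfrak{c}_{i_k})$ and $\{\theta_l\}$ range over all finite subsets yields the full conclusion of the lemma.
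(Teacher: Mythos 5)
Your argument breaks at exactly the point you flag as ``the main obstacle''. The hypothesis gives indiscernibility of each $I\cup\{d_{i_k}\}$ over $\emptyset$ only; it gives no indiscernibility of $I$ (or of any convex piece of it near $\mathfrak{c}_{i_k}$) over a parameter set containing the \emph{other} fillers $d_{i_{k'}}$ or the previously chosen $a_{j_{k'}^*}/E$, and the sentence ``so that $I$ is indiscernible over the current parameter set near $\mathfrak{c}_{i_k}$'' is simply not justified -- indeed, whether filling one cut leaves the sequence indiscernible relative to elements at other cuts is essentially the distality phenomenon the whole section is about, and it fails in general NIP theories. Concretely, let $E$ be equality in the theory of an infinite set, $I$ a sequence of distinct elements, $\mathfrak{c}_{i_1}\neq\mathfrak{c}_{i_2}$ two Dedekind cuts, and $d_{i_1}=d_{i_2}=d$ a single fresh element (it fills both cuts). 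The finite fragments of the joint condition $(\tilde x_{i_1}/E,\tilde x_{i_2}/E)\equiv_I(d_{i_1},d_{i_2})$ include $x_{i_1}=x_{i_2}$, which no pair $(a_{j_1^*},a_{j_2^*})$ of sequence elements sitting near the two distinct cuts can realize; and the iterative step fails for the same reason, since $\tp(d/d\,a_{\bar{\imath}})\neq\tp(a_{j_1^*}/d\,a_{\bar{\imath}})$. So your finite-consistency verification by witnesses taken from the sequence collapses as soon as $\alpha\geq 2$ (for a single cut it does work, and there no NIP is needed). Note also that your proof never uses NIP, whereas the statement is proved in the paper as a NIP fact: the paper argues by contradiction, extracts formulas $\varphi_i\in C_i$ witnessing inconsistency of $\tp((d_i)_{i<\alpha}/I)\cup\bigcup_i C_i(x_i)$ over a finite $I_0$, transfers the \emph{joint} type of the fillers to other cuts by an automorphism of $\C$ moving a finite configuration of the sequence (so the new fillers are automorphic images of the old ones, not sequence elements), and then iterates to build an indiscernible sequence along which $\varphi_0$ alternates infinitely, contradicting NIP. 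The cross-cut interaction of the joint type, which your replacement step assumes away, is precisely what this NIP argument is needed for.

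A secondary, fixable issue: with ``bad'' defined by \emph{cofinally} many $j$ satisfying $\pi_E^{-1}(a_j/E)\subseteq\neg\theta_l(\C)$, step (a) is unjustified, since finitely many cofinal subsets of the polarized side of a cut may have empty intersection, so there need be no single $j_k^*$ with $\pi_E^{-1}(a_{j_k^*}/E)\subseteq\bigcap_{l\in L_k}\neg\theta_l(\C)$. The notion the lemma actually requires (and which the paper encodes in the sets $C_i$, closed under conjunction) is the end-segment version: $(i_k,l)$ is bad when $\pi_E^{-1}(a_j/E)\subseteq\neg\theta_l(\C)$ for \emph{all} $j$ in some end segment on the polarized side; then finite intersections are again end segments and (a) is fine. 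Repairing this, however, does not touch the main gap above, which is fatal to the strategy of witnessing the joint type by elements of $I$ itself.
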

	\begin{proof}

To simplify notation, let all the cuts $\mathfrak{c}_i$ be left-polarized. 
The negation of the conclusion says that for every $(d_i')_{i<\alpha} \equiv_{I} (d_i)$ there exists  $i< \alpha$ and a formula $\theta(x)$ over $A$ such that $$\pi_E^{-1}(d_i') \subseteq \theta(\mathfrak{C})$$ and $$ \pi_E^{-1}(a_j/E)\subseteq \neg \theta(\mathfrak{C}) $$ for all $a_j/E$ in some end segment of $\mathfrak{c}_i$. For $i<\alpha$ put $C_i(x_i):=\{\varphi(x_i) \in L(A) :  \pi_E^{-1}(a_j/E)\subseteq \varphi(\C) \text{ for all $a_j/E$ in some end segment of } \mathfrak{c}_i  \}$. Note that these sets are closed under conjunction. 
The negation of the conclusion is equivalent to
$$(*) \;\;\;\;\;\;\;\;\; \tp((d_i)_{i<\alpha})/I)\cup \bigcup_{i<\alpha}C_i(x_i) \text{ is inconsistent,}$$ 
which in turn is equivalent to the existence of a finite subset $J$ of $\alpha$ such that $\tp((d_i)_{i\in J})/I)\cup \bigcup_{i \in J}C_i(x_i)$ inconsistent. Therefore, without loss of generality, $\alpha<\omega$. We will show a detailed proof for $\alpha=2$; the proof for an arbitrary $\alpha \in \omega$ is the same.

Suppose the conclusion fails. By $(*)$, choose a finite subsequence $I_0$ of $I$ so that $\tp(d_0,d_1/I_0)\cup C_0(x_0)\cup C_1(x_1)$ is inconsistent, and let $\varphi_0(x_0)\in C_0(x_0)$ and  $\varphi_1(x_1) \in C_1(x_1)$ be formulas witnessing it.  Now, for $i \in \{0,1\}$ take $(J_i,J_i')\unlhd \mathfrak{c}_i$ such that $\pi_E^{-1}(a_j/E)\subseteq \varphi_i(\mathfrak{C})$ for all $a_j/E\in J_i$, $J_i\cup J_i'$ contains no element of $I_0$, and $(J_0 \cup J_0') \cap (J_1 \cup J_1') =\emptyset$.
		
		\begin{claim}
			For every two cuts $\mathfrak{d}_0,\mathfrak{d}_1$ in $I$ respectively interior to $J_0,J_1$ we can find hyperimaginaries $e_0$ filling $\mathfrak{d}_0$ and $e_1$ filling $\mathfrak{d}_1$ such that $\tp(e_0,e_1/I_0)=\tp(d_0,d_1/I_0)$.
		\end{claim}
		
\begin{proof}[Proof of claim]
			Consider any finite  $K\subseteq I$. For $i \in \{0,1\}$, the cut $\mathfrak{d}_i$ decomposes $K$ into $L_i^- +L_i^+$. It is enough to find $e_0,e_1$ such that \begin{align*}
				\tp(L_1^-,e_0,L_1^+)&\subset \EM(I),\\
				\tp(L_2^-,e_1,L_2^+)&\subset \EM(I),\\
				\tp(e_0,e_1/I_0)&=\tp(d_0,d_1/I_0),
			\end{align*}
where $\EM(I)$ denotes the Erenfeucht-Mostowski type of $I$.
			
			We can decompose $K$ into sequences $K_0\subseteq J_0+J_0'$, $K_1\subseteq J_1+J_1'$, and $K_2\subseteq I \setminus ((J_0+J_0') \cup (J_1+J_1'))$. Next, we construct new finite sequences $K_0'$, $K_1'$, $K_2'$, and $K'$ in the following way: $K_2'=K_2$; for every element $a\in K_0$ we take $a'\in J_0 \cup J_0'$ such that $a'$ is in the same relative position to $\mathfrak{c}_0$ as $a$ was to $\mathfrak{d}_0$ and also preserving the order between elements, and we define $K_0'$ to be the constructed sequence of the $a'$'s; for $K_1'$ we proceed in an analogous manner; finally, $K':=K_0'\cup K_1'\cup K_2'$ written as a sequence in an obvious order provided by the construction. By the indiscernibility of the sequence $I$, there is $\sigma\in \aut(\C)$ such that $\sigma(KI_0)=K'I_0$. The elements $e_0:=\sigma^{-1}(d_0)$ and $e_1:=\sigma^{-1}(d_1)$ satisfy the desired conditions.
		\end{proof}

		Fix $\mathfrak{d}_0,\mathfrak{d}_1$ as in the claim, and choose $e_0=:e^0_0$ and $e_1=:e^0_1$ provided by the claim. By the choice of $\varphi_i(x)$, $\pi_E^{-1}(e^0_0)\subseteq \neg \varphi_0(\mathfrak{C})$ or $\pi_E^{-1}(e^0_1)\subseteq \neg \varphi_1(\mathfrak{C})$. For example, $\pi_E^{-1}(e^0_0)\subseteq \neg \varphi_0(\mathfrak{C})$. Set $I^0:=I\cup \{e^0_0\}$; this is an indiscernible sequence. 
Let $J_0^0$ be an end segment of $J_0$ not containing  $\mathfrak{d}_0$, and $J_1^0:=J_1$. 
By the same argument as in the above claim, we get
		
		\begin{claim}
		For every two cuts $\mathfrak{d}^1_0,\mathfrak{d}^1_1$ in $I^0$ respectively interior to $J_0^0,J_1^0$ we can find hyperimaginaries $e^1_0$ filling $\mathfrak{d}^1_0$ and $e_1^1$ filling $\mathfrak{d}^1_1$ (seen as cuts in $I^0$) such that $\tp(e^1_0,e^1_1/I_0)=\tp(d_0,d_1/I_0)$.
		\end{claim} 

Fix $\mathfrak{d}^1_0,\mathfrak{d}^1_1$ as in the claim, and choose $e^1_0, e^1_1$ provided by the claim. Then $\pi_E^{-1}(e^1_0)\subseteq \neg \varphi_0(\mathfrak{C})$ or $\pi_E^{-1}(e^1_1)\subseteq \neg \varphi_1(\mathfrak{C})$. For example,  $\pi_E^{-1}(e^1_1)\subseteq \neg \varphi_1(\mathfrak{C})$. Set $I^1 := I^0 \cup \{e^1_1\}$; this is again an indiscernible sequence. 
Let $J_0^1: =J_0^0$, and $J_1^1$ be an end segment of $J_1^0$ not containing $\mathfrak{d}_1^1$. 

Iterating this process $\omega$ times, we get a sequence $(\varepsilon_k)_{k<\omega}$ of 0's and 1's and a sequence of hyperimaginaries $(e^k_{\varepsilon_k})_{k<\omega}$ such that $I\cup \{ e^k_{\varepsilon_k}\}_{k<\omega}$ is indiscernible, the  $e^k_{\varepsilon_k}$'s with $\varepsilon_k=0$ fill pairwise distinct cuts in $J_0$, the  $e^k_{\varepsilon_k}$'s with $\varepsilon_k=1$ fill pairwise distinct cuts in $J_1$, and $\pi_E^{-1}(e_{\varepsilon_k}^k)\subseteq \neg \varphi_{\varepsilon_k}(\mathfrak{C})$ for all $k<\omega$. W.l.o.g. $\varepsilon_k=0$ for all $k<\omega$.
		
		Finally, by Ramsey's theorem and compactness, we can find an indiscernible sequence of representatives of the hyperimaginaries from the indiscernible sequence $J_0\cup \{ e_0^k\}_{k<\omega}$. In this way, we have produced an indiscernible sequence for which $\varphi_0(x_0)$ has infinite alternation rank, which contradicts NIP.
	\end{proof}

\begin{teor}\label{teordistal}
If $(a_i)_{i \in \mathcal{I}}$ is a (dense) distal sequence of tuples from $\C^\lambda$, then $(a_i/E)_{i \in \mathcal{I}}$ is a distal sequence of hyperimaginaries.	Thus, if $T$ is distal, then $T^{heq}$ is distal.
\end{teor}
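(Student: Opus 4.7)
The plan is to reduce the hyperimaginary distality claim to the distality of $(a_i)_{i\in\mathcal{I}}$ in the home sort, with Lemma~\ref{2.8'} providing the compactness mechanism that lifts hyperimaginary fillers to home-sort fillers.

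Fix distinct Dedekind cuts $\mathfrak{c}_1,\mathfrak{c}_2$ in $I:=(a_i/E)_{i\in\mathcal{I}}$ and hyperimaginary fillers $d_1$ of $\mathfrak{c}_1$, $d_2$ of $\mathfrak{c}_2$; I need to show that $I\cup\{d_1\}\cup\{d_2\}$ is indiscernible. Fixing any polarizations and applying Lemma~\ref{2.8'} with $A$ the union of the representatives $a_i$, I replace $(d_1,d_2)$ by a pair $(d_1',d_2')\equiv_I(d_1,d_2)$ satisfying the conclusion of the lemma; via the witnessing automorphism over $I$ it suffices to show that $I\cup\{d_1'\}\cup\{d_2'\}$ is indiscernible. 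The central claim is then that $d_1'$ and $d_2'$ admit representatives $\tilde d_1\in\pi_E^{-1}(d_1')$ and $\tilde d_2\in\pi_E^{-1}(d_2')$ filling, respectively, $\mathfrak{c}_1$ and $\mathfrak{c}_2$ in the home-sort sequence $(a_i)_{i\in\mathcal{I}}$. Granted this claim, distality of $(a_i)$ at the two distinct Dedekind cuts gives indiscernibility of $(a_i)\cup\{\tilde d_1\}\cup\{\tilde d_2\}$ in the home sort, which descends after quotienting by $E$ to the required hyperimaginary indiscernibility.

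Establishing the claim is the main obstacle. By compactness it reduces, for each $i\in\{1,2\}$, to showing that every finite conjunction $\Phi(y)$ of formulas $\varphi(\bar a_{F_-},y,\bar a_{F_+})$ with $\varphi\in\EM((a_k)_{k\in\mathcal{I}})$ and positions bracketing $\mathfrak{c}_i$ admits a solution in $\pi_E^{-1}(d_i')$. Otherwise $\pi_E^{-1}(d_i')\subseteq\neg\Phi(\C)$ for some such $\Phi$, and Lemma~\ref{2.8'} applied to $\theta:=\neg\Phi$ produces $a_j/E$ in a cofinal fragment of the appropriate side of $\mathfrak{c}_i$ together with a representative $a_j'\mathrel{E}a_j$ satisfying $\neg\Phi(a_j')$. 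Since $a_j$ itself can be placed as the interpolant in each conjunct of $\Phi$, indiscernibility of $(a_k)$ in the home sort forces $\Phi(a_j)$. Converting the resulting discrepancy between $\Phi(a_j)$ and $\neg\Phi(a_j')$ for the $E$-equivalent pair $a_j,a_j'$ into a contradiction is the delicate step; I expect it to follow by iterating Lemma~\ref{2.8'} with $a_j'$ added to $A$, combined with distality of $(a_k)$ applied to an auxiliary cut near $a_j$, in analogy with the iterative construction appearing in the proof of Lemma~\ref{2.8'}.

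The second sentence of the theorem follows immediately from the first: every dense indiscernible sequence of hyperimaginaries arises as $(a_i/E)_{i\in\mathcal{I}}$ for some $\emptyset$-$\bigwedge$-definable $E$, and by Ramsey plus compactness the representatives $a_i$ can be chosen to form a dense indiscernible sequence in the home sort, which is distal by hypothesis on $T$.
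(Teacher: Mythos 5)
Your overall architecture matches the paper's proof exactly: apply Lemma \ref{2.8'} to replace $(d_1,d_2)$ by $(d_1',d_2')\equiv_I(d_1,d_2)$, lift $d_1',d_2'$ to home-sort representatives filling the induced cuts of $(a_i)_{i\in\mathcal{I}}$, invoke distality of the home-sort sequence, and push the resulting indiscernibility down through $\pi_E$; the deduction of the second sentence from the first is also the same. However, the heart of the argument --- the Claim that each $d_i'$ has a representative filling $\mathfrak{c}_i$ --- is left genuinely open. At the point where you obtain $\Phi(a_j)$ (by indiscernibility) and $\neg\Phi(a_j')$ for some $a_j'\mathrel{E}a_j$ (from Lemma \ref{2.8'} with $\theta:=\neg\Phi$), there is simply no contradiction: $\Phi$ is an arbitrary formula over the $a_k$'s and is in no way $E$-invariant, so an $E$-class may perfectly well meet both $\Phi(\C)$ and $\neg\Phi(\C)$. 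Your proposed repair --- iterating Lemma \ref{2.8'} with $a_j'$ added to $A$ and appealing to distality at an auxiliary cut --- does not obviously go anywhere: the conclusion of Lemma \ref{2.8'} only concerns formulas over the parameter set fixed \emph{before} choosing $d_1',d_2'$, so enlarging $A$ afterwards forces you to re-run the lemma and discard the elements you have already produced, and distality of $(a_k)$ gives no handle on the single exceptional representative $a_j'$.

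The paper resolves exactly this difficulty by not insisting on exact membership in the $E$-class at the finite stage. Writing $E=\dcap_{t\in\mathcal{T}}R_t$, the compactness target becomes: for each single $t$ and each $\varphi\in\EM(I)$ with fixed indices, $\models\exists y\,(y\,R_t\,e_1'\wedge\varphi(a_{i_1},\dots,y,\dots,a_{i_n}))$ (the full partial type $\{y\,R_t\,e_1': t\in\mathcal{T}\}$ then recovers $\pi_E(y)=d_1'$ in the limit). If this fails for some $t$, one chooses $t'$ with $R_{t'}\circ R_{t'}\subseteq R_t$ and observes that the formula $\theta(x):=\neg\exists y\,(y\,R_{t'}\,x\wedge\varphi(\dots,y,\dots))$ holds on the \emph{entire} class $\pi_E^{-1}(d_1')$ --- this triangle-inequality step is precisely what substitutes for the missing $E$-invariance of your $\Phi$. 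Now Lemma \ref{2.8'} legitimately applies and yields $a'\mathrel{E}a_j$ with $\theta(a')$ for some $j$ in the cofinal fragment beyond $i_{k-1}$; but then $y:=a_j$ itself contradicts $\theta(a')$, since $a_j\,R_{t'}\,a'$ (because $E\subseteq R_{t'}$) and $\varphi(a_{i_1},\dots,a_j,\dots,a_{i_n})$ holds by indiscernibility of $I$. This closes the gap immediately, without any iteration or auxiliary use of distality inside the Claim; distality is used only afterwards, exactly where you use it.
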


\begin{proof}
The fact that the first part implies the second follows from the observation that for any indiscernible sequence of hyperimaginaries we can find an indiscernible sequence of representatives. So let us prove the first part.

Assume $I:=(a_i)_{i \in \mathcal{I}}$ is a (dense) distal sequence of tuples from $\C^\lambda$, and let $I'=(a_i/E)_{i \in \mathcal{I}}$. Present $E$ as $\dcap_{t\in\mathcal{T}}R_t$ for some $\emptyset$-definable (not necessarily equivalence) relations $R_t$. Consider any distinct Dedekind cuts $\mathfrak{c}'_1$ and $\mathfrak{c}'_2$ of $I'$, 
say $\mathfrak{c}_1'$ is on the left from $\mathfrak{c}_2'$. They partition $I'$ into $I_1'$, $I_2'$, and $I_3'$. The cuts $\mathfrak{c}'_1$ and $\mathfrak{c}'_2$ induce Dedekind cuts $\mathfrak{c}_1$ and $\mathfrak{c}_2$ of $I$ which partition $I$ into $I_1$, $I_2$, and $I_3$. Take any $d_1$ and $d_2$ filling the cuts  $\mathfrak{c}'_1$ and $\mathfrak{c}'_2$, respectively. Apply Lemma \ref{2.8'} to this data  (taking left-polarization of $\mathfrak{c}_1'$ and $\mathfrak{c}_2'$) and $A$ being the set of all coordinates of all tuples $a_i$, $i \in \mathcal{I}$. This yields $d_1'=e_1'/E$ and $d_2'=e_2'/E$ satisfying the conclusion of Lemma \ref{2.8'}.

	\begin{claim}
		For every $i \in \{1,2\}$ there is  $b_i$ filling the cut $\mathfrak{c}_i$ such that $\pi_{E}(b_i)=d_i'$.
	\end{claim}
\begin{proof}[Proof of the claim] It is enough to consider $i=1$.
	By compactness, it suffices to show that for any formula $\varphi(x_1,\dots, x_n)\in \EM(I)$, $i_1<\dots <i_{k-1}\in \mathcal{I}_1$, and $i_{k+1}<\dots <i_{n}\in \mathcal{I}_2+\mathcal{I}_3$ there is $b_1$ such that $\models \varphi(a_{i_1},\dots, a_{i_{k-1}},b_1,a_{i_{k+1}},\dots,a_{i_n})$ and $\pi_{E}(b_1)=d_1'$. By compactness, it is enough to show that for every $t\in \mathcal{T}$ 
	$$\models \exists y(yR_te_1' \wedge \varphi(a_{i_1},\dots, a_{i_{k-1}},y,a_{i_{k+1}},\dots,a_{i_n}) ).$$ 
	Assume this fails. Choosing $t'\in \mathcal{T}$ such that $R_{t'}\circ R_{t'}\subseteq R_t$, we get $$\pi_{E}^{-1}(d_1')\subseteq\neg(\exists y)( yR_{t'}x\wedge \varphi(a_{i_1},\dots, a_{i_{k-1}},y,a_{i_{k+1}},\dots,a_{i_n}) )(\mathfrak{C})=:\theta(\mathfrak{C}).$$
	By the choice of $d_1'$, for $a_j/E$ from a co-final fragment of the left part of $\mathfrak{c}_1$ we have $\pi_{E}^{-1}(a_j/E)\not\subseteq \neg\theta(\mathfrak{C})$. So, for any of these indices $j$, there is $a'E a_j$ such that $$ \models \neg\exists y ( y R_{t'} a' \wedge \varphi(a_{i_1},\dots, a_{i_{k-1}},y,a_{i_{k+1}},\dots,a_{i_n}) ).$$ 
As the set of such indices $j$ is co-final in $\mathcal{I}_1$, we can find such an index $j \in \mathcal{I}_1$ with $j>i_{k-1}$. Then $y:=a_j$ contradicts the last formula (by the indiscernibility of $I$). 	
\end{proof}

By the distality of $I$, the sequence $I_1+b_1+I_2+b_2+I_3$ is indiscernible. Hence, the sequence $I_1'+d_1'+I_2'+d_2'+I_3'$  is also indiscernible. On the other hand, by our choice of $d_1',d_2'$, we know that $d_1d_2\equiv_{I'}d_1'd_2'$. Thus, the sequence $I_1'+d_1+I_2'+d_2+I_3'$ is indiscernible, too. As $d_1,d_2$ were arbitrary, we conclude that $I'$ is a distal sequence.
\end{proof}

\begin{cor}\label{corollary: stable iff bounded}
	For a distal theory $T$, a hyperdefinable set $X/E$ is stable if and only if $E$ is a bounded equivalence relation. In particular, for a group $G$ $\bigwedge$-definable in a distal theory, $G^{st}=G^{00}$.
\end{cor}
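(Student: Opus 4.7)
The plan is to prove the two directions separately. For the easy direction, suppose $E$ is bounded. Given an $A$-indiscernible sequence $(a_i,b_i)_{i<\omega}$ with $a_i/E \in X/E$, I would stretch it (via compactness) to length $|X/E|^+$; pigeonhole on the bounded set $X/E$ forces two values $a_i/E, a_j/E$ to coincide, hence all do by indiscernibility, so the stability condition holds trivially.

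For the harder direction I argue by contradiction: assume $T$ is distal, $X/E$ is stable, and $|X/E|$ is unbounded. Using a Ramsey extraction together with compactness to stretch the index set, I produce a dense indiscernible sequence $I = (a_q/E)_{q \in \mathbb{Q}}$ in $X/E$ with pairwise distinct $E$-classes. Since $T$ is distal it is NIP, so Theorem \ref{equivteor}(4) applied to the stable set $X/E$ makes $I$ totally indiscernible, and Theorem \ref{teordistal} makes $I$ distal in the sense of $T^{heq}$.

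The central idea is to produce a single $b \in X/E$ that fills two distinct Dedekind cuts of $I$, and then apply distality. By saturation, pick $b$ filling some cut $\mathfrak{c}_1$ and $b'$ filling a different cut $\mathfrak{c}_2$. Distality of $I$ yields indiscernibility of the extended sequence $I \cup \{b\} \cup \{b'\}$ in $X/E$; applying Theorem \ref{equivteor}(4) to this extension gives that it is itself totally indiscernible, whence $\tp(b/I) = \tp(b'/I)$. Since whether an element of $X/E$ fills a given Dedekind cut of $I$ depends only on its type over $I$, it follows that $b$ also fills $\mathfrak{c}_2$. Now distality, applied to the two fillings $b,b$ of the distinct cuts $\mathfrak{c}_1 \ne \mathfrak{c}_2$, produces an indiscernible sequence $I \cup \{b\} \cup \{b\}$ in which the same element appears at two distinct positions.

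For the contradiction, I use $E = \dcap_t R_t$: by distinctness of the $a_q/E$'s and indiscernibility of $I$, there is a single $t$ with $\neg R_t(a_i,a_j)$ for all $i<j$ in $I$. But $bEb$ gives $R_t(b,b)$, and indiscernibility of the extended sequence transfers $R_t(b,b)$ (at positions $r_1<r_2$) to $R_t(a_0,a_1)$ (at positions $0<1$), contradicting distinctness. The main obstacle is recognizing that one must apply Theorem \ref{equivteor}(4) twice—both to $I$ and to the distal extension—to upgrade indiscernibility to total indiscernibility and thereby identify $\tp(b/I)$ with $\tp(b'/I)$; once that is in place, the distality argument itself is a direct application of the definition. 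Finally, the ``in particular'' statement is immediate: $G/G^{st}$ is stable, hence bounded by the main claim, so $G^{st} \supseteq G^{00}$; conversely $G/G^{00}$ is bounded, hence trivially stable, giving $G^{st} \subseteq G^{00}$.
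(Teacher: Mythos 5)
Your argument is correct and follows essentially the same route as the paper: extract a dense indiscernible sequence of pairwise distinct $E$-classes from unboundedness, combine stability (Theorem \ref{equivteor}) with distality of $T^{heq}$ (Theorem \ref{teordistal}), and derive a contradiction; the paper simply cites the standard fact that a non-constant totally indiscernible sequence is not distal, whereas you spell that step out (your version reuses stability of the extended sequence $I\cup\{b\}\cup\{b'\}$, though total indiscernibility of $I$ alone already implies that a filler of one Dedekind cut fills every Dedekind cut). The only small omission is the reduction, noted in the paper, to $X$ and $E$ being $\emptyset$-type-definable by naming parameters (distality is preserved under this), so that Theorem \ref{teordistal} applies literally.
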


\begin{proof}
	If $E$ is bounded, then $X/E$ is stable (as each indiscernible sequence in $X/E$ is constant). To prove the other implication, assume that $X/E$ is stable. 
Since distality is preserved under naming parameters, w.l.o.g. both $X$ and $E$ are $\emptyset$-type-definable. If $X/E$ is not bounded, taking a very long sequence of pairwise distinct elements of $X/E$, by extracting indiscernibles, there exists a dense indiscernible sequence of pairwise distinct elements of $X/E$. By stability and Theorem \ref{equivteor}, this sequence is totally indiscernible.  Since non-constant, totally indiscernible sequences are not distal, we get a contradiction with the distality of $T^{heq}$ (which we have by Theorem \ref{teordistal}).
\end{proof}

One could give a short direct (i.e. not using Theorem \ref{teordistal}) proof of Corollary \ref{corollary: stable iff bounded}. However, we find it very natural to see Corollary \ref{corollary: stable iff bounded} as an easy consequence of  Theorem \ref{teordistal} which in turn is a fundamental result concerning distality and hyperimaginaries.

	\section{An example of $G^{st,0}\neq G^{st}\neq G^{00}=G$}\label{section: main example}

	Our objective is to find a definable group $G$ in a NIP theory $T$ satisfying $G^{st,0}\neq G^{st}\neq G^{00}=G$. In this section, we will change the notation: the group interpreted in the monster model will be denoted by $G^*$ instead of $G$.

Consider the structure $\mathcal{M}:=(\mathbb{R},+,I)$, where $I:=[0,1]$. Let $T:=\Th(\mathcal{M})$ and $G:=(\mathbb{R},+)$. 
Let $\mathcal{M}^*=(\mathbb{R}^*,+,I^*) \succ \mathcal{M}$ be a monster model 
($\kappa$-saturated and strongly $\kappa$-homogeneous for large $\kappa$)
which expands to a monster model  $(\mathbb{R}^*,+,\leq ,1) \succ (\mathbb{R},+,\leq,1)$ (with the same $\kappa$), and $G^*:=(\R^*,+)$. Denote by $\mu$ the subgroup of infinitesimals, i.e. $\bigcap_{n \in \mathbb{N}^+} [-1/n,1/n]^*$. 


Some observations below may follow from more general statements in the literature, but we want to be self-contained and as elementary as possible in the analysis of this example.

\begin{prop}\label{proposition: NIP and unstable}
	T has NIP and is unstable.
\end{prop}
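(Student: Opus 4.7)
The plan is to treat the two assertions separately, each by an elementary observation.

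For NIP, I would note that the predicate $I$ is definable in $(\mathbb{R},+,\leq,1)$ as $\{x : 0\leq x\leq 1\}$, so $\mathcal{M}$ is a reduct of that expansion. The theory of $(\mathbb{R},+,\leq,1)$ is the theory of divisible ordered abelian groups with one named positive constant: it is o-minimal (definable subsets of the line are finite unions of intervals with endpoints of the form $m/n$, as can be seen by quantifier elimination in the language $\{+,-,\leq,1\}$ with divisibility predicates, or just by quoting that it is a reduct of the real closed field). In particular, it is NIP; since NIP passes to reducts (an IP formula of the reduct exhibits the same IP pattern in the expansion), $T$ is NIP.

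For instability, I would exhibit an explicit formula with the order property. Consider
\[
\varphi(x,y) \;:=\; y-x\in I.
\]
Pick a positive infinitesimal $a\in\mathbb{R}^{*}$, which exists by saturation (for instance any $a\in\mu\setminus\{0\}$ with $a>0$). Set $a_i:=b_i:=i\cdot a$ for $i<\omega$. Then
\[
b_j - a_i \;=\; (j-i)\cdot a,
\]
which, for $j\geq i$, is a non-negative (finite multiple of an) infinitesimal, hence lies in $[0,1]^{*}=I^{*}$; and for $j<i$ it is negative and therefore outside $I^{*}$. Thus $\varphi(a_i,b_j)$ holds if and only if $i\leq j$, so $\varphi$ has the order property and $T$ is unstable. (Equivalently, one can give finite witnesses in the standard model by taking $a_i=b_i=i/n$ for $0\leq i\leq n$ and letting $n\to\infty$, then invoking compactness.)

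Both steps are routine; the only thing to be a little careful about is to certify the reduct relationship for NIP and to produce concrete order-property witnesses for instability. No substantial obstacle is expected.
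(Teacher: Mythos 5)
Your proposal is correct and follows essentially the same route as the paper: NIP is obtained in both cases by viewing $\mathcal{M}$ as a reduct of the o-minimal structure $(\mathbb{R},+,\leq,1)$, and instability is witnessed by an explicit formula built from $I$ and translation. The only cosmetic difference is that the paper exhibits the strict order property via the definable chain of intervals $[-\varepsilon,\varepsilon]=(I-\varepsilon)\cap(I+(\varepsilon-1))$, whereas you verify the order property directly with the witnesses $a_i=b_i=i\cdot a$ (or $i/n$ in the standard model); both checks are equally routine and your verification is sound.
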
 
\begin{proof}
	The structure $\mathcal{M}$ is the reduct of the o-minimal structure $(\mathbb{R},+,\leq ,1)$, hence $T$ has NIP. 
	
 Note that for $\varepsilon\in(0,\frac{1}{2})$ we can write the interval $[-\varepsilon,\varepsilon]$ as $(I-\varepsilon)\cap (I+(\varepsilon -1))$. Hence, the formula $\varphi(x,y):= \text{``}x\in I-y \wedge x\in I+(y-1)\text{''}$ has SOP.
\end{proof}

\begin{remark}\label{1 in dcl}
	$0,1 \in \dcl^{\mathcal{M}}(\emptyset)$.
\end{remark}
\begin{proof}
$0 \in \dcl^{\mathcal{M}}(\emptyset)$ as the neutral element of $(\mathbb{R},+)$. To see that $1 \in \dcl^{\mathcal{M}}(\emptyset)$, note that $1$ is defined by the formula $\varphi(x):=\text{``}x\in I \wedge \forall y (y\in I \implies (x-y)\in I)\text{''}$.
\end{proof}

\begin{lema} \label{lemma: subgroups of G}
		\begin{enumerate}
			\item The only invariant subgroups of $G^*$ are: $\{0\}$, the subgroups of $\mathbb{Q}$, the subgroups of the form $\mu+R$ where $R$ is a subgroup of $\mathbb{R}$, and $G^*$.
			\item The only $\emptyset$-$\bigwedge$-definable subgroups of $G^*$ are $\{0\}$, $\mu$, and $G^*$.
			\item The only definable (over parameters) subgroups of $G^*$ are $\{0\}$ and $G^*$.
		\end{enumerate}
\end{lema}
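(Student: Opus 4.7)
The plan is to handle (3) directly using the o-minimal reduct, then derive (1) by classifying the $\aut(\mathcal{M}^*/\emptyset)$-orbits on $\mathbb{R}^*$, and finally obtain (2) by strengthening the case analysis of (1) via $\bigwedge$-definability plus a boundedness argument. For (3), since $\mathcal{M}^*$ is a reduct of the o-minimal structure $(\mathbb{R}^*,+,\leq,1)$, every $\mathcal{M}^*$-definable subset of $\mathbb{R}^*$ is a finite union of points and intervals. A subgroup of the torsion-free divisible group $\mathbb{R}^*$ of this form must be $\{0\}$ (a finite union of points) or all of $\mathbb{R}^*$ (any nondegenerate interval generates $\mathbb{R}^*$ under addition).

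For (1), the first step is to determine the $\aut(\mathcal{M}^*/\emptyset)$-orbits on $\mathbb{R}^*$. The underlying observation is that every $\emptyset$-definable one-variable set in $\mathcal{M}^*$ is a finite union of rational intervals (being a subset of such a set in the o-minimal reduct, in which the $\emptyset$-definable closure is $\mathbb{Q}$). Consequently: each rational is fixed; for $r\in\mathbb{R}\setminus\mathbb{Q}$ and $\eta\in\mu$, the elements $r$ and $r+\eta$ lie in the interior of the same rational intervals, hence share a type, so by strong homogeneity the orbit of $r$ equals $r+\mu$; the orbits inside $q+\mu$ for $q\in\mathbb{Q}$ are $\{q\}$, $q+\mu_{>0}$ and $q+\mu_{<0}$; and any two positive infinite elements share a type. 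Now let $H$ be an invariant subgroup, necessarily a union of orbits. If $H$ contains a positive infinite element, then it contains every positive infinite $y$; for any $a\in\mathbb{R}^*$ pick $y$ with $y-|a|>\mathbb{N}$ to get $a=(y+a)-y\in H$, so $H=G^*$ (the negative infinite case is symmetric or subsumed). Otherwise $H\subseteq V:=\bigcup_{n\in\mathbb{N}}[-n,n]$, the finite elements, and $\mathrm{st}:V\to\mathbb{R}$ is a group homomorphism with kernel $\mu$. If $\mu\subseteq H$, then $H=R+\mu$ for $R:=\mathrm{st}(H)\leq\mathbb{R}$. If $\mu\not\subseteq H$, then $H\cap\mu=\{0\}$ (the only invariant subgroups of $\mu$ are $\{0\}$ and $\mu$), so $\mathrm{st}|_H$ is injective; any irrational $r\in\mathrm{st}(H)$ would force the entire orbit $r+\mu\subseteq H$, contradicting injectivity, so $\mathrm{st}(H)\subseteq\mathbb{Q}$, and a per-rational orbit analysis (again using injectivity of $\mathrm{st}|_H$ applied to the orbits $q+\mu_{>0}$, $q+\mu_{<0}$) forces $H=\mathrm{st}(H)\leq\mathbb{Q}$.

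For (2), let $H$ be $\emptyset$-$\bigwedge$-definable and proper. By (1), $H$ is either a subgroup of $\mathbb{Q}$ or of the form $R+\mu$ with $R\leq\mathbb{R}$, and it remains to show $H$ is bounded. Writing $H=\bigcap_i\phi_i$ with $\phi_i$ $\emptyset$-definable, each $\phi_i$ is a finite union of rational intervals, and $\phi_i$ contains some positive infinite element iff one of its intervals extends to $+\infty$. Since a proper $H$ contains no positive infinite element (else $H=G^*$ by (1)), some $\phi_i$ is bounded above, and similarly below, forcing $H\subseteq[-N,N]$ for some $N\in\mathbb{Q}$. A bounded subgroup of $\mathbb{Q}$ is $\{0\}$, and for $H=R+\mu$ boundedness forces $R\leq\mathbb{R}$ to be bounded, hence $R=\{0\}$ and $H=\mu$. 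I expect the principal obstacle to be the orbit computation in (1)---particularly the equality of the orbit of an irrational real $r$ with $r+\mu$---which rests on the classification of $\emptyset$-definable one-variable $\mathcal{M}^*$-sets as finite unions of rational intervals, together with strong homogeneity of $\mathcal{M}^*$.
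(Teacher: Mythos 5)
Your treatment of (1) and (2) is essentially the paper's own argument: the paper likewise passes to the o-minimal expansion $(\mathbb{R}^*,+,\leq,1)$, uses quantifier elimination and $\dcl^{\mathcal{M}}(\emptyset)=\mathbb{Q}$ to get exactly your four classes of $1$-types (all infinite elements; $r+\mu$ for irrational real $r$; $q\pm\mu_{>0}$ for rational $q$; the rationals themselves), and then runs the same case analysis on an invariant subgroup; your bookkeeping via the standard part map is just a tidier way of organizing the paper's cases, and your boundedness step in (2) is sound as written (indeed, since the endpoints of the intervals in each $\varphi_i$ are rational, if every $\varphi_i$ contained a ray to $+\infty$ then every positive infinite element would lie in $H$, so no compactness is even needed). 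Two small points: the statement of (1) also asserts that the listed groups \emph{are} invariant, which you should note (it is immediate from your orbit description, since automorphisms fix $\mathbb{Q}$ pointwise and preserve standard parts of finite elements); and terminologically, $(\mathbb{R},+,\leq,1)$ is an \emph{expansion} of $\mathcal{M}$, not a reduct.

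The one genuine flaw is in (3). The parenthetical justification ``any nondegenerate interval generates $\mathbb{R}^*$ under addition'' is false in the nonarchimedean monster: the interval $[0,1]^*$ generates precisely the group of finite elements, a proper subgroup. The conclusion is still correct, but the argument must use that $H$ \emph{itself} (not just some interval it contains) is a finite union of points and intervals with endpoints in $\mathbb{R}^*\cup\{\pm\infty\}$. For instance: if $H\neq\{0\}$ then $H$ is infinite, hence contains a nondegenerate interval $(a,b)$, hence contains $(a,b)-(a,b)=(-\delta,\delta)$ with $\delta=b-a>0$. Let $C$ be the convex component of $0$ in $H$; by the interval decomposition, $\sup C\in\mathbb{R}^*\cup\{+\infty\}$. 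If $\sup C=v\in\mathbb{R}^*$, pick $x\in C$ with $x>v-\delta$; then $[0,x]\cup(x,x+\delta)\subseteq H$ is convex, contains $0$, and contains elements $>v$, a contradiction. So $C$ is unbounded above, and symmetrically unbounded below, whence $H\supseteq C=\mathbb{R}^*$. (The group of finite elements escapes this argument exactly because it is convex but not an interval with endpoints in $\mathbb{R}^*\cup\{\pm\infty\}$, hence not definable.) This is the content behind the paper's remark that the conclusion ``easily follows'' from o-minimality, and your proof needs this repair.
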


\begin{proof}
	$(1)$ 
By q.e. in $(\R,+, \leq,1)^*$, Remark \ref{1 in dcl}, and the fact that the order restricted to any interval $[-r,r]$ (where $r \in \mathbb{N}^+$) is $\emptyset$-definable in $\mathcal{M}$ (see Lemma \ref{proposition: interdefinability}), the following holds in $\mathcal{M}^*$: 
\begin{enumerate}[label=(\roman*)]
\item All elements $a>\mathbb{R}$ have the same type over $\emptyset$.
\item 
$\dcl^{\mathcal{M}}(\emptyset)=\mathbb{Q}$.
\item 
For any $a\in \mathbb{R}\setminus\mathbb{Q}$, $a+\mu$ is the set of all realizations of a type in $S_1(\emptyset)$.
\item For any $a \in \mathbb{Q}$,  all the elements $a+h$, where $h$ ranges over positive infinitesimals, form the set of realizations of a type in $S_1(\emptyset)$; and the same is true for all elements $a-h$. 
\end{enumerate}
This easily implies that the groups in the lemma are indeed invariant. 

For the converse, let $H$ be an invariant subgroup. If it contains some $a>\mathbb{R}$ or $a < \mathbb{R}$, then $H=G^*$ by (i). So suppose that $H \subseteq \mathbb{R} + \mu$. 
If $H$ contains an element from $a + \mu$ for some $a \in \mathbb{R} \setminus \mathbb{Q}$, then, by (iii), it contains $a + \mu$ and so $\mu$ as well; thus, $H$ is of the form $\mu + R$, where $R$ is a subgroup of $\mathbb{R}$.
If $H$ contains some element $a+h$ with $a\in \mathbb{Q}$ and $h$ a positive infinitesimal, then, by (iv),  $H$ contains all the elements of that form. Since $H$ is a group, it contains $\mu$ (because we can subtract any two elements $a+h$, $a+h'$); thus, $H$ is again of the form $\mu + R$, where $R$ is a subgroup of $\mathbb{R}$.  If $H$ contains some element of the form $a-h$ with $a\in \mathbb{Q}$ and  $h$ a positive infinitesimal, we proceed in an analogous manner. The only remaining case is that $H$ is a subgroup of $\mathbb{Q}$.

	$(2)$ 
A $\emptyset$-$\bigwedge$-definable subgroup $H$ either contains some $a>\mathbb{R}$, in which case $H=G^*$, or the type defining $H$ implies the formula $x\leq n$ for some $n\in\mathbb{N}$. This implies that $H$ is contained in $\mu$, so, by (1), either $H=\{0\}$ or $H=\mu$.

	$(3)$ 
By o-minimality of $(\R,+, \leq,1)^*$, any definable subgroup (over parameters) $H$ of $G^*$ is a finite union of points and intervals, so the conclusion easily follows. 	
\end{proof}


\begin{cor}
	$G^*=G^{*0}=G^{*00}=G^{*000}$
\end{cor}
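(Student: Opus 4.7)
The plan is to prove the single equality $G^{*000}=G^*$ and then observe that the general chain $G^{*000}\subseteq G^{*00}\subseteq G^{*0}\subseteq G^*$ forces all four groups to coincide. Hence it suffices to argue that no proper invariant subgroup of $G^*$ can have bounded index, and to read off this fact from the classification in Lemma \ref{lemma: subgroups of G}(1).

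By Lemma \ref{lemma: subgroups of G}(1), the proper invariant subgroups of $G^*$ fall into two families that I would treat separately: (i) subgroups of $\mathbb{Q}$, and (ii) subgroups of the form $\mu+R$ with $R$ a proper subgroup of $\mathbb{R}$ (the case $H=\{0\}$ is absorbed into (i)). For family (i), any $H\leq\mathbb{Q}$ satisfies $|H|\leq\aleph_0$, while $|G^*|\geq\kappa$ by saturation; thus $[G^*:H]\geq\kappa$ and is unbounded.

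For family (ii), since $\mu+R\subseteq\mu+\mathbb{R}$ for every $R\leq\mathbb{R}$, it is enough to show $[G^*:\mu+\mathbb{R}]\geq\kappa$. My approach is to use $\kappa$-saturation of $\mathcal{M}^*$ to build transfinitely a sequence $(b_i)_{i<\kappa}$ in $\R^*$ with $|b_i-b_j|>n$ for every standard $n\in\mathbb{N}$ and every $i\neq j$; at each stage the relevant partial type over the previously chosen elements is finitely satisfiable thanks to the presence of arbitrarily large elements in $\R^*$, and is therefore realized. Any such difference $b_i-b_j$ is \emph{infinite} and hence cannot be written as a standard real plus an infinitesimal, so $b_i-b_j\notin\mu+\mathbb{R}$; the $b_i$ thus represent $\kappa$ pairwise distinct cosets of $\mu+\mathbb{R}$, completing the argument. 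The only mildly delicate point I foresee is this last saturation construction, but it is entirely routine and poses no genuine obstacle.
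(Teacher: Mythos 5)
Your proposal is correct and follows essentially the same route as the paper: reduce via the chain $G^{*000}\leq G^{*00}\leq G^{*0}\leq G^*$ to showing $G^{*000}=G^*$, and then invoke Lemma \ref{lemma: subgroups of G}(1) together with the observation that every proper invariant subgroup is contained in $\mu+\mathbb{R}$, whose index is unbounded. The paper states this unboundedness without proof, while you supply the (routine and correct) saturation argument producing $\kappa$ many elements in pairwise distinct cosets of $\mu+\mathbb{R}$.
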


\begin{proof}
Since $G^*\geq G^{*0}\geq G^{*00}\geq G^{*000}$, it is enough to show that $G^{*000}=G^*$. But this follows from Lemma \ref{lemma: subgroups of G}(1), as the index of $\mathbb{R} + \mu$ in $G^*$ is unbounded.
\end{proof}

\begin{cor}
	${G^*}^{st,0}=G^*$
\end{cor}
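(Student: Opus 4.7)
The plan is direct: combine Lemma \ref{lemma: subgroups of G}(3) with the unstable-formula witness from Proposition \ref{proposition: NIP and unstable}. By definition, $G^{*st,0}$ is the intersection of all relatively definable (over parameters) subgroups $H \leq G^*$ such that the hyperdefinable quotient $G^*/H$ is stable. By Lemma \ref{lemma: subgroups of G}(3), there are only two candidates for such an $H$, namely $\{0\}$ and $G^*$ itself. The subgroup $H = G^*$ trivially gives a (one-point, hence) stable quotient, but it contributes nothing to the intersection. So the only way to get $G^{*st,0} \neq G^*$ would be if the quotient $G^*/\{0\} = G^*$ itself were stable as a hyperdefinable set.

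Thus the entire content of the statement is: $G^*$ is \emph{not} stable as a hyperdefinable set. This is immediate from the proof of Proposition \ref{proposition: NIP and unstable}: the formula $\varphi(x,y)$ given there has SOP, and both variables range over $G^* = (\R^*,+)$. So $\varphi$ can be viewed (up to the trivial identification $G^* = G^*/\{0\}$) as belonging to $\mathcal{F}_{G^*/\{0\}}$ in the sense of Section \ref{section: characterizations of stability}, and as such it is unstable. By Corollary \ref{2.4} (the equivalence of stability of $X/E$ with stability of every $f \in \mathcal{F}_{X/E}$), $G^*$ is not a stable hyperdefinable set.

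Putting the two observations together, no relatively definable proper subgroup of $G^*$ has a stable quotient, so the intersection defining $G^{*st,0}$ is vacuous and equals $G^*$. I do not anticipate any real obstacle here; the only mild subtlety is making sure we interpret ``the quotient $G^*/\{0\}$'' as a hyperdefinable set in the sense of Section \ref{section: characterizations of stability} (with $E$ being equality), so that the unstable formula from Proposition \ref{proposition: NIP and unstable} legitimately witnesses instability of this hyperdefinable set. One could equivalently just appeal to the original definition of stability of a hyperdefinable set applied to an indiscernible sequence extracted from a witness of SOP for $\varphi$.
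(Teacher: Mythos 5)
Your proposal is correct and is essentially identical to the paper's proof, which simply cites Lemma \ref{lemma: subgroups of G}(3) together with Proposition \ref{proposition: NIP and unstable}: the only definable subgroups are $\{0\}$ and $G^*$, and instability of $T$ (witnessed by the SOP formula, both of whose variables range over the home sort $G^*$) shows $G^*/\{0\}$ is not a stable hyperdefinable set, so $G^{*st,0}=G^*$. Your extra care about viewing $G^*=G^*/\{0\}$ as a hyperdefinable set with $E$ being equality is exactly the right (and only) point to check.
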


\begin{proof}
It follows directly from Lemma \ref{lemma: subgroups of G}(3) and Proposition \ref{proposition: NIP and unstable}.
\end{proof}

		Let $\mathcal{N}:=(\mathbb{R}, +,-, R_r)_{r\in \mathbb{N}^+}$, where $R_r(x,y)$ holds if and only if $0\leq y-x\leq r$. Let $T':=\Th(\mathcal{N})$. 
	
	\begin{remark}\label{R} The family $(R_r)_{r\in \mathbb{N}^+}$ satisfy the following conditions:
		\begin{enumerate}
			\item $R_r(x,y)\iff R_r(0, y-x)$;
			\item $R_r(x,y)\iff R_r(-y,-x)$ ;
			\item $R_r(x,y)\iff R_{nr}(nx,ny)$ (where $n \in \mathbb{N}^+$);
			\item $R_r(x,y)\iff R_1(x,y) \vee R_1(x,y-1)\vee \dots \vee R_1(x,y-(r-1)) $.
		\end{enumerate}
	\end{remark}

	\begin{lema}\label{proposition: interdefinability}
		The structures $\mathcal{M}$ and $\mathcal{N}$ are interdefinable over $\emptyset$.
	\end{lema}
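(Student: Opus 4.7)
The plan is to establish that each of $\mathcal{M}$ and $\mathcal{N}$ defines (without parameters) all the primitives of the other, keeping in mind that both structures share the universe $\mathbb{R}$ and the addition $+$.

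For the direction $\mathcal{N}$ definable in $\mathcal{M}$: addition is common, and the neutral element $0$ is $\emptyset$-definable in any group (as is unary minus via $-x = y \iff x+y = 0$). The key point is $R_1$: by definition $R_1(x,y) \iff 0 \leq y-x \leq 1 \iff (y-x) \in I$, which is $\emptyset$-definable in $\mathcal{M}$ since $I$ is a primitive. Finally, each $R_r$ with $r > 1$ is a finite disjunction involving $R_1$ and translations by integers, precisely by clause (4) of Remark \ref{R}; since $1 \in \dcl^{\mathcal{M}}(\emptyset)$ by Remark \ref{1 in dcl}, the integers $0,1,\dots,r-1$ are all $\emptyset$-definable, so this disjunction is expressible.

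For the direction $\mathcal{M}$ definable in $\mathcal{N}$: again addition is common, and $0$ is $\emptyset$-definable in $\mathcal{N}$ as the neutral element of $(\mathbb{R},+)$. Then $I = \{x \in \mathbb{R} : R_1(0,x)\}$, because $R_1(0,x) \iff 0 \leq x - 0 \leq 1 \iff x \in [0,1] = I$. This gives an $\emptyset$-definition of $I$ in $\mathcal{N}$.

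There is no real obstacle; the only point requiring a moment of care is the availability of the integer constants needed in clause (4) of Remark \ref{R} when defining $R_r$ inside $\mathcal{M}$, which is handled by the earlier Remark \ref{1 in dcl} showing $1 \in \dcl^{\mathcal{M}}(\emptyset)$ (and hence $n \in \dcl^{\mathcal{M}}(\emptyset)$ for every $n \in \mathbb{Z}$).
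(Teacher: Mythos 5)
Your proof is correct and follows essentially the same route as the paper: define $R_1$ via $y-x\in I$ and the remaining $R_r$ via clause (4) of Remark \ref{R} together with $1\in\dcl^{\mathcal{M}}(\emptyset)$, and recover $I$ in $\mathcal{N}$ from $R_1$ (the paper's only cosmetic difference is using the parameter-free formula $R_1(x,x+x)$ instead of invoking the definable element $0$).
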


\begin{proof}
	 $\mathcal{M}$ is definable over $\emptyset$ in $\mathcal{N}$, because $x\in[0,1]$ iff $R_1(0,x)$ iff $R_1(x,2x)$.
	
	To see that $\mathcal{N}$ is definable over $\{1\}$ in $\mathcal{M}$, note that the function $-$ can be defined using $+$ as usual, $R_1(x,y)\iff y-x\in [0,1]$, and then use the last property in Remark \ref{R} to conclude that  all $R_r$, $r\in\mathbb{N}^+$, are definable over $\{1\}$ in $\mathcal{M}$.
Since $1\in\dcl^{\mathcal{M}}(\emptyset)$, we conclude that $\mathcal{N}$ is definable over $\emptyset$ in $\mathcal{M}$.
\end{proof}
	
	The result above shows that the theory $T'$ also has NIP and is unstable.

\begin{prop}\label{proposition: qe}
$T'$ has quantifier elimination after expansion by the constant $1$.
\end{prop}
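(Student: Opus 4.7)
The plan is to establish quantifier elimination for $T' \cup \{1\}$ via the substructure completeness test: if $N_1, N_2 \models T' \cup \{1\}$ are sufficiently saturated and $f \colon A \to B$ is an $L' \cup \{1\}$-isomorphism between small substructures, then $f$ extends to an isomorphism $N_1 \to N_2$. By a standard back-and-forth, this reduces to extending $f$ by a single element $a \in N_1 \setminus A$: find $b \in N_2$ so that $a \mapsto b$ extends $f$ to an $L' \cup \{1\}$-isomorphism $A\langle a\rangle \to B\langle b\rangle$. I note that every model of $T'$ is a divisible torsion-free abelian group (both properties being first-order and true in $\mathcal{N}$), and since $1 \in A$ entails $\mathbb{Z} \subseteq A$, every atomic $L' \cup \{1\}$-formula in one variable $x$ with parameters from $A$ has the form $nx + c = 0$ or $R_r(0, nx + c)$ with $n \in \mathbb{Z}$, $c \in A$, and $r \in \mathbb{N}^+$.

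First I would handle the \emph{algebraic case}: if there exist minimal $n_0 \in \mathbb{Z}^+$ and $c_0 \in A$ with $n_0 a = c_0$, set $b := f(c_0)/n_0 \in N_2$, well-defined by divisibility and torsion-freeness. Multiplying by $n_0$ and applying Remark \ref{R}(3), one obtains $R_r(0, na + c) \iff R_{n_0 r}(0, nc_0 + n_0 c)$, an atomic formula over $A$ and hence preserved by $f$; a parallel computation handles equalities, so $b$ realizes the transported q.f.\ type of $a$.

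The main obstacle will be the \emph{transcendental case}, where $na \notin A$ for every $n \in \mathbb{Z}^+$. Let $p(x)$ denote the transport through $f$ of the q.f.\ type of $a$ over $A$; by saturation of $N_2$, producing $b$ amounts to showing finite satisfiability of $p(x)$ in $N_2$. A finite fragment is a conjunction of disequalities $n_i x + f(c_i) \neq 0$ together with positive or negated conditions $R_{r_i}(0, n_i x + f(c_i))$; after rescaling $x$ via Remark \ref{R}(3) (multiplying through by the lcm $L$ of the $n_i$'s and substituting $Lx$ for $x$), I may assume $|n_i| = 1$ throughout, so each positive $R_r$-condition restricts $x$ to a ``bounded slice'' of $N_2$ of the form $R_r(0, \pm x + f(c_i))$, each negated one excludes such a slice, and the disequalities exclude finitely many points. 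The key observation is that the analogous fragment over $A$ is realized by $a$ in $N_1$: the slices there overlap in a bounded region containing $a$, and the crucial pairwise compatibility of these slices (that differences among their endpoints land in specific bounded intervals, with the relative order there being determined) is encoded entirely by $R_{r'}$-relations among $\mathbb{Z}$-linear combinations of the $c_i$'s, which $f$ preserves. Hence the transported slices in $N_2$ exhibit the same compatibility pattern, and their common intersection -- minus the excluded slices and finitely many points -- is nonempty by divisibility of $N_2$. Saturation then delivers the desired $b$, and the verification that $a \mapsto b$ extends $f$ to an $L' \cup \{1\}$-isomorphism $A\langle a\rangle \to B\langle b\rangle$ is routine.
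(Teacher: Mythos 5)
Your proposal is correct in outline, but it takes a genuinely different route from the paper. The paper proves Proposition \ref{proposition: qe} syntactically: by induction on formulas it reduces to a primitive existential formula, normalizes all atomic parts to conditions on a single term $ny-t_j(\bar x)$ using Remark \ref{R}, and then performs an explicit combinatorial case analysis on the resulting intervals to write down an equivalent quantifier-free formula. You instead argue semantically, via the one-point extension criterion for saturated models: you exploit that models of $T'$ are divisible torsion-free abelian groups, split into the algebraic case (where $b:=f(c_0)/n_0$ works, with Remark \ref{R}(3) transferring the $R_r$-conditions to atomic facts about parameters) and the transcendental case (where, after rescaling by the lcm and using divisibility, one must transfer satisfiability of a finite system of ``slice'' constraints along $f$). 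Both arguments ultimately rest on the same combinatorial core: two slices of standard radii can interact only when their endpoints are within a fixed bounded distance, such (non-)proximity and the relative order within it are recorded by finitely many atomic and negated atomic $R_n$-facts about $\mathbb{Z}$-linear combinations of the parameters (hence preserved by $f$), and divisibility gives density so that the surviving gap is infinite and cannot be killed by finitely many excluded points. Your sketch states this core as the ``compatibility pattern'' claim without proving it; that is precisely where the paper's bulleted case analysis lives, so in a full write-up you would still have to carry out essentially that bookkeeping (including the degenerate case where the intersection of the positive slices is a single point, which must then be checked against the excluded slices and points by atomic formulas alone). The trade-off is clear: the paper's proof is effective and exhibits the quantifier-free equivalent, while yours is conceptually cleaner and makes transparent which algebraic features of $T'$ (divisibility, torsion-freeness, boundedness of the relations $R_r$) drive the elimination, at the cost of relying on the saturation/back-and-forth machinery and of deferring the interval combinatorics to a ``routine'' verification.
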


\begin{proof}
We argue by induction on the length of the formula. So the proof boils down to showing that a primitive formula $(\exists y) \varphi(y,\bar x)$ is $T'$-equivalent to a quantifier free formula, assuming that all shorter formulas are $T'$-equivalent to qf-formulas. Recall that $(\exists y) \varphi(y,\bar x)$ being primitive means that $\varphi(y,\bar x)$ is a conjunction of atomic formulas and negations of such, i.e. $\varphi(y,\bar x) = \bigwedge_{j=1}^m R^{\varepsilon_j}_{r_j}(t^l_j(y,\bar x), t^r_j(y,\bar x))$, where $\varepsilon_j \in \{\pm 1,\pm 2\}$, $r_j \in \mathbb{N}^+$, $t^l_j(y,\bar x)$ and $t^r_j(y,\bar x)$ are terms, and: $R_{r_j}^{-2}(t,z):= \neg R_{r_j}(t,z)$, $R_{r_j}^2(t,z):= R_{r_j}(t,z)$, $R_{r_j}^{-1}(t,z): = \neg (t=z)$, $R_{r_j}^1(t,z): = (t=z)$.  

Using $+$, $-$, multiplying by suitable integers, Remark \ref{R}, and induction hypothesis, we can assume  that there is an integer $n \ne 0$  such that for every $j$: either $t^l_j(y,\bar x) =0$ and $t^r_j(y,\bar x) = ny - t_j(\bar x)$, or $t^l_j(y,\bar x) = ny - t_j(\bar x)$ and  $t^r_j(y,\bar x)=0$.

If some $\varepsilon_j=1$, one gets $ny= t_j(\bar x)$ and the quantifier $\exists y$ can be eliminated. So assume that all $\varepsilon_j \ne 1$. If additionally all $\varepsilon_j \ne 2$, then $(\exists y) \varphi(y,\bar x)$ is $T'$-equivalent to $\top$. So assume that some $\varepsilon_j= 2$, e.g. $\varepsilon_1=2$. Then $R^{\varepsilon_1}_{r_1}(t^l_1(y,\bar x), t^r_1(y,\bar x))$ either says (in $\mathcal{N}$) that $ny \in [t_1(\bar x) -r_1, t_1(\bar x)]$, or that $ny \in [t_1(\bar x), t_1(\bar x) +r_1]$. Suppose the latter case holds.  Consider all (finitely many) possibilities taking into account: 
\begin{itemize}
\item which terms $t_j(\bar x)-r_j, t_j(\bar x), t_j(\bar x) +r_j$ (for $j\in \{2,\dots,m\}$) belong to $[t_1(\bar x) -r_1, t_1(\bar x)]$; 
\item for those which belong to this interval, how they are ordered by $R_{r_1}$;
\item for $\varepsilon_j \ne -1$, writing  $R^{\varepsilon_j}_{r_j}(t^l_j(y,\bar x), t^r_j(y,\bar x))$ as ``$ny \in I$'' or as ``$ny \notin I$'', where $I:= [t_j(\bar x) -r_j, t_j(\bar x)]$ or $I:=[t_j(\bar x), t_j(\bar x) +r_j]$, we should specify which of the terms $t_1(\bar x)$ and $t_1(\bar x) +r_1$ belong to $I$.
\end{itemize}

Each of these possibilities is clearly a qf-definable condition on $\bar x$ (using finitely many integers, but they are terms, as 1 was added to the language). On the other hand, $(\exists y) \varphi(y,\bar x)$ is $T'$-equivalent to the disjunction of some subfamily of these conditions (by a simple combinatorics on intervals). Therefore, $(\exists y) \varphi(y,\bar x)$ is $T'$-equivalent to a qf-formula.
\end{proof}

\begin{prop}\label{proposition: stable quotient}
	The quotient $G^*/\mu$ is stable.
\end{prop}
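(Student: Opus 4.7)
My approach is to deduce Proposition~\ref{proposition: stable quotient} from Theorem~\ref{equivteor}. By Proposition~\ref{proposition: NIP and unstable} the theory $T$ has NIP, hence so does the hyperdefinable set $G^*/\mu$, and Theorem~\ref{equivteor} reduces the claim to showing that every indiscernible sequence $(\bar a_i)_{i<\omega}$ of elements of $G^*/\mu$ (indiscernible over some small $A$) is in fact totally indiscernible. I will choose representatives $a_i \in \bar a_i$ and, applying Ramsey and compactness first in $T$ and then in the o-minimal expansion $(\mathbb{R}^*,+,\le,1)$ of DOAG, pass to a subsequence $(a'_i)$ which is DOAG-indiscernible over the (still small) base $A \cup \mathbb{R}$. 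Being DOAG-indiscernible, the sequence is monotone, say increasing, and the consecutive differences $\delta_i := a'_{i+1} - a'_i > 0$ all share a single DOAG-$1$-type over $A \cup \mathbb{R}$: either infinitesimal, or finite but non-infinitesimal, or infinite.

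If the common type of $\delta_i$ is infinitesimal then $(\bar a'_i)$ is constant and trivially totally indiscernible. The finite non-infinitesimal case is ruled out, because the standard parts of $a'_j - a'_i$ would then scale linearly with $j-i$, taking distinct values and contradicting the equality of DOAG-types forced by indiscernibility on the increasing pairs. So it remains to handle the case where $\delta_i$ is infinite. Here the plan is to produce, for each permutation $\sigma$ of $\omega$ (in particular the swap of $0$ and $1$), a $T$-automorphism $\tau$ over $A$ with $\tau(a'_i) = a'_{\sigma(i)}$. This yields total $T$-indiscernibility of $(a'_i)$ and hence total $T^{heq}$-indiscernibility of $(\bar a'_i)$, which propagates to $(\bar a_i)$ by a standard subsequence argument combining this with the indiscernibility of $(\bar a_i)$.

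The construction of $\tau$ rests on the key claim that, in the infinite-differences case, $\{a'_i : i<\omega\}$ is $\mathbb{Q}$-linearly independent over $\mathbb{R} \cup A$. I would prove this by a telescoping argument using DOAG-indiscernibility: an alleged relation $\sum_k n_k a'_{i_k} = r$ with $r$ in the base persists when any chosen index $i_{k_0}$ is replaced by a fresh larger one, and subtracting the two relations yields $n_{k_0}(a'_{j} - a'_{i_{k_0}}) = 0$; since the difference is nonzero (indeed infinite), $n_{k_0} = 0$, and induction kills the remaining coefficients. Granted this independence, I define $\tau$ to be the identity on a $\mathbb{Q}$-linear complement of the rational span of $\{a'_i\}$ containing $\mathbb{R} \cup A$, to send $a'_i \mapsto a'_{\sigma(i)}$, and to extend $\mathbb{Q}$-linearly to all of $\mathbb{R}^*$.

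To verify that $\tau$ is a $T$-automorphism it suffices to check that $\tau(I^*) = I^*$. The point is that $I^* = [0,1]^*$ consists entirely of finite elements, whereas any nonzero element of the rational span of $\{a'_i\}$ is infinite, by $\mathbb{Q}$-linear independence over $\mathbb{R}$ together with the fact that each $a'_i$ is infinite. Consequently every $x \in I^*$ must lie in the fixed $\mathbb{Q}$-linear complement, so $\tau(x) = x$ and in particular $\tau(I^*) = I^*$. I expect the main technical obstacle to be the clean verification of the $\mathbb{Q}$-linear independence of $\{a'_i\}$ over $\mathbb{R} \cup A$ in the infinite-differences case; once that is in place, the rest of the argument is a short consequence of the fact that $I^*$ lives entirely in the ``finite'' direction of $\mathbb{R}^*$, which $\tau$ leaves pointwise fixed.
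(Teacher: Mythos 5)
Your route is genuinely different from the paper's: instead of the counting criterion (Corollary \ref{2.8}/Theorem \ref{equivteor}(3)), which the paper combines with quantifier elimination for the interdefinable structure $\mathcal{N}$ to show that over any $A$ of size at most $\mathfrak{c}$ all elements outside $\Lin_{\mathbb{Q}}(A\cup\mathbb{R})+\mu$ share one type, you go through Theorem \ref{equivteor}(4) and try to realize arbitrary permutations of the sequence by explicit additive automorphisms. The strategy is reasonable, but the crucial automorphism step has a genuine gap. First, the independence you prove is too weak for what you use: $\mathbb{Q}$-linear independence of $\{a'_i\}$ over $\mathbb{R}\cup A$ together with each $a'_i$ being infinite does \emph{not} imply that every nonzero element of $\operatorname{span}_{\mathbb{Q}}\{a'_i\}$ is infinite. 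For instance $H$ and $H+\epsilon$ ($H$ infinite, $\epsilon$ a nonzero infinitesimal) are $\mathbb{Q}$-independent over $\mathbb{R}$ and both infinite, yet their difference is infinitesimal; nothing in your stated claim rules out such a configuration among the $a'_i$. Second, even granting that every nonzero element of the span is infinite, your verification that $\tau(I^*)=I^*$ breaks for an arbitrary complement $C\supseteq\mathbb{R}\cup A$: a finite element $x$ decomposes as $x=v+c$ where $c\in C$ may itself be infinite, so $v\neq 0$ is not excluded. Concretely, nothing prevents the chosen basis of $C$ from containing $a'_0+\epsilon$ rather than $\epsilon$; then the infinitesimal $\epsilon\in I^*$ has $a'_0$-coordinate $-1$, is not fixed by $\tau$, and is sent to an infinite element, so $\tau$ is not an automorphism of $\mathcal{M}^*$.

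What you actually need is the stronger statement $\operatorname{span}_{\mathbb{Q}}\{a'_i\}\cap\bigl(\mathrm{Fin}+\operatorname{span}_{\mathbb{Q}}(A)\bigr)=\{0\}$, where $\mathrm{Fin}=\mathbb{R}+\mu$ is the group of finite elements, and then you must choose the complement $C$ to contain all of $\mathrm{Fin}$ (hence $I^*$), not merely $\mathbb{R}\cup A$. The good news is that your telescoping idea proves this stronger statement if run with inequalities instead of exact relations: if $\bigl|\sum_k n_k a'_{i_k}-b\bigr|<r$ with $r\in\mathbb{R}$ and $b\in\operatorname{span}_{\mathbb{Q}}(A)$, DOAG-indiscernibility over $\mathbb{R}\cup A$ preserves this bound when the top index is replaced by a fresh larger one, and subtracting gives $\lvert n_{k_0}(a'_j-a'_{i_{k_0}})\rvert<2r$ with $a'_j-a'_{i_{k_0}}$ infinite, forcing $n_{k_0}=0$ and allowing the induction. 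Two further points deserve care: the sequence produced by Ramsey and compactness is an extracted sequence, not a subsequence, so the transfer of total indiscernibility back to $(\bar a_i)$ should be argued via the EM-type of hyperimaginaries (the orbit preimages are type-definable over $\emptyset$, so increasing tuples of the extracted classes realize the same types as those of the original), rather than dismissed as a ``standard subsequence argument''; and the implication (4)$\Rightarrow$(1) of Theorem \ref{equivteor} requires NIP of the hyperdefinable set $G^*/\mu$, so you should at least cite that NIP of $T$ passes to hyperdefinable sets. With these repairs your argument goes through, but as written the key verification $\tau(I^*)=I^*$ does not; by contrast, the paper's counting proof via Proposition \ref{proposition: qe} and Lemma \ref{proposition: interdefinability} sidesteps the automorphism construction entirely.
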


\begin{proof}

By Theorem \ref{equivteor}, it is enough to show that for every $A \subseteq \mathcal{M}^*$ with $|A| \leq \mathfrak{c}$ we have $|S_{G^*/\mu}(A)| \leq \mathfrak{c}$. 

By Lemma \ref{proposition: interdefinability}, $\mathcal{M}^*$ can be treated as an elementary extension of $\mathcal{N}$.

Consider an arbitrary set $A$ as above. Put $V_A:=\Lin_{\mathbb{Q}}(A\cup \mathbb{R})$ and $\tilde{V}_A:=V_A+\mu$. First, note that for any $a,a'\in G^*$, if $a-a'\in \mu$, then trivially $\tp((a+\mu)/A)=\tp((a'+\mu)/A$). Now, consider any $a\in G^*\setminus \tilde{V}_A$. Then $a$ satisfies the formulas $$kx\neq t$$ and $$\neg R_r(t,kx)$$ for all $k \in \mathbb{Z} \setminus \{0\}$, $r\in \mathbb{N}^+$, and $t\in V_A$. By Remark \ref{R}, Lemma \ref{proposition: interdefinability} and Proposition \ref{proposition: qe}, these formulas completely determine $\tp(a/V_A)$. Hence, any $a,a'\in G^*\setminus \tilde{V}_A$ have the same type over $A$. Therefore, $|S_{G^*/\mu}(A)| \leq |\tilde{V}_A/\mu| +1 \leq|V_A| +1 = \mathfrak{c}$.	
\end{proof}

\begin{cor}
	$\mu= G^{*st}$.
\end{cor}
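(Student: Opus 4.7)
The plan is to combine three earlier ingredients: the stability of $G^*/\mu$ (the proposition immediately preceding the statement), the classification of $\emptyset$-$\bigwedge$-definable subgroups of $G^*$ from Lemma \ref{lemma: subgroups of G}(2), and the main result of \cite{MR3796277} that under NIP, $G^{st}$ is $\emptyset$-$\bigwedge$-definable and does not depend on the parameter set.

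First, since $G^*/\mu$ is stable and $\mu$ is $\emptyset$-$\bigwedge$-definable, the minimality in the definition of $G^{*st}$ gives the inclusion $G^{*st} \leq \mu$. Next, by Proposition \ref{proposition: NIP and unstable} we know $T$ has NIP, so by the main result of \cite{MR3796277}, $G^{*st}$ is an $\emptyset$-$\bigwedge$-definable subgroup of $G^*$. Applying Lemma \ref{lemma: subgroups of G}(2), we conclude that $G^{*st}$ must be one of $\{0\}$, $\mu$, or $G^*$. The inclusion from the first step rules out $G^{*st} = G^*$.

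The remaining task is to rule out $G^{*st} = \{0\}$, i.e., to show that $G^*$ itself is not stable as a hyperdefinable set (with the trivial equivalence relation being equality). But this is immediate from the unstability proof in Proposition \ref{proposition: NIP and unstable}: the formula $\varphi(x,y) := \text{``}x \in I - y \wedge x \in I + (y-1)\text{''}$ has SOP with both variables ranging over $G^*$. By standard arguments (Ramsey plus compactness), this yields an indiscernible sequence $(a_i,b_i)_{i<\omega}$ in $G^*\times G^*$ with $\tp(a_i,b_j)\neq\tp(a_j,b_i)$ for $i<j$, witnessing that $G^*$ is unstable as a hyperdefinable set. Hence $G^{*st}\neq\{0\}$, leaving $G^{*st}=\mu$.

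I do not anticipate any serious obstacle: the classification in Lemma \ref{lemma: subgroups of G}(2) does all the heavy lifting, leaving only a trichotomy to resolve, and both remaining cases are eliminated by results already established in the excerpt. The only mild care required is to observe that unstability of the theory $T$ via a formula in two variables from the home sort translates directly to unstability of $G^*$ as a hyperdefinable set; this is automatic because the SOP formula $\varphi(x,y)$ lives on $G^*\times G^*$.
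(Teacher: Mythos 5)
Your proof is correct and is essentially the paper's argument: the paper's own proof is just a citation of Lemma \ref{lemma: subgroups of G}(2), Proposition \ref{proposition: stable quotient}, and Proposition \ref{proposition: NIP and unstable}, i.e.\ exactly the trichotomy-plus-elimination you spell out (with NIP giving $\emptyset$-$\bigwedge$-definability of $G^{*st}$ via the main result of \cite{MR3796277}, stability of $G^*/\mu$ ruling out $G^*$, and instability of $T$ on the home sort ruling out $\{0\}$). No gaps.
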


\begin{proof}
	It follows from Lemma \ref{lemma: subgroups of G}(2), Proposition \ref{proposition: stable quotient}, and Proposition \ref{proposition: NIP and unstable}.
\end{proof}
To summarize, we have proved that $G^*$ is a $\emptyset$-definable group in a monster model of a NIP theory such that ${G^*}^{st,0}\neq {G^*}^{st}\neq {G^*}^{00}=G^*$.

\section{How to construct examples with $G^{st} \ne G^{st,0}$?}\label{section 5}

Our context is that $G$ is a $\emptyset$-definable group in a monster model $\C$ of a complete theory $T$ with NIP.

\begin{prop}\label{proposition: based on Hrushovski}
If $G^{st}=G^{st,0}$, then any $\bigwedge$-definable subgroup $H$ of $G$ with $G/H$ stable is an intersection of definable subgroups.
\end{prop}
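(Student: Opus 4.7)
The plan is to combine the assumption $G^{st}=G^{st,0}$ with the classical Hrushovski theorem, applied inside the stable definable quotients $G/K$. Write $\mathcal{K}$ for the family of definable subgroups $K$ of $G$ with $G/K$ stable; it is closed under finite intersections, since $G/(K_1\cap K_2)$ embeds into $G/K_1\times G/K_2$, and by assumption $\bigcap\mathcal{K}=G^{st,0}=G^{st}$. Since $G/H$ is stable, minimality of $G^{st}$ forces $G^{st}\subseteq H$. The overall strategy is to reduce to normal members of $\mathcal{K}$, to show $H=\bigcap_{K}HK$ by a compactness argument, and finally to decompose each $HK$ using Hrushovski's theorem inside the stable definable group $G/K$.

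For the reduction, given $K\in\mathcal{K}$ consider the normal core $\mathrm{Core}(K)=\bigcap_{g\in G}gKg^{-1}$: it is normal, and $G/\mathrm{Core}(K)$ is stable because it embeds into $\prod_{g\in G}G/gKg^{-1}$, each factor being in hyperdefinable bijection with $G/K$. The NIP version of the Baldwin--Saxl theorem, applied to the uniformly definable family $\{gKg^{-1}:g\in G\}$, shows that this intersection is equal to some finite subintersection, so $\mathrm{Core}(K)$ is in fact definable, hence still in $\mathcal{K}$. Therefore the subfamily $\mathcal{K}^n\subseteq\mathcal{K}$ of normal members is cofinal in $\mathcal{K}$ (and closed under finite intersections), with $\bigcap\mathcal{K}^n=G^{st,0}=G^{st}\subseteq H$. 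Now I claim $H=\bigcap_{K\in\mathcal{K}^n}HK$: the inclusion $\subseteq$ is immediate, and for $\supseteq$, take $g\notin H$; then $gG^{st}\cap H=\emptyset$ (otherwise $g\in HG^{st}=H$), so the directed family of $\bigwedge$-definable sets $\{gK\cap H:K\in\mathcal{K}^n\}$ has empty total intersection $gG^{st}\cap H$, and by saturation some single $gK\cap H$ is already empty, i.e.\ $g\notin HK$.

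To finish, I fix a normal $K\in\mathcal{K}^n$: then $G/K$ is a definable group whose induced theory is stable, and $\pi_K(H)$ is a $\bigwedge$-definable subgroup of $G/K$. The classical Hrushovski theorem applied inside the stable group $G/K$ gives $\pi_K(H)=\bigcap_j L_{K,j}$ with each $L_{K,j}$ a definable subgroup of $G/K$; pulling back, $HK=\pi_K^{-1}(\pi_K(H))=\bigcap_j\pi_K^{-1}(L_{K,j})$ is an intersection of definable subgroups of $G$, and therefore so is $H=\bigcap_{K,j}\pi_K^{-1}(L_{K,j})$. The main obstacle I expect is the normal-core reduction via Baldwin--Saxl in the NIP setting: this is precisely the step that promotes $\mathrm{Core}(K)$ from $\bigwedge$-definable to genuinely definable and thereby allows us to move from a stable hyperdefinable quotient (where Hrushovski's theorem is not directly available in classical form) to a stable definable group in which it can be applied off the shelf.
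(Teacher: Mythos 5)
Your overall architecture matches the paper's proof (reduce to \emph{normal} definable subgroups with stable quotient, apply Hrushovski's theorem in each stable definable quotient $G/K$, then recover $H$ as $\bigcap_K HK$ by a compactness argument using $G^{st}\leq H$), but the normalization step has a genuine gap. The NIP Baldwin--Saxl lemma does \emph{not} say that the intersection of an infinite uniformly definable family of subgroups is a finite subintersection; it only says that every \emph{finite} intersection from the family equals a subintersection of bounded size $N$. Under NIP alone, normal cores of definable subgroups can fail badly to be finite subintersections of conjugates: for instance, in the (NIP) affine group $\mathbb{Q}_p\rtimes\mathbb{Q}_p^{\times}$ the conjugates of the definable subgroup $\mathbb{Z}_p\times\{1\}$ are the groups $a\mathbb{Z}_p\times\{1\}$, whose total intersection is trivial and is not equal to any finite subintersection. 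So the sentence ``Baldwin--Saxl \ldots shows that this intersection is equal to some finite subintersection, so $\mathrm{Core}(K)$ is definable'' is exactly the point where your argument breaks, and it is also the point your whole scheme leans on: without definable normal cores you have no reason to believe that the normal members of $\mathcal{K}$ are cofinal, i.e.\ that $\bigcap\mathcal{K}^{n}=G^{st,0}$, and then the compactness step $H=\bigcap_{K\in\mathcal{K}^n}HK$ collapses.

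What actually makes the core a finite subintersection is the \emph{stability} of the quotient, not NIP. Since every conjugate $gKg^{-1}$ again has stable quotient, all conjugates contain $G^{st}=G^{st,0}$, so their images are relatively definable subgroups of the stable hyperdefinable group $G/G^{st,0}$; by \cite[Remark 2.5(iv)]{MR3796277} the intersection of all conjugates is a \emph{bounded} subintersection, and then compactness together with the definability of $K$ upgrades bounded to finite (an infinite strictly descending chain of finite subintersections of conjugates would, by compactness, produce chains of unbounded length, contradicting the bounded chain condition in the stable quotient). This is precisely the route the paper takes. Two smaller remarks: your justification that $G/\mathrm{Core}(K)$ is stable via an embedding into $\prod_{g\in G}G/gKg^{-1}$ uses a product over an unbounded index set, which is not covered by the closure properties being invoked; it is cleaner to note that $G^{st}\leq\mathrm{Core}(K)$, so $G/\mathrm{Core}(K)$ is a coarsening of the stable quotient $G/G^{st}$ (or to first get the bounded subintersection). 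Finally, as in the paper, the application of Hrushovski's theorem inside $G/K$ requires the version for a stable definable group in a possibly unstable ambient theory, not literally the statement for stable theories; this is harmless but worth saying explicitly.
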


\begin{proof}
 Assume that $G^{st}=G^{st,0}=\dcap_{j\in J}G_j$, where $G_j$ are definable groups. Since $G/G^{st,0}$ is a stable hyperdefinable group, by \cite[Remark 2.5(iv)]{MR3796277}, the intersection of all the conjugates of each $G_j$ is a bounded subintersection, and so it is a finite subintersection (by compactness and definability of $G_j$). Replacing each $G_j$ by such a finite subintersection, we can assume that all the $G_j$'s are normal subgroups of $G$. 
Hence, for every $j\in J$ we have  $G_j\leq H\cdot G_j\leq G$ and  
 $$\bigslant{H\cdot G_j}{G_j}\leq \bigslant{G}{G_j}.$$
 Using Hrushovski's theorem (see \cite[Ch. 1, Lemma 6.18]{pillay1996geometric}) inside the (definable) stable group $G/G_j$, we get $$\bigslant{H\cdot G_j}{G_j} = \dcap_{i\in I_j} \bigslant{ K^i_j}{G_j},$$ for some definable subgroups $K_j^i$ of $G$ such that $K_j^i\cdot G_j=K^i_j$ for all $i \in I_j$.

Since $G/H$ is assumed to be stable, $G^{st} \leq H$. Thus,
  $$H=H \cdot \dcap_{j\in J}G_j= \dcap_{j\in J}H\cdot G_j=\dcap_{j\in J}\dcap_{i\in I_j}K^i_j.$$
\end{proof}

If we do not require that $G^{00} = G^0$, then it is easy to find
examples where $G^{00} \ne G^{st} \ne G^{st,0}$; that is why  \cite{MR3796277}
required in this problem also $G^{00}=G$. However, the requirement $G^{00} =
G^0$ seems sufficiently interesting. The next proposition yields the whole class of examples where $G^{00} \ne G^{st} \ne G^{st,0}$ (but without the requirement that $G^{00}=G^0$).

\begin{prop}\label{4.10}
Let $G$ be definably isomorphic to a definable semidirect
product of definable groups $H$ and $K$ (symbolically, $G
\cong_{def} H \ltimes K$) such that $H^{00}\ne H^0$ and $K^{st} \ne
K^{00}$. Then $G^0 \ne G^{00} \ne G^{st} \ne G^{st,0}$.
\end{prop}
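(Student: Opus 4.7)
The plan is to exploit both the definable surjective homomorphism $\pi_H \colon G \to H$ (with kernel $K$) and the embedding $K \hookrightarrow G$ given by the semidirect product structure. Writing each $g \in G$ uniquely as $g=kh$ with $k \in K$ and $h \in H$, we have the definable (but non-homomorphic) projection $\pi_K \colon g \mapsto k$. Since each of $K^{0},K^{00},K^{st}$ is characteristic in $K$ and hence $G$-normal, both $K^\ast H$ and $KH^\ast=\pi_H^{-1}(H^\ast)$ are $\bigwedge$-definable subgroups of $G$ for every $\ast\in\{0,00,st\}$.

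The core of the argument is the structural identity that, for each $\ast\in\{0,00,st\}$,
\[
G^\ast \cap K = K^\ast \quad \text{and} \quad \pi_H(G^\ast) = H^\ast.
\]
To prove the first, the map $g\mapsto \pi_K(g)K^\ast$ has fibres equal to the cosets of $K^\ast H$ (using $G$-normality of $K^\ast$), yielding a definable bijection $G/(K^\ast H) \xrightarrow{\sim} K/K^\ast$ of hyperdefinable sets. Hence $K^\ast H$ inherits the defining property of $G^\ast$---intersection of definable subgroups of finite index for $\ast=0$, $\bigwedge$-definable of bounded index for $\ast=00$, $\bigwedge$-definable with stable quotient for $\ast=st$---so $G^\ast \le K^\ast H$, from which $G^\ast\cap K \le K^\ast$. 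The reverse inclusion follows by minimality of $K^\ast$, since $G^\ast\cap K$ inherits the corresponding defining property from $G^\ast$ (via intersection with $K$, using the embedding $K/(G^\ast\cap K)\hookrightarrow G/G^\ast$ for $\ast\in\{00,st\}$). The identity $\pi_H(G^\ast)=H^\ast$ is proved dually, using $KH^\ast$ together with the homomorphism $\pi_H$. Inequalities (a) $G^0\neq G^{00}$ and (b) $G^{00}\neq G^{st}$ now follow at once: $G^0=G^{00}$ would yield $H^0=\pi_H(G^0)=\pi_H(G^{00})=H^{00}$, while $G^{00}=G^{st}$ would yield $K^{00}=G^{00}\cap K=G^{st}\cap K=K^{st}$; each contradicts a hypothesis.

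The main obstacle is the inequality $G^{st}\neq G^{st,0}$, for which I would invoke Proposition \ref{proposition: based on Hrushovski}. Suppose for contradiction that $G^{st}=G^{st,0}$. The subgroup $M := K^{00}H^{00}$ is $\bigwedge$-definable and has bounded index in $G$---the map $g\mapsto(\pi_K(g)K^{00},\pi_H(g)H^{00})$ gives a definable bijection $G/M \xrightarrow{\sim} K/K^{00}\times H/H^{00}$---so $G/M$ is bounded and hence stable. Proposition \ref{proposition: based on Hrushovski} then writes $M=\bigcap_i G_i$ with each $G_i$ definable. Each $G_i\supseteq M$ is a definable subgroup of $G$ of bounded index, hence of finite index (a definable set in the monster model is either finite or of cardinality at least $\kappa$). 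Therefore $G^0\le \bigcap_i G_i = M$, and applying $\pi_H$ yields $H^0=\pi_H(G^0)\le \pi_H(M)=H^{00}$, contradicting $H^0\neq H^{00}$. The key subtlety is the passage from bounded to finite index for the definable $G_i$, which is precisely what allows $G^0$ to be trapped inside $M$.
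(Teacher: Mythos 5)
Your proof is correct, and it follows the same overall strategy as the paper --- compute the components of the semidirect product and then apply Proposition \ref{proposition: based on Hrushovski} to a $\bigwedge$-definable subgroup with bounded (hence stable) quotient --- but the implementation differs in two respects. First, where the paper simply cites \cite{gismatullin2020bohr} for $G^{00}=H^{00}\ltimes K^{00}$ (and uses $G^0=H^0\ltimes K^0$ for the last inequality), you prove the needed component identities by hand via the subgroups $K^\ast H$ and $\pi_H^{-1}(H^\ast)$ together with minimality; this makes the argument self-contained, and the identities $G^{00}\cap K=K^{00}$, $G^{st}\cap K=K^{st}$, $\pi_H(G^0)=H^0$, $\pi_H(G^{00})=H^{00}$ that you actually use are all correct (the inclusions $H^\ast\le G^\ast\cap H$ and $K^\ast\le G^\ast\cap K$ come from minimality, the reverse ones from the invariant coset-space bijections you describe, exactly as the paper transfers stability for $G/(H^{00}\ltimes K^{st})$). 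Second, for $G^{st}\ne G^{st,0}$ the paper applies Proposition \ref{proposition: based on Hrushovski} to $H^{00}\ltimes K$, whose quotient is $H/H^{00}$ and hence visibly not profinite, while you apply it to $M=K^{00}H^{00}$ (which is in fact $G^{00}$) and derive the contradiction by trapping $G^0$ inside $M$: definable subgroups of bounded index have finite index, so $M=\bigcap_i G_i$ would force $G^0\le M$ and hence $H^0\le H^{00}$. These two arguments are equivalent in content (non-profiniteness of $H/H^{00}$ is the engine in both), so nothing is lost or gained beyond bookkeeping.

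Two small blemishes, neither affecting the proof. Your blanket claim that $K^0H$ ``inherits'' the property of being an intersection of definable finite-index subgroups (equivalently $G^0\cap K=K^0$) is not immediate: one needs, e.g., a Baldwin--Saxl-type argument replacing a finite-index $L\le K$ by a finite intersection of its $H$-conjugates to get an $H$-invariant definable finite-index subgroup; fortunately you never use the case $\ast=0$ of the $K$-side identity, so this is only an over-statement, not a gap. Also, the parenthetical reason you give for ``bounded index implies finite index for definable subgroups'' (cardinality of definable sets) is not quite the right justification --- the correct one is the standard compactness/saturation argument producing unboundedly many cosets when the index is infinite --- but the fact itself is standard and true.
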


\begin{proof}
	W.l.o.g. $G = H \ltimes K$ and $H$, $K$, and the action of $H$ on $K$ are $\emptyset$-definable. Recall that by NIP, the 00-components exist (i.e. do not depend on the choice of parameters over which they are computed). Hence, $K^{00}$ is invariant under all definable automorphisms, in  particular under the action of $H$. So $G^{00} = H^{00} \ltimes K^{00}$ (e.g. by Corollary 4.11 in \cite{gismatullin2020bohr}).
	But $G^{st} \leq H^{00} \ltimes K^{st}$, because the map $(h,k)/ (H^{00} \ltimes K^{st}) \mapsto h/H^{00} \times k/K^{st}$ is an invariant bijection from 
	$G / (H^{00} \ltimes K^{st})$ to $H/H^{00} \times K/K^{st}$ and the last set is stable as a product of stable sets.
	 Thus, since $K^{st}$
	is a proper subgroup of $K^{00}$, we get that $G^{00} \ne G^{st}$.

	To see that $G^{st} \ne G^{st,0}$,  it is enough to note that $H^{00} \ltimes K \leq
	G$ and that  $H^{00} \ltimes K$ is not an
	intersection of definable groups (because $G/(H^{00} \ltimes K) \cong_{def}
	H/H^{00}$ is not profinite in the logic topology as $H^{00} \ne H^0$). Indeed, having this, since $G/(H^{00} \ltimes K)$  is bounded and so stable, by Proposition \ref{proposition: based on Hrushovski}, we conclude that  $G^{st} \ne G^{st,0}$.

	The fact that $G^{00} \ne G^{0}$ follows  from $H^{00}\ne H^0$, as $G^{0}= H^0 \ltimes K^0$.
\end{proof}

\begin{remark}
The assumption of Proposition \ref{4.10} is equivalent to saying
that $G$ has a definable, normal  subgroup $K$ with $K^{st} \ne
K^{00}$
and $(G/K)^{00} \ne (G/K)^{0}$ such that the quotient map $G \to
G/K$ has a section which is a definable homomorphism.
\end{remark}

The proof of Proposition \ref{4.10} can be easily modified to get the following variant.

\begin{remark}
The conclusion of Proposition \ref{4.10} remains true with the assumption ``$H^{00}\ne H^0$ and $K^{st} \ne
K^{00}$'' replaced by ``$H^{st}\ne H^{00}$ and $K^{00} \ne
K^{0}$''.
\end{remark}

One can find many examples satisfying the assumptions of Proposition \ref{4.10}.
For instance, take any group $H$ (definable in a monster model of a NIP
theory $T_1$) with $H^{00} \ne H^0$ (e.g. the circle group in the
theory of real closed fields) and any group $K$  (definable in a monster
model of a NIP theory $T_2$; where $T_1$ and $T_2$ are in disjoint
languages) with $K^{st} \ne K^{00}$ (e.g. $T_2$ is stable and $K$ is
infinite). Consider $T$ being the union of $T_1$ and $T_2$ living on two
disjoint sorts. Then $G:=H \times K$ satisfies the assumptions of Proposition \ref{4.10}
as a group definable in $T$.

One could still ask if it is possible to find examples satisfying the condition $G^0 =
G^{00} \ne G^{st} \ne G^{st,0}$ by finding a definable, normal subgroup
$K$ satisfying $K^{00}\ne K^0$ where $G \to G/K$ does not have a definable
section. 
However, there is no chance for this potential method to work for groups
of finite exponent, as for any torsion (equivalently finite exponent)
group $K$ definable in a monster model, we have $K^{00}=K^0$. This is
because $K/K^{00}$ is a compact torsion group, and such groups are known
to be profinite (see \cite[Theorem 8.20]{hewitt2013abstract}). 


\begin{problem}
Construct $G$ of finite exponent with $G^{00} \ne G^{st} \ne G^{st,0}$. (The equality $G^{0}=G^{00}$ always holds by the fact at the end of the last paragraph.)
\end{problem}

In the final part of this section, we describe how one could try to construct examples where  $G^{00} \ne G^{st} \ne G^{st,0}$. In fact, originally we used this approach to find the example in Section \ref{section: main example}. We will also point out a difference between the situation in the example from Section \ref{section: main example} and the finite exponent case.

	\begin{prop}\label{4.1}
		The conditions $G^{st,0}\neq G^{st}$ and $G^{st}\neq G^{00}$ are equivalent to the existence of a $\bigwedge$-definable subgroup $H$ of  $G$ such that:
		\begin{enumerate}
			\item $H$ is a countable  intersection $\dcap_{n < \omega} D_n$ of definable subsets of $G$ satisfying $D_{n+1}D_{n+1}\subseteq D_{n}$ and symmetric (i.e. $D_{n}^{-1}=D_{n}$ and $e \in D_n$);
			\item $[G:H]$ is unbounded;
			\item $H$ is not an intersection of definable subgroups;
			\item $G/H$ is stable.
		\end{enumerate}
	\end{prop}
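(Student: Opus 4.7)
The plan is to prove each direction separately. The backward direction follows directly from Proposition~\ref{proposition: based on Hrushovski}, while the forward direction requires an explicit construction of $H$.

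For the backward direction, assume such $H$ exists. Condition (4) and the minimality of $G^{st}$ give $G^{st} \leq H$; condition (2) then forces $G^{00} \not\leq H$, so $G^{st} \neq G^{00}$ (recalling that $G^{st} \leq G^{00}$ because $G/G^{00}$ is bounded, hence stable). If $G^{st}$ equaled $G^{st,0}$, then Proposition~\ref{proposition: based on Hrushovski} applied to $H$ (a $\bigwedge$-definable subgroup with $G/H$ stable) would make $H$ an intersection of definable subgroups, contradicting (3); hence $G^{st} \neq G^{st,0}$.

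For the forward direction, assume both inequalities. Since the containments $G^{st} \leq G^{00}$ and $G^{st} \leq G^{st,0}$ are strict, I would fix witnesses $k \in G^{00} \setminus G^{st}$ and $h \in G^{st,0} \setminus G^{st}$. Writing $G^{st} = \bigcap_{\alpha \in I} Y_\alpha$ with $Y_\alpha$ symmetric definable sets enjoying the squaring-descent property (for every $\alpha$ some $\alpha'$ satisfies $Y_{\alpha'} \cdot Y_{\alpha'} \subseteq Y_\alpha$; standard by compactness, using that $G^{st}$ is a subgroup), I would choose indices $\alpha_0, \beta_0 \in I$ with $k \notin Y_{\alpha_0}$ and $h \notin Y_{\beta_0}$. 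Then set $D_0 := Y_{\alpha_0} \cap Y_{\beta_0}$ and recursively define $D_{n+1}$ as the intersection of squaring-descent refinements of the finitely many $Y_\alpha$'s whose intersection is $D_n$; the inclusion $(A \cap B) \cdot (A \cap B) \subseteq (A \cdot A) \cap (B \cdot B)$ yields $D_{n+1} \cdot D_{n+1} \subseteq D_n$, while symmetry of each $D_n$ and the inclusion $D_n \supseteq G^{st}$ persist at each step.

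Setting $H := \dcap_n D_n$, condition (1) is immediate. Condition (4) follows from $H \supseteq G^{st}$ together with the fact that a quotient of a stable hyperdefinable set by a coarser $\bigwedge$-definable equivalence relation is still stable (fewer types over any model, by Corollary~\ref{2.8}). Condition (2) holds since $k \notin D_0 \supseteq H$, so $G^{00} \not\leq H$ and $[G:H]$ is unbounded. For (3), if $H$ could be written as $\bigcap_\gamma K_\gamma$ with each $K_\gamma$ a definable subgroup, then each $K_\gamma \supseteq H \supseteq G^{st}$ would satisfy $G/K_\gamma$ stable, placing $K_\gamma$ in the family defining $G^{st,0}$; this would force $G^{st,0} \leq \bigcap_\gamma K_\gamma = H$, contradicting $h \in G^{st,0} \setminus H$. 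The only delicate point is the bookkeeping to maintain the squaring-descent chain of the $D_n$'s while simultaneously keeping $k$ and $h$ outside, but this is handled routinely by the standard compactness arguments for $\bigwedge$-definable subgroups.
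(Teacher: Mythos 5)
Your proof is correct, and its backward direction is exactly the paper's: stability of $G/H$ gives $G^{st}\leq H$, unboundedness of $[G:H]$ then separates $G^{st}$ from $G^{00}$, and Proposition \ref{proposition: based on Hrushovski} together with condition (3) rules out $G^{st}=G^{st,0}$. In the forward direction you take a mildly different route. The paper observes, by compactness, that $G^{st}$ is the intersection of \emph{all} subgroups $H$ satisfying (1), (2), (4), and then argues that if each of these were an intersection of definable subgroups, then $G^{st}$ itself would be an intersection of definable subgroups with stable quotients, forcing $G^{st,0}=G^{st}$; you instead fix witnesses $k\in G^{00}\setminus G^{st}$ and $h\in G^{st,0}\setminus G^{st}$ and build one explicit countable chain $(D_n)_{n<\omega}$ of symmetric definable sets around $G^{st}$ avoiding both, getting (2) from the NIP fact that any $\bigwedge$-definable subgroup of bounded index contains $G^{00}$, and (3) by the same principle that an intersection of definable subgroups with stable quotients must contain $G^{st,0}$, localized at the point $h$. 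The ingredients are identical (the squaring-descent presentation of type-definable subgroups by compactness, stability of coarser hyperdefinable quotients via the counting characterization of Corollary \ref{2.8}, and the defining property of $G^{st,0}$), but your single-witness construction is more explicit and in effect fills in the detail that the paper compresses into ``by compactness'' (in particular, why the members of its family can be taken to have unbounded index), at the cost of the bookkeeping with the refining chain, which you handle correctly via $(A\cap B)(A\cap B)\subseteq AA\cap BB$.
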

\begin{proof}
	If $G^{st}\neq G^{00}$, then, by compactness, $G^{st}=\dcap \{H: H \text{ satisfies (1), (2), (4)}\}.$ Hence, assuming additionally that $G^{st,0}\neq G^{st}$, at least one of those groups $H$ has to also satisfy condition $(3)$.
	
	Assume now that $H\leq G$ satisfies conditions $(1)$, $(2)$, $(3)$, and $(4)$. Since $G^{st} \leq H$, we get $G^{st}\neq G^{00}$. The fact that $G^{st,0}\neq G^{st}$ follows from Proposition \ref{proposition: based on Hrushovski}. 
\end{proof}

Note that assuming (1), the negation of (3) is equivalent to saying that for every $n<\omega$ there is $m>n$ and a definable subgroup $K$ of $G$ such that $D_m \subseteq H \subseteq D_n$.

\begin{remark}
	If we have a situation as in the last proposition, then the same holds for $G$ treated as a group definable in $(G,\cdot, (D_n)_{n<\omega})$.
\end{remark}

So an idea is to look for a group $G$ and a decreasing sequence $(D_n)_{n<\omega}$ of symmetric subsets of $G$ with $D_{n+1}D_{n+1} \subseteq D_n$ for all $i<\omega$, such that for $M:=(G,\cdot, (D_n)_{n<\omega}))$ and $G^*:= G(\C)$ (where $\C =(G^*,+, (D_n^*)_{n<\omega}) \succ M$ is a monster model), the group $H:=\dcap_{n < \omega} D_n^*$ satisfies (1)-(4) from the last proposition (with $*$ added everywhere). In the example from Section \ref{section: main example}, $G := (\R,+)$ and as $D_n$ we can take $[-1/2^n,1/2^n]$. Then the $D_n$'s are definable in $(\R,+,[0,1])$,
hence $M$ is interdefinable with $(\R,+,[0,1])$, and so we focused on the latter structure. In the proof of stability of $G^*/\mu$ (see Proposition \ref{proposition: stable quotient}), for the counting argument to work it was important that $D_{n+1}$ is generic in $D_{n}$ (i.e. finitely many translates of $D_{n+1}$ cover $D_n$), 
as this guarantees that $\mu = \bigcap D_n^*$ has bounded index in the subgroup generated by $D_1^*$.  The next proposition shows that for abelian groups of finite exponent this genericity condition always fails.


\begin{prop}
	If $G$ is abelian of finite exponent, then there is no sequence $(D_n)_{n<\omega}$ of definable sets such that: \begin{enumerate}
		\item $D_n$ is symmetric and  $D_{n+1}+D_{n+1}\subseteq D_n$ for all $n<\omega$;
		\item $D_{n+1}$ is generic in $D_n$ for all $n<\omega$;
		\item $\dcap_{n<\omega} D_n$ is not an intersection of definable groups. 
	\end{enumerate}
\end{prop}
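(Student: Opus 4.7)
The plan is to argue by contradiction: assuming a sequence $(D_n)_{n<\omega}$ satisfying (1) and (2) exists, I will show that $H := \bigcap_n D_n^*$, computed in a monster model $G^*$, is automatically an intersection of definable subgroups of $G^*$, contradicting (3). Condition (1) already gives that $H$ is a $\bigwedge$-definable subgroup of $G^*$, so the content is producing enough definable subgroups sandwiched between $H$ and the various $D_n^*$. My strategy is to pass to the ambient subgroup $K^* := \langle D_0^* \rangle$, show that $K^*$ is definable and that $[K^*:H]$ is bounded, and then apply \cite[Theorem 8.20]{hewitt2013abstract} to the compact abelian group $K^*/H$ of exponent $m$ (in the logic topology) to conclude that it is profinite; pulling back its clopen finite-index subgroups yields the required intersection of definable subgroups.

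The first step is to verify that $K^*$ is definable, and this is where finite exponent first enters. Using (2), fix a finite $F_0 \subseteq G^*$ with $D_0^* \subseteq F_0 + D_1^*$. A direct computation from $D_1^* + D_1^* \subseteq D_0^*$ gives $D_0^* + D_0^* \subseteq (F_0 + F_0) + D_0^*$, and iterating yields that every $k$-fold sumset $kD_0^*$ is contained in $\langle F_0 \rangle + D_0^*$. Because $G$ has exponent $m$, the subgroup $\langle F_0 \rangle$ generated by the finite set $F_0$ is itself finite, so $K^* = \bigcup_k kD_0^* \subseteq \langle F_0 \rangle + D_0^*$; the reverse inclusion is routine (using $F_0 \subseteq D_0^* - D_1^* \subseteq K^*$). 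Hence $K^* = \langle F_0 \rangle + D_0^*$ is a finite union of translates of $D_0^*$ and therefore definable in $G^*$. Without finite exponent this step would fail, since $\langle F_0 \rangle$ could be infinite.

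The second step, which I expect to be the main obstacle, is bounding $[K^*:H]$. Applying (2) at every level, pick finite $F_n \subseteq G^*$ with $D_n^* \subseteq F_n + D_{n+1}^*$. Each $x \in D_0^*$ admits, by iteration, a decomposition $x = f_0(x) + f_1(x) + \cdots + f_{n-1}(x) + d_n(x)$ with $f_i(x) \in F_i$ and $d_n(x) \in D_n^*$, and these can be chosen coherently as $n$ grows by peeling off one layer at a time. If $x, y \in D_0^*$ give rise to the same infinite sequence $(f_0, f_1, \ldots)$ of representatives, then $x - y = d_n(x) - d_n(y) \in D_n^* + D_n^* \subseteq D_{n-1}^*$ for every $n \geq 1$, so $x - y \in H$. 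This embeds $D_0^*/H$ into $\prod_{n<\omega} F_n$, giving $|D_0^*/H| \leq \aleph_0^{\aleph_0} = 2^{\aleph_0}$, and therefore $|K^*/H| \leq |\langle F_0 \rangle| \cdot |D_0^*/H| \leq 2^{\aleph_0}$. Choosing the saturation cardinal $\kappa > 2^{\aleph_0}$ (as allowed by the monster-model conventions) yields $[K^*:H] < \kappa$, i.e., bounded index. The delicate point is ensuring that the choice functions $f_i(\cdot)$ are coherent so that the residues $d_n(\cdot)$ genuinely live in $D_n^*$; this is handled by a straightforward inductive construction.

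Once $H$ has bounded index in the definable subgroup $K^*$, the quotient $K^*/H$ is a compact Hausdorff abelian topological group in the logic topology. It has exponent $m$ because $G$ does, hence it is profinite by \cite[Theorem 8.20]{hewitt2013abstract}. Its open finite-index subgroups $U$ are clopen, so their preimages in $K^*$ are simultaneously $\bigwedge$- and $\bigvee$-definable; by compactness these preimages are definable in $K^*$, hence in $G^*$. As these preimages are subgroups of $K^*$ containing $H$ and their intersection is $H$, condition (3) is contradicted, completing the proof.
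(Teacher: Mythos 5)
Your argument is correct, and it follows the same broad architecture as the paper's proof (pass to the group generated by $D_0$, quotient by $\bigcap_{n}D_n$, use a structure theorem for torsion abelian groups to find clopen subgroups, and pull them back to definable subgroups contradicting (3)), but it differs in a genuinely useful way at the key topological step. The paper does not show that $\langle D_0\rangle$ is definable: it only replaces $(D_n)_n$ by a shift so that $D_0$ is an approximate subgroup, works with the $\bigvee$-definable group $\langle D_0\rangle$, obtains a \emph{locally} compact quotient via \cite[Lemma 7.5]{MR2373360}, and then has to invoke Armacost's theorem \cite[Theorem 3.5]{MR637201} that locally compact abelian torsion groups have a neighbourhood basis of clopen subgroups at the identity. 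You instead exploit the finite exponent (really just torsion plus abelian, so that $\langle F_0\rangle$ is finite) at the outset to show $\langle D_0^*\rangle=\langle F_0\rangle+D_0^*$ is a \emph{definable} subgroup; together with your explicit counting argument (coherent choice of representatives in $\prod_n F_n$) showing $[\langle D_0^*\rangle:H]\leq 2^{\aleph_0}$ — a bounded-index claim the paper only asserts — this makes the quotient compact, so the lighter fact that compact torsion groups are profinite (\cite[Theorem 8.20]{hewitt2013abstract}, already used elsewhere in the paper) suffices, and the preimages of the open subgroups are definable by the standard clopen/compactness argument, with intersection exactly $H$. The only points to spell out are the routine ones you flag: discard translates $f+D_1^*$ not meeting $D_0^*$ so that $F_0\subseteq D_0^*+D_1^*\subseteq\langle D_0^*\rangle$, and note $2^{\aleph_0}<\kappa$ since the saturation cardinal is a strong limit. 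So your proof buys a simpler topological input (compact rather than locally compact, Hewitt--Ross rather than Armacost) at the cost of an extra (easy) definability-of-$\langle D_0\rangle$ argument that genuinely uses the torsion hypothesis twice.
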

\begin{proof}
	Assume that there is such a sequence  $(D_n)_{n<\omega}$ of definable sets. Replacing $D_n$ by $D_{n+1}$ if necessary, we can assume that $D_0$ is an approximate subgroup (i.e. finitely many translates of $D_0$ cover $D_0+D_0$), because $D_1$ is an approximate subgroup by (1) and (2).
	We denote $D_0^{+n}:=D_0+\overset{n}{\dots}+D_0$. Then, $\langle D_0\rangle =\acup\limits_{n<\omega}D_0^{+n}$ is a $\bigvee$-definable group and, by (1), (2), and the assumption that $D_0$ is an approximate subgroup, we see that $\dcap_{k<\omega}D_k\leq \langle D_0\rangle$ is a $\bigwedge$-definable subgroup of bounded index. Hence, $$H:=\bigslant{\langle D_0\rangle}{\dcap_{k<\omega}D_k}$$ is a locally compact group with the logic topology (in which closed sets are defined as those whose preimages under the quotient map have type-definable intersections with all sets $D_0^{+n}$, $n<\omega$; see \cite[Lemma 7.5]{MR2373360}). 
Since $H$ is a torsion group, it follows from \cite[Theorem 3.5]{MR637201} that $H$ has a basis  $(H_i)_{i\in I}$ of neighbourhoods of the identity consisting of clopen subgroups. 
	Since each $\bigslant{D_n}{\dcap_{k<\omega}D_k}$ is a neighborhood of the identity, there is $H_n\subseteq \bigslant{D_n}{\dcap_{k<\omega}D_k}$ which is a clopen subgroup of $H$. Let $\pi: \langle D_0\rangle \to H$ be the quotient map. Then, $$\pi^{-1}[H_n]\subseteq D_n+\dcap_{k<\omega}D_k\subseteq D_n+D_n\subseteq D_{n-1}$$ is a $\bigwedge$-definable group. Since $\pi^{-1}[H_n]^c\cap D_{n-1}$ is also $\bigwedge$-definable, we deduce that $\pi^{-1}[H_n]$ is a definable group laying between $\dcap_{k<\omega} D_k$ and $D_{n-1}$. 
Since this is true for any $n>0$, we get a contradiction with $(3)$.
\end{proof}

The following corollary yields some hints on how an example of finite exponent could be constructed.

\begin{cor}
    	If $G$ is abelian of finite exponent, then the condition $G^{st,0}\neq G^{st}\neq G^{00}$ is equivalent to the existence of a sequence $(D_n)_{n<\omega}$ of definable sets such that:
    	\begin{enumerate}
		\item $D_n$ is symmetric and  $D_{n+1}+D_{n+1}\subseteq D_n$, for all $n<\omega$;
		\item $D_{n+1}$ is not generic in $D_n$ for all $n<\omega$;
		\item $\dcap_{n<\omega} D_n$ is not an intersection of definable groups;
		\item $[G: \dcap_{n<\omega} D_n]$ is unbounded;
		\item $\bigslant{G}{\dcap_{n<\omega} D_n}$ is stable.
	\end{enumerate}
\end{cor}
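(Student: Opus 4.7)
The plan is to prove the two implications separately. The direction $(\Leftarrow)$ will be immediate from Proposition \ref{4.1}: conditions (1), (3), (4), (5) of the corollary match conditions (1), (3), (2), (4) of Proposition \ref{4.1}, and the added condition (2) of the corollary plays no role here, so a direct appeal to Proposition \ref{4.1} yields $G^{st,0}\neq G^{st}\neq G^{00}$. Notice that this direction does not use the finite exponent hypothesis at all.

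For $(\Rightarrow)$, Proposition \ref{4.1} supplies a sequence $(D_n)_{n<\omega}$ meeting conditions (1), (3), (4), (5) of the corollary, with $H=\dcap_{n<\omega}D_n$. The task reduces to refining this sequence so that in addition $D_{n+1}$ is not generic in $D_n$ for every $n$. I would do this by extracting a subsequence greedily: set $n_0:=0$, and given $n_k$, look for $n_{k+1}>n_k$ such that $D_{n_{k+1}}$ is not generic in $D_{n_k}$. Any such subsequence $(D_{n_k})_{k<\omega}$ still has intersection $H$, hence still satisfies conditions (1), (3), (4), (5), and by construction now also (2).

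The crux is to show that the inductive step never gets stuck. Suppose towards a contradiction that at some stage $k$, $D_m$ is generic in $D_{n_k}$ for every $m>n_k$. Then I claim the tail sequence $(D_m)_{m\geq n_k}$ satisfies the three hypotheses of the previous proposition: condition (1) is inherited from $(D_n)$; condition (3) holds because $\dcap_{m\geq n_k}D_m=H$ is not an intersection of definable subgroups, by condition (3) of the corollary; and condition (2) of that proposition holds because, since $D_m\subseteq D_{n_k}$, any finite family of translates of $D_{m+1}$ covering $D_{n_k}$ automatically covers $D_m$, so $D_{m+1}$ is generic in $D_m$. Invoking the previous proposition, which applies exactly because $G$ is abelian of finite exponent, yields the required contradiction. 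The main obstacle was spotting this subsequencing argument and the observation that ``generic in $D_{n_k}$'' downgrades automatically to ``generic in each smaller $D_m$''; once this observation is in place, the rest is routine bookkeeping.
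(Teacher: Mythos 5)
Your proposal is correct and takes essentially the same route as the paper: apply Proposition \ref{4.1} for both directions and, for $(\Rightarrow)$, extract an infinite subsequence on which consecutive sets are non-generic, with the previous proposition ruling out the case where the extraction gets stuck. Your greedy subsequencing and the observation that genericity of $D_{m+1}$ in $D_{n_k}$ passes down to the smaller set $D_m$ are precisely the details the paper's terse proof leaves implicit.
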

\begin{proof}
From Proposition \ref{4.1}, we obtain that the condition $G^{st,0}\neq G^{st}\neq G^{00}$ is equivalent to the existence of a sequence  $(D_n)_{n<\omega}$ satisfying  $(1)$, $(3)$, $(4)$, and $(5)$. Furthermore, by the previous proposition, such a sequence $(D_n)_{n<\omega}$ must contain an (infinite) subsequence satisfying $(2)$. 
\end{proof}


\begin{remark}
This section could be naturally generalized to the context of a $\bigwedge$-definable group $G$. This would require checking a few things, mainly that Hrushovski's theorem (i.e. \cite[Ch. 1, Lemma 6.18]{pillay1996geometric}) is valid for a stable $\bigwedge$-definable group (not necessarily living in a stable theory). We leave it to the reader.
\end{remark}

\printbibliography
\nocite{*}

\end{document}